\newtheorem{theorem}{Theorem}[section]
\newtheorem{lemma}[theorem]{Lemma}
\newtheorem{corollary}[theorem]{Corollary}
\newtheorem{remark}[theorem]{Remark}
\newtheorem{definition}[theorem]{Definition}
\newtheorem{proposition}[theorem]{Proposition}
\begin{document}

\title{The existence of isolating blocks for multivalued semiflows}
\thanks{The first author has been supported by CAPES, FAPESP Grant\#2018/00065-9 and FAPESP Grant\#2020/00104-4.\\The second author has been partially supported by the Spanish Ministry of Science, Innovation and Universities, project PGC2018-096540-B-I00, by the Spanish Ministry of Science and Innovation, project PID2019-108654GB-I00, by the Junta de Andalucía and FEDER, project P18-FR-4509, and by the Generalitat Valenciana, project PROMETEO/2021/063.}



\author{Estefani M. Moreira}
\address{Departamento de Matem\'{a}tica, Instituto de Ci\^{e}ncias
              Ma\-te\-m\'{a}\-ti\-cas e de Computa\c{c}\~{a}o, Universidade de S\~{a}o
              Paulo, Campus de S\~{a}o Carlos, Caixa Postal 668, S\~{a}o Carlos, 13560-970, Brazil}
              \email{estefani@usp.br}           

\author{Jos\'{e} Valero}
\address{Centro de Investigaci\'on Operativa, Universidad Miguel Hern\'andez de Elche,
              Avenida Universidad s/n, 03202 Elche, Spain}
              \email{jvalero@umh.es}

\subjclass[2020]{37B30 \and 37B35 \and 35K55 \and 35B51}


\begin{abstract}
In this article, we show the existence of an isolating block, a special
neighborhood of an isolated invariant set, for multivalued semiflows acting on
metric spaces (not locally compact). Isolating blocks play an important role
in Conley's index theory for single-valued semiflows and are used to define
the concepts of homology index. Although Conley's index was generalized in the
context of multivalued (semi)flows, the approaches skip the traditional
construction made by Conley, and later, Rybakowski.

Our aim is to present a theory of isolating blocks for multivalued semiflows
in which we understand such a neighborhood of a weakly isolated invariant set
in the same way as we understand it for invariant sets in the single-valued scenario.

After that, we will apply this abstract result to a differential inclusion in
order to show that we can construct isolating blocks for each equilibrium of
the problem.

\keywords{Isolating block \and Multivalued semiflows \and  Differential inclusions \and Conley's index}
\end{abstract}

\maketitle

\section{Introduction}
\label{intro}

The last century was core of the development of the theory of nonlinear
dynamical systems. It was necessary to understand the asymptotic behavior of
solutions for problems coming from different areas such as Physics, Biology,
Chemistry and others. However, a model can be just a shadow of the real system
and not necessarily has to give a global and good representation for it. In
this context, the knowledge of some structures in the model that are stable
under perturbations, that is, structures that are also present in problems
which are sufficiently close, plays an essential role in understanding the dynamics.

In the study of nonlinear systems, we may find bounded or unbounded regions
for which the dynamics is self-contained, that is, it is not affected by the
dynamics outside. Such sets are called invariant regions. Isolated invariant
sets are invariant regions that are \textquotedblleft
isolated\textquotedblright\ in some sense. They are subjects of interest
because they are preserved under perturbations. Some special isolated
invariant sets are given by hyperbolic equilibria and hyperbolic periodic
solutions. Proving hyperbolicity for an equilibrium or a periodic solution can
be a very challenging subject, because it requires a good understanding of the
geometrical properties of trajectories in a neighborhood of such elements.

Thus, in the last century, the authors started to create a new approach of
seeking properties that are robust under perturbations. In this sense, Conley
offered us the concept of Conley's index, a way of studying a neighborhood of
isolated invariant sets from a topological point of view. This concept
generalizes the concept of Morse index, which can be calculated only when we
have a good understanding of the geometric local asymptotic behavior.

The concept of Conley's index appeared in \cite{Conley}, where it was defined
for compact isolated invariant sets under the action of semiflows defined on
locally compact metric spaces. Later, Rybakowski \cite{Rybakowski} generalized
the concept for semiflows defined on metric spaces which are not necessarily
compact. The importance of this topology definition can be measured by the
large amount of studies that followed the above references. Just to cite a few
of them, this concept was used in applications (see for instance,
\cite{Rybakowski,Mischaikow95}), it was also defined for flows on Hilbert
spaces (see \cite{GIP_CI_Hilb,BzGaRs_CI_Hilb,IzMaSta_CI_Hilb}), for
non-autonomous semiflows on Banach spaces \cite{Janig_NonAutCI} and also for
multivalued semiflows (see \cite{DzGg_CI_mult_Hilb,Mrozek90_CI}).

Multivalued problems appeared in the literature in the last century. They
appear in the scenario of equations for which we do not have uniqueness of
solutions of the Cauchy problem. The theory of attractors for such systems is
a very challenging subject and has been developed and applied to multiple
equations over the last years (see e.g. \cite{ARV06,Ball,DKP,MelnikValero98,ZKKVZ} and the references therein among many others).

In the theory of Conley's index developed in \cite{Rybakowski}\ the concept of
isolating block plays a fundamental role. This is a neighborhood of an
isolated invariant set of special kind, in which the boundaries are completely
oriented in some sense, characterizing in this way the stable and unstable
subsets. As far as we know, such construction has not be given yet in the
multivalued setting. In this paper we prove the existence of isolating blocks
for multivalued semiflows defined on metric spaces under rather general
assumptions. This is not a mere generalization, as there are many subtle
details that are quite different in the multivalued situation. After that, we
apply this result to a differential inclusion generated by reaction-diffusion
problems with discontinuous nonlinearities.

\section{Basic definitions}

Let $(X,d)$ be a metric space and denote $P(X)=\{B\subset X:B\neq\emptyset\}$,
while $C(\mathbb{R}^{+},X)$ is the set of all continuous functions from
$\mathbb{R}^{+}$ into $X$.

Consider a multivalued map $G:\mathbb{R}^{+}\times X \to P(X)$, that is, a
function that associates each $(t,x)\in\mathbb{R}^{+}\times X$ to the nonempty
subset $G(t,x)\subset X$.

\begin{definition}
	We say that $G$ is a \emph{multivalued semiflow} if:
	
	\begin{itemize}
		\item[i)] $G(0,x)=x$ for all $x \in X$;
		
		\item[ii)] $G(t+s,x)\subset G(t, G(s,x))$ for all $x \in X$ and $t,s \geq0$.
	\end{itemize}
\end{definition}

The multivalued semiflow is strict if, moreover, $G(t+s,x)=G(t, G(s,x))$ for
all $x \in X$ and $t,s \geq0$.

A function $\phi: \mathbb{R}\to X$ is called a complete trajectory of $G$
through $x \in X$, if $\phi(0)=x$ and $\phi(t+s) \in G(t, \phi(s))$, for all
$t,s \in\mathbb{R}$ with $t\geq0$.

We define $\mathcal{R}\subset C(\mathbb{R}^{+},X)$ to be a set of functions
that satisfy the following properties:

\begin{itemize}
	\item[(K1)] For any $x \in X$, we find $\phi\in\mathcal{R}$ such that
	$\phi(0)=x$;
	
	\item[(K2)] \emph{Translation property}: If $\phi\in\mathcal{R}$, then
	$\phi_{\tau}(\cdot)=\phi(\tau+ \cdot) \in\mathcal{R}$, for all $\tau
	\in\mathbb{R}^{+}$.
	
	\item[(K3)] \emph{Concatenation property}: Given any $\phi_{1}, \phi_{2}
	\in\mathcal{R}$ with $\phi_{1}(s)=\phi_{2}(0)$ for some $s\geq0$, the function
	$\phi\in C(\mathbb{R}^{+}, X)$ given by
	\[
	\phi(t)=%
	\begin{cases}
		\phi_{1}(t), \mbox{ if } t\in[0,s],\\
		\phi_{2}(t-s), \mbox{ if } t\in(s,+\infty),
	\end{cases}
	\]
	also belongs to $\mathcal{R}$.
	
	\item[(K4)] Let $\{\phi_{n}\}_{n \in\mathbb{N}}\subset\mathcal{R}$ be a
	sequence with $\phi_{n}(0)\rightarrow x \in X$. Then, we find $\phi
	\in\mathcal{R}$, $\phi(0)=x$ and such that $\phi_{n} \rightarrow\phi$
	uniformly on compacts of $\mathbb{R}^{+}$.
\end{itemize}

The functions from $\mathcal{R}$ generate the strict multivalued semiflow $G:
\mathbb{R}^{+}\times X\to P(X)$ given by
\[
G(t,x)=\{y \in X: y =\phi(t), \phi\in\mathcal{R}, \phi(0)=x\}.
\]
The functions $\phi\in\mathcal{R}$ are called solutions.

\begin{definition}
	A point $x \in X$ is a fixed point of $\mathcal{R}$, if $\phi\in\mathcal{R} $,
	where $\phi(t)=x$, for all $t\geq0$.
	
	A function $\phi:\mathbb{R}\to X$ is a complete trajectory of $\mathcal{R}$
	if, for any $\tau\in\mathbb{R}$, $\phi(\tau+ \cdot)\big|_{[0,+\infty)}%
	\in\mathcal{R}$.
\end{definition}

\begin{remark}
	Any complete trajectory of $\mathcal{R}$ is a complete trajectory of $G$. The
	converse is true when the trajectory is continuous, see \cite{KKV14}.
\end{remark}

When we are in the multivalued case, there are many ways of defining invariance.

\begin{definition}
	Consider a set $A\subset X$. We say that:
	
	\begin{enumerate}
		\item $A$ is \emph{invariant} if $G(t,A)=A$, for all $t\geq0$;
		
		\item $A$ is \emph{negatively (positively) invariant} if $G(t, A)\subset
		A\ (A\subset G(t,A))$, for all $t\geq0$.
		
		\item $A$ is \emph{weakly invariant} if, for all $x\in A$, we find a complete
		trajectory $\phi$ of $\mathcal{R}$ such that $\phi(0)=x$ and $\phi(t)\in A$,
		for all $t\in\mathbb{R}$.
		
		\item $A$ is \emph{weakly positively invariant} if for every $x\in A$ and
		$t\geq0$ it holds that $G(t,x)\cap A\neq\emptyset.$
		
		\item $A$ is \emph{weakly negatively invariant} if, for all $x\in A$, we find
		a complete trajectory $\phi$ of $\mathcal{R}$ such that $\phi(0)=x$ and
		$\phi(t)\in A$, for all $t\in\mathbb{R}^{-}$.
	\end{enumerate}
\end{definition}

The proof of the following propositions can be found in \cite{HenVal}.

\begin{proposition}
	\label{prop:pos.inv} Suppose that conditions {(K1)}-{(K4)} are verified. For a
	closed subset $A\subset X$, the following statements are equivalent:
	
	\begin{itemize}
		\item[i)] $A$ is weakly positively invariant;
		
		\item[ii)] For each $x \in A$, there is $\phi\in\mathcal{R}$ with $\phi(0)=x$
		and $\phi([0,+\infty))\subset A$.
	\end{itemize}
\end{proposition}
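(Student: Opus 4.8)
The plan is to prove the equivalence by showing the nontrivial implication i) $\Rightarrow$ ii), since ii) $\Rightarrow$ i) is immediate: if $\phi \in \mathcal{R}$ with $\phi(0)=x$ and $\phi([0,+\infty)) \subset A$, then for every $t \geq 0$ we have $\phi(t) \in G(t,x) \cap A$, so $A$ is weakly positively invariant. The substantive direction is constructing, from the pointwise condition "$G(t,x) \cap A \neq \emptyset$ for all $t \geq 0$", a single solution that stays in $A$ for all positive times.

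For i) $\Rightarrow$ ii), I would fix $x \in A$ and build the desired solution by a diagonal/compactness argument over a sequence of times tending to infinity. First, for each $n \in \mathbb{N}$ I would construct a solution $\psi_n \in \mathcal{R}$ with $\psi_n(0) = x$ such that $\psi_n(k) \in A$ for $k = 0, 1, \dots, n$: this is done inductively using weak positive invariance to pick, at each integer step, a point of $G(1, \psi_n(k)) \cap A$, choosing a solution realizing the jump from $\psi_n(k)$ to that point via (K1), translating it by (K2) so it starts at time $0$, and gluing it onto $\psi_n$ via the concatenation property (K3). (One must check that $G(1, \psi_n(k)) \cap A \neq \emptyset$ indeed reduces to the hypothesis applied at the point $\psi_n(k) \in A$, which it does since $t=1 \geq 0$.) This gives a sequence $\{\psi_n\} \subset \mathcal{R}$ with $\psi_n(0) = x$ for all $n$. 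By (K4) there is a subsequence and a limit $\phi \in \mathcal{R}$ with $\phi(0) = x$ and $\psi_n \to \phi$ uniformly on compact subsets of $\mathbb{R}^+$.

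It then remains to verify $\phi([0,+\infty)) \subset A$. Fix $t \geq 0$ and let $m = \lceil t \rceil$; for all $n \geq m$ the solution $\psi_n$ satisfies $\psi_n(k) \in A$ for every integer $k \le n$, but this only controls $\psi_n$ at integer times, so a direct passage to the limit gives $\phi(k) \in A$ for all integers $k$ (using that $A$ is closed and $\psi_n(k) \to \phi(k)$), not $\phi(t) \in A$ for non-integer $t$. To upgrade this, I would refine the construction: instead of integer steps, use steps of size $1/j$ at stage $j$, i.e. for each $n$ build $\psi_n$ with $\psi_n(i/n) \in A$ for $i = 0, 1, \dots, n^2$, so that the times at which membership in $A$ is guaranteed become dense in $[0, n]$ as $n \to \infty$. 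Then for any fixed $t \geq 0$ one can choose $t_n \to t$ with $\psi_n(t_n) \in A$ and $t_n$ in the grid for $\psi_n$; uniform convergence on compacts plus continuity of each $\psi_n$ and of $\phi$ gives $\psi_n(t_n) \to \phi(t)$, and closedness of $A$ yields $\phi(t) \in A$.

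The main obstacle is exactly this last point: the naive stepwise construction only pins the approximating solutions to $A$ at a discrete set of times, and one must arrange the grid of "controlled" times to become dense while keeping the concatenation and translation machinery (K2)–(K3) applicable at every step and keeping the resulting family amenable to the compactness principle (K4). Care is also needed to ensure the concatenations remain genuine elements of $\mathcal{R}$ (each glued piece must be a translate of a solution starting at the correct point), and that the limit solution inherits membership in $A$ on all of $[0,+\infty)$ and not merely on the closure of the approximating grids — which is where closedness of $A$ is used decisively.
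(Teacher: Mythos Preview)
Your argument is correct. The paper does not give its own proof of this proposition but refers to \cite{HenVal}, so there is no in-paper proof to compare against; your diagonal construction (concatenate solutions over a grid of mesh $1/n$ covering $[0,n]$, pass to a limit via (K4), and use closedness of $A$ along a sequence of grid times $t_n\to t$) is the standard way to establish the nontrivial implication and goes through as written. One minor wording point: when you say you choose a solution ``realizing the jump\dots via (K1)'', what you actually use is the \emph{definition} of $G$ (so that a point in $G(1/n,z)\cap A$ is witnessed by some $\phi\in\mathcal{R}$ with $\phi(0)=z$), not (K1) per se; and no translation via (K2) is needed for the concatenation, since (K3) already glues $\phi_1$ at time $s$ to a $\phi_2$ starting at time $0$.
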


\begin{proposition}
	\label{prop:neg.inv} Suppose that conditions {(K1)}-{(K4)} are verified. Let
	$A\subset X$ be a compact set which is negatively invariant. Then, for each
	$x\in A$, there is a complete trajectory $\phi$ of $\mathcal{R}$ with
	$\phi(0)=x$ and $\phi((-\infty,0])\subset A$.
\end{proposition}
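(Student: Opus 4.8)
The plan is to build the desired complete trajectory through $x$ by a diagonal/compactness argument, using negative invariance to push backwards step by step and then splicing the pieces together with the concatenation and limit properties (K3)--(K4).

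First I would fix $x \in A$ and, for each $n \in \mathbb{N}$, use negative invariance of $A$ to produce a "finite backward trajectory" of length $n$ lying in $A$. Concretely, since $G(1,A)\subset A$ and $A$ is invariant-from-the-right under the semiflow generated by $\mathcal{R}$, for each $n$ there should be a point $x_n \in A$ with $x \in G(n, x_n)$; unwinding the definition of $G$ via $\mathcal{R}$, this gives a solution $\psi_n \in \mathcal{R}$ with $\psi_n(0) = x_n$, $\psi_n(n) = x$, and — by repeatedly applying negative invariance at integer times together with the semiflow property (ii) — with $\psi_n([0,n]) \subset A$. Here one has to be slightly careful: $G(n,x_n)\subset G(1,G(n-1,x_n))\subset\cdots$, so intermediate images need not automatically lie in $A$, and one should instead construct $x_n$ inductively, choosing at each step a preimage in $A$ of the previously chosen point (possible precisely because $G(1,A)\subset A$ forces each point of $A$ to have, in fact, we need the reverse: each point is hit from $A$). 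This is the point that must be handled with care — see the obstacle paragraph below.

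Next, define $\phi_n : \mathbb{R} \to X$ by shifting: $\phi_n(t) = \psi_n(t+n)$ for $t \ge -n$, so that $\phi_n(0) = x$ and $\phi_n([-n,0]) \subset A$; extend $\phi_n$ to all of $\mathbb{R}$ arbitrarily inside $A$ (for instance constantly, or by prolonging forward using (K1) from $x$) so that $\phi_n(\tau+\cdot)|_{[0,\infty)} \in \mathcal{R}$ for every $\tau$ — this is where the translation (K2) and concatenation (K3) properties are used. Now I would apply (K4) together with a diagonal argument over the exhausting family of compact intervals $[-k,\infty)$: the sequence $\{\phi_n(-k+\cdot)\}_n \subset \mathcal{R}$ has initial values $\phi_n(-k) \in A$, and since $A$ is compact a subsequence of initial values converges in $A$; (K4) then yields a limit solution, and iterating over $k$ and diagonalizing produces a single function $\phi : \mathbb{R} \to X$ with $\phi(0) = x$, with $\phi(\tau+\cdot)|_{[0,\infty)} \in \mathcal{R}$ for all $\tau$ (hence a complete trajectory of $\mathcal{R}$), and with $\phi(t) \in A$ for all $t \le 0$ because $A$ is closed and each $\phi_n(t)$ for $t \in [-k,0]$ eventually lies in $A$.

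The main obstacle is the first step: fabricating, for every $n$, a solution in $\mathcal{R}$ that stays inside $A$ on the whole interval $[0,n]$ and ends at $x$. Negative invariance only gives $G(t,A)\subset A$, i.e. forward images stay in $A$, so to go backward I must know that $x$ is reached from within $A$; the natural inductive construction chooses $x_1 \in A$ with $x \in G(1,x_1)$, then $x_2 \in A$ with $x_1 \in G(1,x_2)$, and so on, but this requires that every point of $A$ has a preimage in $A$ — a consequence of negative invariance only if one also knows the semiflow is "onto $A$ from $A$," which is exactly where compactness of $A$ (and property (K4), giving a backward limit solution as in the statement being proved — so one must avoid circularity) enters. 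I would resolve this by proving the integer-step preimage claim directly: for fixed $y \in A$, the sets $G(1,z)\ni y$ with $z \in A$ form a nonempty family by a separate compactness argument on $\{\psi \in \mathcal{R} : \psi(1) = y\}$ combined with negative invariance, or alternatively cite the analogous backward-continuation lemma already available for such $\mathcal{R}$; then the concatenation (K3) glues the unit-length pieces into $\psi_n$, and the intermediate-image subtlety disappears because each piece individually lies in $A$.
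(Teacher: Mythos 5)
The paper itself contains no proof of this proposition --- it is imported verbatim from \cite{HenVal} --- so your argument has to stand on its own, and it does not: the failure is exactly at the point you flagged as the ``main obstacle.'' If one reads the paper's definition of negative invariance literally, $G(t,A)\subset A$, then the preimage claim you hope to establish ``by a separate compactness argument'' is simply false, and so is the proposition itself. Take the scalar flow of $\dot x=-x$ on $X=\mathbb{R}$ (its solution set satisfies (K1)--(K4)) and $A=[-1,1]$: then $G(t,A)=[-e^{-t},e^{-t}]\subset A$ and $A$ is compact, yet the only complete trajectory through $x=1$ is $t\mapsto e^{-t}$, which leaves $A$ for every $t<0$. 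No compactness of $A$ or of $\{\psi\in\mathcal{R}:\psi(1)=y\}$ can produce a preimage of $1$ inside $A$, because none exists. The statement is only true under the opposite (and standard Melnik--Valero) reading, $A\subset G(t,A)$ for all $t\ge 0$, i.e.\ every point of $A$ is reached from $A$ at every earlier time; the parenthetical in the paper's definition of negatively/positively invariant is evidently transposed. You correctly sensed the tension but resolved it in the wrong direction, by trying to prove a false auxiliary claim rather than by correcting the hypothesis.

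Once the hypothesis is read as $A\subset G(t,A)$, the backward points $x_{-1},x_{-2},\dots\in A$ exist by definition, but then the other half of your argument breaks: your closing remark that ``the intermediate-image subtlety disappears because each piece individually lies in $A$'' is only automatic under the (false) reading $G(t,A)\subset A$. Under $A\subset G(t,A)$ each unit step gives a solution joining two points of $A$ whose interior values may well leave $A$. The standard repair --- and the role of Lemma 5 of \cite{HenVal}, which this paper invokes again in the proof of item (b1) of Proposition \ref{RybTheo4.5} --- is to shrink the time step: build backward chains with nodes in $A$ at times $-k/m$, so that as $m\to\infty$ the times at which the chain is known to lie in $A$ become dense in $[-n,0]$, and then pass to the limit using (K4), compactness of $A$ and a diagonal extraction; continuity of the limit solution together with closedness of $A$ forces the entire limit segment into $A$. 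Your outer scaffolding (shifts via (K2), concatenation via (K3), limits via (K4), diagonalization over $[-k,0]$, closedness of $A$) is the right skeleton and matches the style of argument used in Proposition \ref{RybTheo4.5}(b1), but as written the proof has a genuine hole at the one step that carries all of the content.
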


Consider $B\subset X$ and $\phi\in\mathcal{R}$. We define the $\omega$-limit
set of $B$ as
\[
\begin{aligned} \omega(B)= \{y \in X: \exists \{t_n\}_{n \in \mathbb{N}}\subset \mathbb{R}^+, \ t_n\rightarrow +\infty \mbox{ and } \{y_n\}_{n \in \mathbb{N}}\subset X,\\ y_n \in G(t_n,B) \mbox{ and } y_n \rightarrow y\} \end{aligned}
\]
and the $\omega$-limit set of $\phi$ as
\[
\omega(\phi)=\{y\in X:\exists\{t_{n}\}_{n\in\mathbb{N}}\subset\mathbb{R}%
^{+},\ t_{n}\rightarrow+\infty\mbox{ and }\phi(t_{n})\rightarrow y\}.
\]

\begin{definition}
	A closed subset $A\subset X$ is called an {isolated weakly invariant} set if
	$A$ is a weakly invariant set and we find an open neighborhood $U\subset X$ of
	$A$, such that $A$ is the maximal weakly invariant set in $U$.
\end{definition}

Assume {(K1)-(K4)} and let $K$ be a closed, isolated and weakly invariant set.
Let $\mathcal{O}(K)$ be an open neighborhood of $K$. For any $\phi
\in\mathcal{R},\phi(0)\in\mathcal{O}(K)$, denote
\[
t_{\phi}=\sup\{t:\phi([0,t])\subset\mathcal{O}(K)\}.
\]

For any $\phi\in\mathcal{R}$, $\phi(0)=x \in\mathcal{O}(K)$ and sequence
$x_{n} \rightarrow x$, $\phi_{n} \in\mathcal{R}$, $\phi_{n}(0)=x_{n}
\in\mathcal{O}(K)$, the convergence $\phi_{n} \rightarrow\phi$ means that
\[
\phi_{n}(s)\rightarrow\phi(s) \mbox{ uniformly on } \lbrack0,t] \mbox{ for }
t<t_{\phi}.
\]

For what we are going to do, we need to ask an additional assumption for
$\mathcal{R}$:

\begin{itemize}
	\item[(K5)] There exists an open neighborhood $\mathcal{O}(K)$ such that, for
	any $x\in\mathcal{O}(K)$ and $\phi\in\mathcal{R}$, with $\phi(0)=x$ and any
	sequence $x_{n}\rightarrow x$ there exists a subsequence $\phi_{n_{k}}%
	\in\mathcal{R}$, $\phi_{n_{k}}(0)=x_{n_{k}}$ such that $\phi_{n_{k}%
	}\rightarrow\phi$ uniformly on compacts sets of $[0,t_{\phi})$.
\end{itemize}

Given a set $V\subset X$, define the sets $\partial V$, $clV$ and $intV$ as,
respectively, the boundary of $V$, the closure of $V$ and the interior of $V$.
To be more precise:
\[
\begin{aligned} &int V = \{ x \in V: \mbox{ there is an open subset }U\subset X \mbox{ with } x \in U\subset V\},\\ &cl V=\{y \in X: \mbox{ for all open subset } U\subset X, \mbox{ with } y \in U, U\cap V\neq \emptyset\},\\ &\partial V = clV \cap cl(X\setminus V). \end{aligned}
\]

\begin{definition}
	Given a closed isolated invariant set $A\subset X$, we say that a closed set
	$N\subset X$ is a related isolating neighborhood if $A\subset int(N)$ (the
	interior of $N$) and $A$ is the maximal isolated weakly invariant set in $N$.
\end{definition}

\begin{definition}
	\label{def:point_eiboff} Let $B\subset X$ be a closed set and $x\in\partial B$
	be a boundary point. We have the following definitions.
	
	\begin{enumerate}
		\item $x$ is an \emph{egress point} if for every $\sigma:[-\delta
			_{1},+\infty)\rightarrow X$, $\sigma_{\delta_{1}}:=\sigma(-\delta_{1}%
			+$\textperiodcentered$)\in\mathcal{R}$, $x=\sigma(0)$, with $\delta_{1}\geq0$,
			the following hold: \subitem There is $\varepsilon_{2}>0$ such that
		$\sigma(t)\notin B$, for $t\in(0,\varepsilon_{2}]$; \subitem If $\delta
		_{1}>0$ then, for some $\varepsilon_{1}\in(0,\delta_{1})$, $\sigma(t)\in B$
		for $t\in\lbrack-\varepsilon_{1},0)$.
		
		\noindent The set of egress points of $B$ is denoted by $B^{e}$.
		
		\item $x$ is an \emph{ingress point} if for every  $\sigma:[-\delta
			_{1},+\infty)\rightarrow X$, $\sigma_{\delta_{1}}\in\mathcal{R}$,
			$x=\sigma(0)$, with $\delta_{1}\geq0$, the following properties hold: \subitem
			There is $\varepsilon_{2}>0$ such that $\sigma(t)\in int(B)$, for
			$t\in(0,\varepsilon_{2}]$; \subitem If $\delta_{1}>0$ then for some
		$\varepsilon_{1}\in(0,\delta_{1})$, $\sigma(t)\notin B$, for $t\in
		\lbrack-\varepsilon_{1},0)$.
		
		\noindent The set of ingress points of $B$ is denoted by $B^{i}$.
		
		\item $x$ is an \emph{bounce-off point} if for every $\sigma:[-\delta
			_{1},+\infty)\rightarrow X$, $\sigma_{\delta_{1}}\in\mathcal{R}$,
			$x=\sigma(0)$, with $\delta_{1}\geq0$, the following properties hold: \subitem
			There is $\varepsilon_{2}>0$ such that $\sigma(t)\notin B$, for
			$t\in(0,\varepsilon_{2}]$; \subitem If $\delta_{1}>0$ then for some
		$\varepsilon_{1}\in(0,\delta_{1})$, $\sigma(t)\notin B$, for $t\in
		\lbrack-\varepsilon_{1},0)$.
		
		\noindent The set of bounce-off points of $B$ is denoted by $B^{b}$.
	\end{enumerate}
\end{definition}

We usually denote $B^{-}=B^{e}\cup B^{b}$ and we call it the exit set of $B$.

\begin{definition}
	Let $K$ be a closed isolated weakly invariant set. An isolating block of $K$
	is a closed set $B\subset X$ which is an isolating neighborhood of $K$ with
	$\partial B=B^{-}\cup B^{i}$ and such that $B^{-}$ is closed.
\end{definition}

Consider the multivalued semiflows $\{G_{n}(t):t\geq0\}$, $n\in\mathbb{N}$,
and $\{G(t):t\geq0\}$, which are generated by the respective sets
$\mathcal{R}_{n}$, $n\in\mathbb{N}$, and $\mathcal{R}$. Assume that $K$ is a
closed set of $X$, which is also a{n }isolated weakly invariant set for the
semiflow $G$. In this situation, we may impose the additional assumptions:

\begin{itemize}
	\item[(KK4)] Let $\{\phi_{n}\}_{n\in\mathbb{N}}$ be a sequence with $\phi
	_{n}\in\mathcal{R}_{n}$, $n\in\mathbb{N}$, and $\phi_{n}(0)\rightarrow x$, for
	some $x\in X$. Then, we find $\phi\in\mathcal{R}$, $\phi(0)=x$ such that
	$\phi_{n}\rightarrow\phi$ uniformly on compacts of $\mathbb{R}^{+}$.
	
	\item[(KK5)] There exists an open neighborhood $\mathcal{O}(K)$ such that, for
	any $x\in\mathcal{O}(K)$ and $\phi\in\mathcal{R}$, with $\phi(0)=x$ and any
	sequence $x_{n}\rightarrow x$ there exists a subsequence $\phi_{n_{k}}%
	\in\mathcal{R}_{n_{k}}$, $\phi_{n_{k}}(0)=x_{n_{k}}$ such that $\phi_{n_{k}%
	}\rightarrow\phi$ uniformly on compacts sets of $[0,t_{\phi})$.
\end{itemize}

Observe that if we consider $\mathcal{R}_{n}\equiv\mathcal{R}$, $n\in
\mathbb{N}$, satisfying $(KK4)$ (resp. $(KK5)$), then $\mathcal{R}$ satisfies
$(K4)$ (resp. $(K5)$).

\begin{definition}
	A closed set $N\subset X$ is called $\{G_{n}\}-$admissible if, given sequences
	$\{x_{n}\}_{n\in\mathbb{N}}$, $\{t_{n}\}_{n\in\mathbb{N}}\in\mathbb{R}^{+}$,
	where $t_{n}\rightarrow+\infty$, and $\{\phi_{n}\}_{n\in\mathbb{N}}$ for which
	$\phi_{n}\in\mathcal{R}_{n}$, $\phi_{n}(0)=x_{n}$, $\phi_{n}([0,t_{n}])\subset
	N$, $n\in\mathbb{N}$, we have that $\{\phi_{n}(t_{n})\}_{n\in\mathbb{N}}$ has
	a convergent subsequence.
	
	We say that $N\subset X$ is $G$-admissible if it is $\{G_{n}\}-$admissible,
	where $G_{n}=G$, for all $n\in\mathbb{N}$.
\end{definition}

\begin{remark}
	We can change the hypothesis of admissibility of the semiflows by the
	collectively asymptotic compactness, if we restrict the analysis to bounded
	isolating neighborhoods $N$. In fact, collectively asymptotic compactness is
	stronger than admissibility.
	
	We recall that $\{G_{n}(t):t\geq0\}_{n\in\mathbb{N}}$ is collectively
	asymptotic compact if, given any sequences $\{t_{n}\}_{n\in\mathbb{N}}$ and
	$\phi_{n}\in\mathcal{R}_{n}$, such that $\{\phi_{n}(0)\}_{n\in\mathbb{N}}$ is
	bounded and $t_{n}\rightarrow+\infty$, we have that $\{\phi_{n}(t_{n}%
	)\}_{n\in\mathbb{N}}$ has a convergent subsequence.
\end{remark}

\section{Existence of the isolating block in the multivalued case}

\label{Block}

In the multivalued case, there is the possibility of having more than one
solution passing through a point and, therefore, we need to adapt the results
related to the existence of the isolating block, which are not directly applicable.

In this section, we will follow the construction made by Rybakowski in
\cite{Rybakowski}, with the necessary adaptations, in order to construct the
isolating block. This means that, although we do not have uniqueness of
solutions, we are still able to construct a closed neighborhood for which its
boundary describes the entry and exit directions.

{Consider $K$ to be a closed isolated }weakly invariant set{ and $N$ to be a
	closed isolating neighborhood of $K$. Denote $U=int(N)$ and define the sets
	$\mathcal{U}=\{\phi\in\mathcal{R}:\phi(0)\in U\}$ and $\mathcal{N}=\{\phi
	\in\mathcal{R}:\phi(0)\in N\}.$ Observe that $\mathcal{U}\subset\mathcal{N}$.}

For the multivalued semiflow $G$ generated by $\mathcal{R}$, we denote by
{$A_{G}^{+}(N)$ the set of points $y\in N$ for which we find a $\phi
	\in\mathcal{R}$ such that $\phi([0,+\infty))\subset N$ and $\phi(0)=y$.} Also,
denote by $A_{G}^{-}(N)$ the set of points $y\in N$ for which we find a
complete trajectory $\phi$ of $\mathcal{R}$ such that $\phi((-\infty
,0])\subset N$ and $\phi(0)=y$. Obviously, $K\subset${$A_{G}^{+}(N)\cap
	A_{G}^{-}(N)$. Using }$\left(  K3\right)  $ we obtain that the converse is
also true, so $K=${$A_{G}^{+}(N)\cap A_{G}^{-}(N).$}

The following result is analogous to Theorem 4.5 in \cite{Rybakowski}.

\begin{proposition}
	\label{RybTheo4.5} Let $N \subset X$ be closed and $G,G_{n}$ be multivalued
	semiflows, $n \in\mathbb{N}$. Denote by $\mathcal{R}_{n}$ the set of functions
	related to $G_{n}$, $n \in\mathbb{N}$, and by $\mathcal{R}$ the set of
	functions related to $G$, that satisfy the properties {(K1)-(K4)} and
	collectively satisfy (KK4).
	
	Consider $x \in X$ and $\{x_{n}\}_{n \in\mathbb{N}}\in X$ with $x_{n}
	\rightarrow x$ as $n \rightarrow+\infty$. Suppose that, for each $n
	\in\mathbb{N}$, we find $\phi_{n} \in\mathcal{R}_{n}$ and $t_{n} \in
	\mathbb{R}^{+}$ such that $\phi_{n}(0)=x_{n}$ and $\phi_{n}([0,t_{n}])\subset
	N$. It follows:
	
	\begin{itemize}
		\item[(a1)] If $t_{n}\rightarrow+\infty$, then we find a
			subsequence of $\{\phi_{n}\}$ and $\phi\in\mathcal{R}$ such that $\phi(0)=x,$
			$\phi(t)\in N$, for all $t\in\mathbb{R}^{+}$, and $\phi_{n}\rightarrow\phi$
			uniformly on compact sets of $\mathbb{R}^{+}$.
		
		\item[(a2)] If $t_{n}\rightarrow t_{0}$, for some $t_{0}\in\mathbb{R}^{+}$,
		then we find a subsequence of $\{\phi_{n}\}$ and $\phi
			\in\mathcal{R}$ such that $\phi(0)=x,$ $\phi([0,t_{0}])\subset N$ and
			$\phi_{n}\rightarrow\phi$ uniformly on compact sets of $\mathbb{R}^{+}$.
	\end{itemize}
	
	Assume that $N$ is $\{G_{n_{m}}\}-$admissible for every subsequence of
	$\{G_{n}\}_{n\in\mathbb{N}}$. Then,
	
	\begin{enumerate}
		\item[(b1)] Every limit point of $\{\phi^{(n)}(t_{n})\}_{n \in\mathbb{N}}$
		belongs to $A^{-}_{G}(N)$.
		
		\item[(b2)] Denote by $K_{n}$ the largest weakly invariant set
			for $G_{n}$ in $N$, $n\in\mathbb{N}$. If $W\subset N$ with $K\subset intW$,
		then $K_{n}\subset intW$ for $n$ sufficiently large. 
		
		\item[(b3)] If $N$ is $G$-admissible, the sets $K$ and $A_{G}^{-}(N)$ are compact.
	\end{enumerate}
\end{proposition}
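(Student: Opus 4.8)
My plan is to reduce the whole proposition to two tools guaranteed by hypothesis — the collective convergence property (KK4), to extract limit solutions, and admissibility of $N$ for every subsequence of $\{G_n\}$, to gain precompactness at forward endpoints — together with the already established identity $K=A_G^+(N)\cap A_G^-(N)$; I note at the outset that although the proposition carries the standing assumption $x_n\to x$, the arguments for (b1)--(b3) will not actually use it, which is what lets me recycle the (b1) construction inside (b2) and (b3). Parts (a1)--(a2) should fall out of (KK4) and the closedness of $N$ alone: applying (KK4) to $\{\phi_n\}$ (with $\phi_n(0)=x_n\to x$) gives, after a subsequence, $\phi\in\mathcal R$ with $\phi(0)=x$ and $\phi_n\to\phi$ uniformly on compacts of $\mathbb R^+$; then in (a1), for fixed $T>0$ one has $\phi_n([0,T])\subset\phi_n([0,t_n])\subset N$ for large $n$ (since $t_n\to+\infty$), so $\phi([0,T])\subset N$ in the limit, hence $\phi(\mathbb R^+)\subset N$ as $T$ was arbitrary; and in (a2) one uses $\phi_n(t_n)\to\phi(t_0)$ (uniform convergence near $t_0$ plus continuity of $\phi$) together with $\phi_n(t_n)\in N$, and the analogous statement for each $t<t_0$, to get $\phi([0,t_0])\subset N$.

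The core of the argument is (b1), which I would prove (in the relevant case $t_n\to+\infty$) by a diagonal construction of a complete trajectory of $\mathcal R$ through a given limit point $y=\lim\phi_n(t_n)$ (along a subsequence) that remains in $N$ on $(-\infty,0]$. For each integer $k\ge1$ and each $n$ with $t_n\ge k$, set $\theta_n^k:=\phi_n(t_n-k+\cdot)\big|_{[0,\infty)}\in\mathcal R_n$ (by the translation property (K2)); then $\theta_n^k(0)=\phi_n(t_n-k)$ and $\theta_n^k([0,k])=\phi_n([t_n-k,t_n])\subset N$. Since $\phi_n([0,t_n-k])\subset N$ with $t_n-k\to+\infty$, admissibility — and this is exactly where I need it on every subsequence, because I keep refining — gives that $\{\phi_n(t_n-k)\}_n$ has a convergent subsequence; diagonalizing over $k$ produces one subsequence along which $\phi_n(t_n-k)\to z_k$ for all $k\ge0$, with $z_0=y$. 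Feeding each $\theta_n^k$ into (KK4) gives $\eta_k\in\mathcal R$ with $\eta_k(0)=z_k$ and $\theta_n^k\to\eta_k$ uniformly on compacts; the identity $\theta_n^{k+1}(1+\cdot)=\theta_n^k$ passes to the limit as $\eta_{k+1}(1+s)=\eta_k(s)$, so $\psi(t):=\eta_k(t+k)$ (any $k\ge-t$) is a well-defined map on $\mathbb R$ with $\psi(0)=y$, is a complete trajectory of $\mathcal R$ (for $\tau\in\mathbb R$ pick $k\ge-\tau$ and apply (K2) to $\eta_k(\tau+k+\cdot)$), and for $t\le0$ satisfies $\psi(t)=\lim_n\phi_n(t_n+t)\in N$ because $\phi_n(t_n+t)\in\phi_n([0,t_n])\subset N$ for large $n$ and $N$ is closed; hence $y\in A_G^-(N)$. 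I expect the main obstacles to be precisely (i) getting convergence of the endpoint values $\phi_n(t_n-k)$ — which is what forces the ``admissible for every subsequence'' hypothesis — and (ii) verifying the compatibility relations carefully enough that the pieces $\eta_k$ glue into an honest member of the complete-trajectory family of $\mathcal R$.

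Parts (b2)--(b3) I would then deduce from (b1). For (b2), argue by contradiction: if $K\subset\operatorname{int}W$ but $K_n\not\subset\operatorname{int}W$ along a subsequence, choose $x_n\in K_n\setminus\operatorname{int}W$ with complete trajectories $\phi_n$ of $\mathcal R_n$ through $x_n$ inside $K_n\subset N$; applying admissibility to the translates $s\mapsto\phi_n(s-n)$ (which lie in $N$ on $[0,n]$ and equal $x_n$ at time $n\to+\infty$) gives $x_n\to x^*\in N\setminus\operatorname{int}W$ along a subsequence, so $x^*\notin K$. On the other hand (KK4) applied to $\phi_n\big|_{[0,\infty)}$ puts $x^*\in A_G^+(N)$, and the diagonal construction of (b1) applied to the translates $\phi_n(-k+\cdot)\big|_{[0,\infty)}$ (whose initial values again converge along a subsequence by admissibility, exactly as in (b1)) puts $x^*\in A_G^-(N)$; since $K=A_G^+(N)\cap A_G^-(N)$ this forces $x^*\in K$, a contradiction. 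For (b3): $K$ is closed by hypothesis and $K\subset A_G^-(N)$ (weak invariance supplies, for each point of $K$, a complete trajectory through it lying in $K\subset N$), so it suffices to show $A_G^-(N)$ is compact when $N$ is $G$-admissible; given $\{y_m\}\subset A_G^-(N)$ with complete trajectories $\phi_m$ of $\mathcal R$ through $y_m$ in $N$ on $(-\infty,0]$, the translates $s\mapsto\phi_m(s-m)$ lie in $N$ on $[0,m]$ and equal $y_m$ at time $m\to+\infty$, so $G$-admissibility extracts a convergent subsequence of $\{y_m\}$ whose limit lies in $A_G^-(N)$ by the construction used in (b1); hence $A_G^-(N)$, and therefore $K$, is sequentially compact, hence compact.
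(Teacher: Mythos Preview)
Your proposal is correct and follows essentially the same approach as the paper: (KK4) plus closedness of $N$ for (a1)--(a2), a diagonal construction of a backward trajectory for (b1), contradiction via $K=A_G^+(N)\cap A_G^-(N)$ for (b2), and admissibility plus the (b1) construction for (b3). The only noteworthy difference is in (b3): you invoke ``$K$ closed by hypothesis'' and conclude compactness from $K\subset A_G^-(N)$, whereas the paper does not assume closedness of $K$ at that point and instead shows directly that any sequence in $K$ has a subsequential limit in $A_G^+(N)\cap A_G^-(N)=K$ --- a minor variation that makes the paper's argument slightly more self-contained.
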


\begin{proof}
	We prove each statement separately.
	
	\begin{itemize}
		\item[(a1)] By (KK4), we find a $\phi\in\mathcal{R}$ such that $\phi
		_{n}\rightarrow\phi$ uniformly on compact sets of $\mathbb{R}^{+}$. By
		contradiction, suppose that for some $T>0$, $\phi(T)\notin N$. Then we would
		find a $\varepsilon>0$ with $\mathcal{O}_{\varepsilon}(\phi(T))\subset
		X\setminus N$, and $n_{0}\in\mathbb{N}$ sufficiently large such that $\phi
		_{n}(T)\in\mathcal{O}_{\varepsilon}(\phi(T))$, for all $n\geq n_{0}$.
		Consequently, $t_{n}<T$, for all $n\geq n_{0}$. This is a contradiction, since
		$t_{n}\rightarrow+\infty$ as $n$ goes to $+\infty$. Therefore, for all
		$t\in\mathbb{R}^{+}$, we have $\phi(t)\in N$.
		
		\item[(a2)] By (KK4), there is a $\phi\in\mathcal{R}$ for which $\phi
		_{n}\rightarrow\phi$ uniformly on compact sets of $\mathbb{R}^{+}$. For any
		$\varepsilon\in(0,t_{0})$, repeating the same argument above taking
		$T=t_{0}-\varepsilon$, it follows that $\phi([0,t_{0}-\varepsilon])\subset N$.
		Since $N$ is closed, we conclude that $\phi([0,t_{0}])\subset N$.
		
		\item[(b1)] By the admissibility of $N$, we find a subsequence $\{\phi
		_{n}^{(0)}(t_{n}^{(0)})\}_{n\in\mathbb{N}}\subset\{\phi_{n}(t_{n}%
		)\}_{n\in\mathbb{N}}$ such that $\phi_{n}^{(0)}(t_{n}^{(0)})\rightarrow x$ as
		$n\rightarrow+\infty$. We can apply the same arguments for the sequence
		$\{\phi_{n}^{(0)}(t_{n}^{(0)}-1)\}_{n\geq n_{0}}$, where $n_{0}\in\mathbb{N}$
		is such that $t_{n}^{(0)}\geq1$, for $n\geq n_{0}$. Then, we extract a
		subsequence $\{\phi_{n}^{(1)}(t_{n}^{(1)}-1)\}_{n\geq n_{0}}\subset\{\phi
		_{n}^{(0)}(t_{n}^{(0)}-1)\}_{n\geq n_{0}}$ such that $\phi_{n}^{(1)}%
		(t_{n}^{(1)}-1)\rightarrow y_{1}\in N$. We construct a function $\psi
		^{(1)}:[-1,+\infty)\rightarrow X$ with $\psi^{(1)}(-1)=y_{1}$ and $\psi
		^{(1)}(0)=x$, $\psi^{(1)}(\cdot-1)\in\mathcal{R}$ and $\psi^{(1)}%
		([-1,0])\subset N$ using the same construction applied in the proof of Lemma 5
		from \cite{HenVal}.
		
		Applying this argument recursively, for $k\in\mathbb{N}$, we find a
		subsequence
		\[
		\left\{  \phi_{n}^{(k+1)}(t_{n}^{(k+1)}-k-1)\right\}  _{n\in\mathbb{N}}%
		\subset\left\{  \phi_{n}^{(k)}(t_{n}^{(k)}-k-1)\right\}  _{n\in\mathbb{N}}%
		\]
		such that $\phi_{n}^{(k+1)}(t_{n}^{(k+1)}-k-1)\rightarrow y_{k+1}$ and we use
		the construction applied in Lemma 5, \cite{HenVal}, to construct a connection
		between $y_{k+1}$ and $y_{k}$ and concatenation to construct $\psi
		^{(k+1)}:[-k-1,+\infty)\rightarrow X$, with $\psi^{k+1}(\cdot-k-1)\in
		\mathcal{R}$ such that $\psi^{(k+1)}([-k-1,0])\subset N$, $\psi^{(k+1)}%
		(-k-1)=y_{k+1}$ and $\psi^{(k+1)}(0)=x$. Moreover, $\psi^{k+1}(t)=\psi^{k}%
		(t)$, for $t\geq-k$.
		
		Then we can define $\psi:\mathbb{R}\rightarrow X$ with $\psi(t)=\psi^{(k)}(t)$
		if $t\geq-k$. By construction we have that $\psi$ is well-defined, $\psi(0)=x$
		and $\psi(t)\in N$ for all $t\in(-\infty,0]$. Also, $\psi$ is a complete
		trajectory of $\mathcal{R}$. Therefore, we conclude that $x\in A^{-}(N)$, as desired.
		
		\item[(b2)] Suppose that we cannot find $n_{0}\in\mathbb{N}$ in those
		conditions. Then, we can assume, w.l.g., that we find $x_{n}\in K_{n}\cap
		N\setminus intW$, for all $n\in\mathbb{N}$. By definition of $K_{n}$
		there exists a complete trajectory $\phi_{n}:\mathbb{R}\rightarrow
			X$ of $\mathcal{R}_{n}$ with $\phi_{n}(0)=x_{n}$ and $\phi_{n}(\mathbb{R}%
			)\subset N$. Consider the sequence $\{t_{n}\}_{n\in\mathbb{N}}\subset
			\mathbb{R}^{+}$ such that $t_{n}\rightarrow+\infty$. Define $\psi
		_{n}:\mathbb{R}^{+}\rightarrow X$ given by $\psi_{n}(t)=\phi_{n}(t-t_{n})$,
		for all $t\geq0$. {B}y construction, $\psi_{n}([0,t_{n}])\subset N$ and, by
		the admissibility, we find that $\{\psi_{n}(t_{n})\}_{n\in\mathbb{N}}%
		=\{x_{n}\}_{n\in\mathbb{N}}$ has a convergent subsequence to a point $x_{0}\in
		A_{G}^{-}(N)$, by (b1).
		
		By (KK4), since $x_{n}=\phi_{n}(0)\rightarrow x_{0}$, we may assume that we
		find a $\phi\in\mathcal{R}$ such that $\phi(0)=x_{0}$ and $\phi_{n}%
		(t)\rightarrow\phi(t)$ for all $t\geq0$. Now, since $\phi_{n}$ converges to
		$\phi$ and $N$ is closed we easily obtain that $\phi(t)\in N$ for all $t\geq0$.
		
		Hence, $x_{0}\in A_{G}^{-}(N)\cap A_{G}^{+}(N)=K$. But, at the same time,
		$x_{n}\rightarrow x_{0}$, $x_{0}\in cl(N\setminus intW)=N\setminus intW$,
		which is a contradiction, since $K\subset intW$. Therefore, $K_{n}\subset
		intW$, for $n$ sufficiently large.
		
		\item[(b3)] We want to show now that if $N$ is $G$-admissible, then $A_{G}%
		^{-}(N)$ and $K$ are compact.
		
		Consider $\{x_{n}\}_{n\in\mathbb{N}}\in A_{G}^{-}(N)$. Take a sequence
		$\{t_{n}\}_{n\in\mathbb{N}}\in\mathbb{R}^{+}$ with $t_{n}\rightarrow+\infty$
		as $n$ goes to $+\infty$. By definition of $A_{G}^{-}(N)$, for each
		$n\in\mathbb{N}$, we find a complete trajectory $\psi_{n}$ in $\mathcal{R}$
		with $\psi_{n}((-\infty,0])\subset N$ and $\psi_{n}(0)=x_{n}$. For each
		$n\in\mathbb{N}$, denote $\xi_{n}:[0,+\infty)\rightarrow X$ as $\xi_{n}%
		(\cdot)=\phi_{n}\big|_{[0,+\infty)}(\cdot-t_{n})\in\mathcal{R}$. Then we have
		that $\xi_{n}(t_{n})=x_{n}$ and $\xi_{n}([0,t_{n}])\subset N$, for all
		$n\in\mathbb{N}$. The compactness of $A_{G}^{-}(N)$ will follow
			by the admissibility and applying item (b1) to the sequence $\{\xi
		(t_{n})\}_{n\in\mathbb{N}}$ in the particular case $G_{n}\equiv G$,
		$n\in\mathbb{N}$.
		
		Suppose now that $\{x_{n}\}_{n\in\mathbb{N}}\subset K$. We can assume that
		$x_{n}\rightarrow x\in A_{G}^{-}(N)$, since $K\subset A_{G}^{-}(N)$ and
		$A_{G}^{-}(N)$ is compact. Our goal is to show that we can find $\phi
		\in\mathcal{R}$ such that $\phi(0)=x$ and $\phi(t)\in N$, for all $t\geq0$. We
		just observe that, for each $n\in\mathbb{N}$, we can find 
			$\phi_{n}\in\mathcal{R}$ such that $\phi_{n}(0)=x_{n}$ and $\phi
		_{n}(\mathbb{R}^{+})\subset N$. Then, by the property (K4), we
		find $\phi\in\mathcal{R}$ such that $\phi(0)=x$ and $\phi_{n}(t)\rightarrow
		\phi(t)$, for all $t\geq0$. Since $N$ is closed, it follows that
		$\phi(\mathbb{R}^{+})\subset N$, hence $x\in A_{G}^{+}(N)$. Therefore, $x\in
		K$ and $K$ is compact.
	\end{itemize}
\end{proof}

\bigskip

Before presenting the main theorem of this section, we need to define
auxiliary functions that play an essential role in the definition of the
block. So, we will fix a multivalued semiflow $G$, the sets $\emptyset\neq
U\subset N$ with $U=intN$ and we will denote $A_{G}^{-}(N)$ simply by
$A^{-}(N)$. We recall that {$\mathcal{U}=\{\phi\in\mathcal{R}:\phi(0)\in U\}$
	and $\mathcal{N}=\{\phi\in\mathcal{R}:\phi(0)\in N\}.$}

Basically, we will construct two functions, one \textquotedblleft
identifying\textquotedblright\ the stable part inside the neighborhood $N$ and
the other \textquotedblleft identifying\textquotedblright\ the unstable part
inside the neighborhood $N$. For these functions, we will prove results of
monotonicity and upper semicontinuity that will be important to characterize
the flow behavior close to the invariant set.

Define the functions:

\begin{enumerate}
	\item $s^{+}: \mathcal{N} \to\mathbb{R}\cup\{\infty\}$,
	\[
	s^{+}(\phi)=\sup\{t \in\mathbb{R}^{+}: \phi\mbox{ is defined on } \lbrack0,t]
	\mbox{ and } \phi([0,t])\subset N\},
	\]

	\item $t^{+}: \mathcal{U}\to\mathbb{R}\cup\{\infty\}$,
	\[
	t^{+}(\phi)=\sup\{t \in\mathbb{R}^{+}: \phi\mbox{ is defined on } \lbrack0,t]
	\mbox{ and } \phi([0,t])\subset U\},
	\]

	\item $F: X \to[0,1]$, $F(x)=\min\{1,d(x,A^{-}(N))\}$,
	
	\item $D: X \to[0,1]$, $D(x)=\frac{d(x,K)}{d(x,K)+d(x, X\setminus N)}$,
	
	\item $g^{+}: U \to\mathbb{R}^{+}$,
	\[
	g^{+}(x)=\inf_{{\phi\in\mathcal{U},}{\phi(0)=x}} \inf\left\{  \frac
	{D(\phi(t))}{1+t}: 0\leq t< t^{+}(\phi)\right\}  ,
	\]

	\item $g^{-}: N \to\mathbb{R}^{+}$,
	\[
	g^{-}(x)=\sup_{{\phi\in\mathcal{N},}{\phi(0)=x}} \sup\left\{  \alpha
	(t)F(\phi(t)):
	\begin{aligned} 0\leq t \leq s^+(\phi), \mbox{ if } s^+(\phi)<\infty,\\ 0\leq t <+\infty, \mbox{ otherwise} \end{aligned}\right\}
	,
	\]
	where $\alpha:[0,\infty)\to[1,2)$ is a monotone increasing $C^{\infty}-$diffeomorphism.
\end{enumerate}

Observe that the functions $F$ and $D$ are continuous since they can be
written as compositions of continuous functions. In that situation, we find
the following result.

\begin{proposition}
	\label{prop:monot.gplus}{Assume (K1)-(K4)}. The function $g^{+}$ is increasing
	along solutions as long the orbits are in $U$. To be more precise, given
	$x,y\in U$, for which there are $\varphi\in\mathcal{R}$ and $t>0$, with
	$\varphi(0)=x$, $y=\varphi(t)$ and $\varphi([0,t])\subset U$, it follows that
	$g^{+}(x)\leq g^{+}(y)$.
	
	Moreover, if $g^{+}(x)\neq0$, then $g^{+}(x)< g^{+}(y)$.
\end{proposition}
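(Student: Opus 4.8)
The plan is to unravel the two infima in the definition of $g^{+}$ and use the concatenation property (K3) to compare the orbits through $x$ with those through $y$. Fix $x,y\in U$, $\varphi\in\mathcal{R}$ with $\varphi(0)=x$, $y=\varphi(t)$ and $\varphi([0,t])\subset U$. First I would observe that $t<t^{+}(\varphi)$ or $t=t^{+}(\varphi)$; in either case, since $\varphi([0,t])\subset U$ and $U$ is open, for the purposes of the infimum we may regard $\varphi$ as a competitor for $x$ on $[0,t]$. Now take any $\psi\in\mathcal{U}$ with $\psi(0)=y$. By (K3), the concatenation $\chi$ of $\varphi|_{[0,t]}$ with $\psi$ belongs to $\mathcal{R}$, $\chi(0)=x\in U$, so $\chi\in\mathcal{U}$; moreover $\chi(s)=\varphi(s)$ for $s\in[0,t]$ and $\chi(t+r)=\psi(r)$ for $r\geq0$. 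One checks that $t^{+}(\chi)=t+t^{+}(\psi)$ (the orbit stays in $U$ exactly as long as $\psi$ does, after having stayed in $U$ on $[0,t]$).

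Next I would compare the inner infima. For the competitor $\chi$ in the definition of $g^{+}(x)$,
\[
\inf\left\{\frac{D(\chi(s))}{1+s}:0\le s<t^{+}(\chi)\right\}\le \inf\left\{\frac{D(\chi(t+r))}{1+t+r}:0\le r<t^{+}(\psi)\right\}=\inf\left\{\frac{D(\psi(r))}{1+t+r}:0\le r<t^{+}(\psi)\right\}.
\]
Since $1+t+r\ge 1+r$ and $D\ge 0$, the right-hand side is $\le \inf\left\{\frac{D(\psi(r))}{1+r}:0\le r<t^{+}(\psi)\right\}$. Hence $g^{+}(x)\le\inf\{\cdots\}$ for this particular $\psi$, and taking the infimum over all admissible $\psi$ through $y$ gives $g^{+}(x)\le g^{+}(y)$. (One should also handle the degenerate possibility $t^{+}(\psi)=0$: then the infimum over an empty index set is $+\infty$ by convention, and the inequality is trivial; but in fact $y\in U$ open forces $t^{+}(\psi)>0$ for any $\psi\in\mathcal{R}$ through $y$, by continuity.)

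For the strict inequality when $g^{+}(x)\ne 0$: suppose $g^{+}(x)>0$. I would argue that then the infimum in $g^{+}(y)$ cannot be attained (or approached) at "time $0$", because the factor $\tfrac{1}{1+t}<1$ strictly improves the bound coming from the segment $[0,t]$. More precisely, for each $\psi$ through $y$, either the defining infimum for $\psi$ is realized in the limit $r\to\infty$ or at some $r\ge0$; using $\chi$ as above, the contribution of the $s\in[0,t]$ portion to $g^{+}(x)$ is $\inf_{0\le s\le t}\frac{D(\varphi(s))}{1+s}$, which is $\ge g^{+}(x)>0$, while the contribution of the tail is $\inf_r \frac{D(\psi(r))}{1+t+r}$. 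Combining, $g^{+}(x)\le \min\{\,\inf_{0\le s\le t}\tfrac{D(\varphi(s))}{1+s},\ \tfrac{1}{1+t}\inf_r\tfrac{D(\psi(r))}{1+r/(1+t)\cdot(1+t)}\,\}$; the key point is that the tail term is $\le \tfrac{1}{1+t}\cdot(1+t)\cdot(\text{the }\psi\text{-infimum with denominator }1+r)$ — wait, I must be careful here. Let me instead take the infimum over $\psi$ first: since for every $\psi$ we showed $g^{+}(x)\le \inf\{\tfrac{D(\psi(r))}{1+t+r}\}$ and the map $r\mapsto 1+t+r$ is bounded below by $(1+t)(1+r)^{?}$... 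The clean route is: $\tfrac{D(\psi(r))}{1+t+r}\ge \tfrac{D(\psi(r))}{(1+t)(1+r)}$ is false in general, but $\tfrac{1}{1+t+r}\le\tfrac{1}{1+r}$ gives one direction already used. To get strictness, note that because $g^{+}(x)>0$ we have $D(\varphi(0))=D(x)>0$ hence $x\notin K$ and $x\notin X\setminus N$; and the segment term forces $g^{+}(x)\le D(x)/1=D(x)$, but also $g^{+}(x)\le \inf_r \tfrac{D(\psi(r))}{1+t+r}$; taking $\inf$ over $\psi$, $g^{+}(x)\le \inf_{\psi}\inf_r \tfrac{D(\psi(r))}{1+t+r}$, and since each denominator satisfies $1+t+r\ge(1+t)(1+r)/1$ is again not quite an identity — the correct elementary inequality is $1+t+r \le (1+t)(1+r)$, giving $\tfrac{D(\psi(r))}{1+t+r}\ge\tfrac{D(\psi(r))}{(1+t)(1+r)}=\tfrac{1}{1+t}\cdot\tfrac{D(\psi(r))}{1+r}$. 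Therefore $g^{+}(x)\le \tfrac{1}{1+t}\,g^{+}(y)$, and since $g^{+}(x)>0$ and $t>0$ we get $g^{+}(x)<g^{+}(y)$.

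The main obstacle, I expect, is the bookkeeping around $t^{+}(\chi)=t+t^{+}(\psi)$ and the treatment of the endpoint of the half-open interval $[0,t^{+}(\varphi))$ — in particular whether $t$ itself is admissible as an index in the infimum for $x$ — together with making the "time-$0$ contribution" argument for strictness rigorous; the substantive analytic inequality, $1+t+r\le(1+t)(1+r)$, is elementary, so once the concatenation is set up correctly the rest is routine.
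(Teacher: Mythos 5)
Your proof of the non-strict inequality $g^{+}(x)\leq g^{+}(y)$ is correct and is essentially the paper's argument: concatenate $\varphi|_{[0,t]}$ with each competitor $\psi$ through $y$ via (K3), note $t^{+}(\chi)=t+t^{+}(\psi)$, drop the $[0,t]$ portion of the infimum, and use $1+t+r\geq 1+r$.

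The strictness argument, however, has a genuine gap. From $g^{+}(x)\leq\inf_{r}\frac{D(\psi(r))}{1+t+r}$ and the elementary inequality $\frac{D(\psi(r))}{1+t+r}\geq\frac{1}{1+t}\cdot\frac{D(\psi(r))}{1+r}$ you conclude $g^{+}(x)\leq\frac{1}{1+t}\,g^{+}(y)$. This is a non sequitur: the pointwise inequality bounds the quantity $\inf_{r}\frac{D(\psi(r))}{1+t+r}$ \emph{from below} by $\frac{1}{1+t}f^{+}(\psi)$, whereas you would need to bound it \emph{from above} by that quantity to transfer the estimate to $g^{+}(x)$. Indeed the multiplicative claim is false in general: if the infimum defining $f^{+}(\psi)$ is approached at a large time $r$, then $\frac{D(\psi(r))}{1+t+r}=\frac{1+r}{1+t+r}\cdot\frac{D(\psi(r))}{1+r}$ and the factor $\frac{1+r}{1+t+r}$ is close to $1$, not to $\frac{1}{1+t}$. (Concretely, if $D\equiv 1$ along the orbit and $t^{+}(\psi)=R$, the two sides are $\frac{1}{1+t+R}$ versus $\frac{1}{(1+t)(1+R)}$, and the first is strictly larger.) So your chain of inequalities only reproves $g^{+}(x)\leq g^{+}(y)$.

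What is missing is precisely the point the paper's proof is organized around: one must first show that the times $\tau$ at which the infima are (approximately) attained are \emph{uniformly bounded} over all competitors. This follows from $g^{+}(x)=\mu>0$: since $D\leq 1$, any near-minimizing time $\tau$ for a competitor through $y$ satisfies $\frac{1}{1+\tau}\geq\frac{D(\cdot)}{1+\tau}\approx f^{+}\geq\mu$, hence $\tau\leq\frac{1}{\mu}-1$ (the paper runs this as a contradiction argument with $\tau_{n}\to+\infty$). With $\tau$ bounded, the shift by $t$ in the denominator produces a uniform \emph{additive} gap, $\frac{D(\phi(\tau))}{1+\tau-t}-\frac{D(\phi(\tau))}{1+\tau}\geq\frac{\mu t}{1+\tau}\geq\mu_{0}t>0$, which yields $g^{+}(x)+\mu_{0}t\leq g^{+}(y)$ and hence strictness. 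Without the uniform bound on $\tau$, neither a multiplicative nor an additive gap can be extracted.
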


\begin{proof}
	Consider $x,y\in U$ such that there are $\varphi\in\mathcal{R}$, $t>0$, with
	$\varphi(0)=x$, $\varphi(t)=y$ and $\varphi([0,t])\subset U$. We want to show
	that {$g^{+}(x)\leq g^{+}(y).$}
	
	In order to simplify the exposition, for any $\psi\in\mathcal{U}$ we define
	\[
	f^{+}(\psi)=\inf\left\{  \frac{D(\psi(t))}{1+t}:0\leq t<t^{+}(\psi)\right\}
	\]
	and the sets $\mathcal{U}_{z}=\{\phi\in\mathcal{U}:\phi(0)=z\}$, for $z=x,y$.
	Then, for each $\phi\in\mathcal{U}_{y}$ we can define a $\psi\in
	\mathcal{U}_{x}$ as the concatenation of $\varphi$ and $\phi$, which is
	well-defined by (K3). The set of such functions $\psi$ will be denoted by
	$\mathcal{U}_{xy}$. {We will show that $f^{+}(\psi)\leq f^{+}(\phi)$. In
		fact,
		\[
		\begin{aligned} f^+(\psi)& =\inf\left(\left\{\frac{D(\varphi(s))}{1+s}: s\in[0,t]\right\}\cup \left\{\frac{D(\phi(s-t))}{1+s}: s\in [t, t^+(\psi))\right\}\right)\\ &\leq \inf\left\{\frac{D(\phi(s-t))}{1+s}: s\in [t, t^+(\psi))\right\}=\inf\left\{\frac{D(\phi(u))}{1+u+t}: u\in [0, t^+(\phi))\right\}\leq f^+(\phi), \end{aligned}
		\]
		where we have used that $t^{+}(\psi)=t^{+}(\phi)+t$, by construction. }
	
	Thus
	\[
	g^{+}(x)=\inf_{\phi\in\mathcal{U}_{x}} f^{+}(\phi) \leq\inf_{\psi
		\in\mathcal{U}_{xy}} f^{+}(\psi) \leq\inf_{\phi\in\mathcal{U}_{y}} f^{+}%
	(\phi)=g^{+}(y).
	\]

	Now, assume that $g^{+}(x)\neq0$. Thus, we find $\mu>0$ such
		that $f^{+}(\phi_{t})\geq f^{+}(\phi)\geq\mu$, for $\phi_{t}(\cdot
		)=\phi(t+\cdot)$, $\phi\in\mathcal{U}_{x}$. Since
		$f^{+}(\phi_{t})>0$, there is $\tau=\tau(\phi)\geq t$ for which $f^{+}%
	(\phi_{t})=\frac{D(\phi_{t}(\tau-t))}{1+(\tau-t))}$.
	
	Hence,
	\[
	\begin{aligned} \frac{f^+(\phi_t)-f^+(\phi)}{t} \geq \frac{1}{t}\left(\frac{D(\phi(\tau))}{1+(\tau-t)}-\frac{D(\phi(\tau))}{1+\tau}\right) \geq \frac{D(\phi(\tau))}{(1+\tau)^2} \geq \frac{f^+(\phi)}{1+\tau} \geq \frac{\mu}{1+\tau}. \end{aligned}
	\]

	We claim that there is $\mu_{0}>0$ such that $\frac{\mu}{1+\tau(\phi)}\geq
	\mu_{0}$, for all $\phi\in\mathcal{U}_{xy}$. If this were not true, we would
	find a sequence $\phi^{(n)}\in\mathcal{U}_{xy}$ and $\{\tau_{n}\}_{n\in
		\mathbb{N}}\in\lbrack t,+\infty)$ with $\tau_{n}\rightarrow+\infty$ and such
	that $f^{+}(\phi_{t}^{(n)})=\frac{D(\phi_{t}^{(n)}(\tau_{n}-t))}{1+(\tau
		_{n}-t)}$. But then we would have
	\[
	0<g^{+}(x)\leq g^{+}(y)=\inf_{\phi\in\mathcal{U}_{xy}}f^{+}(\phi_{t})\leq
	\inf_{n\in\mathbb{N}}f^{+}(\phi_{t}^{(n)})=\inf_{n\in\mathbb{N}}\tfrac
	{D(\phi_{t}^{(n)}(\tau_{n}-t))}{1+\tau_{n}-t}\leq\inf_{n\in\mathbb{N}}%
	\tfrac{1}{1+\tau_{n}-t}=0,
	\]
	which is a contradiction. Therefore, we find $\mu_{0}>0$ which {implies that
		$g^{+}(x)+\mu_{0}t\leq g^{+}(y)$} and, consequently, $g^{+}(x)<g^{+}(y)$, as desired.
\end{proof}

\bigskip

The above proposition is the analogous to the item (2) in \cite[Proposition
	5.2]{Rybakowski} in the single-valued case. Further, we will show the
monotonicity of $g^{-}$.

\begin{proposition}
	\label{prop:monot.gminus} {Assume (K1)-(K4). Let the closed set $N$ be }%
	$G$-{admissible.} The function $g^{-}$ is decreasing along solutions as long
	the orbits stay on $N$. To be more precise, if $x,y\in N$ and there are
	$\varphi\in\mathcal{R}$ and $s>0$, with $\varphi(0)=x$, $y=\varphi(s)$ and
	$\varphi([0,s])\subset N$, then $g^{-}(x)\geq g^{-}(y)$.
	
	Moreover, if $g^{-}(x)\neq0$, then $g^{-}(x)> g^{-}(y)$.
\end{proposition}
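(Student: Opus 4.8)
The plan is to follow the scheme of Proposition~\ref{prop:monot.gplus}, replacing infima by suprema and letting the monotonicity of $\alpha$ play the role that the factor $1/(1+t)$ played there. For $\psi\in\mathcal{N}$ write $f^{-}(\psi)=\sup\{\alpha(t)F(\psi(t)):t\in J(\psi)\}$, where $J(\psi)=[0,s^{+}(\psi)]$ if $s^{+}(\psi)<\infty$ and $J(\psi)=[0,+\infty)$ otherwise, so that $g^{-}(z)=\sup\{f^{-}(\psi):\psi\in\mathcal{N},\ \psi(0)=z\}$. Fix $x,y,\varphi,s$ as in the statement, set $\mathcal{N}_{z}=\{\phi\in\mathcal{N}:\phi(0)=z\}$, and for each $\phi\in\mathcal{N}_{y}$ let $\psi_{\phi}$ be the concatenation of $\varphi|_{[0,s]}$ with $\phi$, which lies in $\mathcal{N}_{x}$ by (K3) and satisfies $s^{+}(\psi_{\phi})=s+s^{+}(\phi)$. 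Denote by $\mathcal{N}_{xy}\subset\mathcal{N}_{x}$ the collection of these $\psi_{\phi}$.

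First I would prove $f^{-}(\psi_{\phi})\ge f^{-}(\phi)$ for every $\phi\in\mathcal{N}_{y}$. Splitting $J(\psi_{\phi})$ at $t=s$ and substituting $u=t-s$ on the part $t>s$, that part of the supremum equals $\sup\{\alpha(u+s)F(\phi(u)):u\in J(\phi),\ u>0\}$, which dominates $\sup\{\alpha(u)F(\phi(u)):u\in J(\phi),\ u>0\}$ since $\alpha$ is increasing, while the value $u=0$ is accounted for by $\alpha(s)F(\psi_{\phi}(s))=\alpha(s)F(y)\ge\alpha(0)F(\phi(0))$. Thus $f^{-}(\psi_{\phi})\ge f^{-}(\phi)$; taking suprema over $\phi\in\mathcal{N}_{y}$ and using $\mathcal{N}_{xy}\subset\mathcal{N}_{x}$ yields $g^{-}(x)\ge g^{-}(y)$.

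For the strict statement, suppose $g^{-}(x)\neq0$; if $g^{-}(y)=0$ there is nothing to prove, so assume for contradiction $g^{-}(y)=g^{-}(x)=:c>0$. Choose $\phi_{n}\in\mathcal{N}_{y}$ with $f^{-}(\phi_{n})\to c$, and for $n$ large pick $u_{n}\in J(\phi_{n})$ with $\alpha(u_{n})F(\phi_{n}(u_{n}))\ge f^{-}(\phi_{n})-1/n$, so that $\alpha(u_{n})F(\phi_{n}(u_{n}))\to c$. The value $\alpha(u_{n}+s)F(\phi_{n}(u_{n}))$ occurs among the terms defining $f^{-}(\psi_{\phi_{n}})\le g^{-}(x)=c$, hence $\alpha(u_{n}+s)/\alpha(u_{n})\le \frac{c}{\alpha(u_{n})F(\phi_{n}(u_{n}))}\to1$. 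Since $\alpha$ is a continuous increasing diffeomorphism of $[0,+\infty)$ onto $[1,2)$, the ratio $\alpha(\cdot+s)/\alpha(\cdot)$ is $>1$ everywhere and is bounded below by a constant $>1$ on each compact interval, so $u_{n}\to+\infty$; consequently $\alpha(u_{n})\to2$ and $F(\phi_{n}(u_{n}))\to c/2>0$.

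The contradiction then comes from admissibility. Since $\phi_{n}([0,u_{n}])\subset N$, $\phi_{n}(0)=y$ and $u_{n}\to+\infty$, the $G$-admissibility of $N$ gives a subsequence with $\phi_{n_{k}}(u_{n_{k}})\to z\in N$, and continuity of $F$ forces $F(z)=c/2>0$, so $z\notin A^{-}(N)$ (which is closed by Proposition~\ref{RybTheo4.5}(b3)). On the other hand Proposition~\ref{RybTheo4.5}(b1), applied with $G_{n}\equiv G$ and $t_{n}=u_{n}$, says every limit point of $\{\phi_{n}(u_{n})\}$ belongs to $A^{-}(N)$, whence $z\in A^{-}(N)$ — a contradiction, so $g^{-}(x)>g^{-}(y)$. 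I expect this last step to be the main obstacle: one has to locate where the supremum defining $f^{-}(\phi_{n})$ is almost attained and then exploit that the compression factor $\alpha(\cdot+s)/\alpha(\cdot)>1$ degenerates to $1$ only along sequences that run off to infinity while staying in $N$, which is exactly the behaviour ruled out by admissibility via Proposition~\ref{RybTheo4.5}.
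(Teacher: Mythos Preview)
Your argument is correct. The weak inequality $g^{-}(x)\ge g^{-}(y)$ is obtained exactly as in the paper, via concatenation and the monotonicity of $\alpha$.

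For the strict inequality your route differs from the paper's. The paper argues by contradiction at the level of \emph{solutions starting at $x$}: assuming no uniform gap $\delta>0$ with $f^{-}(\phi)>f^{-}(\phi_{s})+\delta$ for all $\phi\in\mathcal{N}_{x}$, it uses (K4) to pass to a limiting solution $\phi$, then splits into the cases $t_{n}\to+\infty$ (handled via admissibility and Proposition~\ref{RybTheo4.5}) and $t_{n}\to t_{0}<+\infty$ (handled by the strict monotonicity of $\alpha$, yielding $\alpha(r_{0}+s)\le\alpha(r_{0})$). As a by-product one gets the quantitative estimate $g^{-}(x)\ge g^{-}(y)+\delta$. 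You instead work with \emph{solutions starting at $y$} and exploit the single inequality $\alpha(u_{n}+s)F(\phi_{n}(u_{n}))\le g^{-}(x)=c$ to force the ratio $\alpha(u_{n}+s)/\alpha(u_{n})\to1$; the compactness-of-the-ratio argument then rules out bounded $u_{n}$ in one stroke, so only the admissibility case survives. Your approach is a bit leaner: it avoids invoking (K4) to produce a limit solution and dispenses with the finite/infinite case distinction, at the cost of not yielding the quantitative gap $\delta$ (which, however, is not used elsewhere in the paper).
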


\begin{proof}
	Let $x,y\in N$, $\varphi\in\mathcal{R}$ and $s>0$ be as in the hypothesis.
	Consider, for any $\phi\in\mathcal{N}$,
	\[
	f^{-}(\phi)=\sup\left\{  \alpha(t)F(\phi
	(t)):\begin{aligned} 0\leq t \leq s^+(\phi), \mbox{ if } s^+(\phi)<\infty,\\ 0\leq t <+\infty, \mbox{ otherwise} \end{aligned}\right\}
	\]
	and the sets $\mathcal{N}_{z}=\{\phi\in\mathcal{N}:\phi(0)=z\}$, for $z=x,y$,
	and $\mathcal{N}_{xy}=\{\phi\in\mathcal{N}:\phi(0)=x\mbox{ and }$ $\phi(s)=y\}$.
	
	For each $\phi\in\mathcal{N}_{y}$, by (K3), we can define a $\psi
	\in\mathcal{N}_{xy}$ as the concatenation of $\varphi$ with $\phi$. In a
	similar way as was done before for the function $f^{+}$, one can prove easily
	that $f^{-}(\psi)\geq f^{-}(\phi)$. Consequently,
	\[
	g^{-}(x)=\sup_{\phi\in\mathcal{N}_{x}}f^{-}(\phi)\geq\sup_{\psi\in
		\mathcal{N}_{xy}}f^{-}(\psi)\geq\sup_{\phi\in\mathcal{N}_{y}}f^{-}(\phi
	)=g^{-}(y),
	\]
	that is, ${g}^{-}(x)\geq g^{-}(y).$
	
	We want to show that, if $g^{-}(x)\neq0$, we have $g^{-}(y) < g^{-}(x)$. By
	definition, $g^{-}(x)=\sup_{\phi\in\mathcal{N}_{x}} f^{-}(\phi)$, which
	implies that, for at least one $\phi\in\mathcal{N}_{x}$, we have $f^{-}%
	(\phi)\neq0$. In particular, $x=\phi(0) \notin A^{-}(N) $ and $F(x)>\mu$, for
	some $\mu>0$.

	In order to obtain the desired result, we will show that there is $\delta>0$
	such that, for all $\phi\in\mathcal{N}_{x}$, we have $f^{-}(\phi)>\delta
	+f^{-}(\phi_{s})$. If this can be proved, we may take supremum in both sides
	and we would find, for $y=\phi(s)$, that
	\[
	g^{-}(x)\geq\sup_{\phi\in\mathcal{N}_{xy}}f^{-}(\phi)\geq\sup_{\phi
		\in\mathcal{N}_{xy}}f^{-}(\phi_{s})+\delta=g^{-}(y)+\delta,
	\]
	which implies $g^{-}(x)>g^{-}(y)$.
	
	If there does not exist such $\delta>0$, then, for each $n \in\mathbb{N}$, we
	would find $\phi^{(n)} \in\mathcal{N}_{x}$ with
	\[
	f^{-}\left(  \phi^{(n)}\right)  \leq f^{-}\left(  \phi_{s}^{(n)}\right)
	+\frac{1}{1+n}.
	\]

	By {(K4)}, since $\{\phi^{(n)}(0)=x\}_{n\in\mathbb{N}}$ is convergent, we may
	assume $\phi^{(n)}(t)\rightarrow\phi(t)$, for all $t\geq0$, for some $\phi
	\in\mathcal{N}_{x}$. So, up to a convergent subsequence, we find
	\[
	0<\mu\leq\beta:=\lim_{n\rightarrow+\infty}f^{-}(\phi^{(n)})\leq\lim
	_{n\rightarrow+\infty}f^{-}(\phi_{s}^{(n)}).
	\]

	Observe that, for each $n\in\mathbb{N}$, we find $t_{n},\ r_{n},\gamma
	_{n},\eta_{n}\in\mathbb{R}^{+}$ such that
$$\begin{aligned}
&f^{-}(\phi^{(n)})=\alpha
	(t_{n})F(\phi^{(n)}(t_{n})) + \gamma_{n}, \\
& f^{-}(\phi_{s}^{(n)})=\alpha
	(r_{n})F(\phi_{s}^{(n)}(r_{n}))+\eta_{n}=\alpha(r_{n})F(\phi^{(n)}%
	(r_{n}+s))+\eta_{n}
\end{aligned}
	$$ and $\gamma_{n},\eta_{n}\rightarrow0.$ Without loss of
	generality, we may assume that $t_{n}\rightarrow t_{0}$ and $r_{n}\rightarrow
	r_{0}$ as $n$ goes to $+\infty$, for some $t_{0},r_{0}\in\mathbb{R}^{+}%
	\cup\{+\infty\}$.
	
	Suppose initially that $t_{n}\rightarrow+\infty$ as $n$ goes to $+\infty$.{
		Since $N$ is admissible} and $\phi^{(n)}([0,t_{n}])\subset N$, by Proposition
	\ref{RybTheo4.5} we find $y\in A^{-}(N)$ such that, up to a subsequence,
	$\phi^{(n)}(t_{n})\rightarrow y$ as $n$ goes to $+\infty$. Consequently,
	$F(\phi^{(n)}(t_{n}))\rightarrow F(y)=0$ and, since $\alpha(t_{n}%
	)\rightarrow\overline{\alpha}\leq2$, we conclude that $\beta=0$, which is a
	contradiction.

	On the other hand, assume that $t_{n}\rightarrow t_{0}<+\infty$. Since
	$\beta\neq0$, we also have $r_{n}\rightarrow r_{0}<+\infty$. It follows that
	$\beta=\alpha(t_{0})F(\phi(t_{0}))\leq\alpha(r_{0})F(\phi(r_{0}+s))$.
	
	But now, for any $n\in\mathbb{N}$, by definition of $t_{n}$, 
	$\alpha(r_{n}+s)F(\phi^{(n)}(r_{n}+s))\leq\alpha(t_{n})F(\phi^{(n)}%
	(t_{n})) + \gamma_{n}$ and, by applying the limit, we obtain%
	\[
	\alpha(r_{0}+s)F(\phi(r_{0}+s))\leq\alpha(r_{0})F(\phi(r_{0}+s))\implies
	\alpha(r_{0}+s)\leq\alpha(r_{0}),
	\]
	since $F(\phi(r_{0}+s))\neq0$. This is a contradiction with $\alpha$ being a
	strictly increasing function.
	
	Therefore, we can find $\delta>0$, as desired, and the assumption follows.
\end{proof}

\bigskip

{The above {result is the analogous of the item (3) in \cite[Proposition
		5.2]{Rybakowski} in the univalued case. Propositions \ref{prop:monot.gplus}
		and \ref{prop:monot.gminus} have shown the monotonicity of $g^{\pm}$ for
		points in the same orbit.} }

The following result describes the role that $g^{+}$ and $g^{-}$ have on
identifying weakly invariant regions.

\begin{proposition}
	{Assume (K1)-(K4)}. Let $K\neq\emptyset$ be a closed isolated weakly invariant
	set. Suppose that $N$ is a closed isolating neighborhood of $K$. The following holds:
	
	\begin{itemize}
		\item[i)] Consider $x\in U$. If $g^{+}(x)=0$, then $x\in A^{+}(N)$.
		
		\item[ii)] Consider $x\in N$. We have that $g^{-}(x)=0$ if, and only if, $x\in
		A^{-}(N)$.
		
		If $x\in N$ and $g^{-}(x)=0$, it follows that $g^{-}(y)=0$ for all values
		$y=\phi(t)$, where $\phi\in\mathcal{R}$ with $\phi(0)=x$ and $t\in
		\lbrack0,s^{+}(\phi))$. If $s^{+}(\phi)<+\infty$, then this is
			true also for $t=s^{+}(\phi)$.
	\end{itemize}
\end{proposition}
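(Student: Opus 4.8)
The plan is to prove the three assertions in turn, treating (i) first, then the equivalence in (ii), and finally the propagation statement at the end of (ii).

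For (i), suppose $x\in U$ with $g^{+}(x)=0$. By definition of $g^{+}$ as an infimum over $\phi\in\mathcal{U}_{x}$ and then over $t\in[0,t^{+}(\phi))$, there is a sequence $\phi^{(n)}\in\mathcal{R}$ with $\phi^{(n)}(0)=x$ and a sequence $s_{n}\in[0,t^{+}(\phi^{(n)}))$ with $D(\phi^{(n)}(s_{n}))/(1+s_{n})\to 0$. The first step is to argue that $s_{n}\to+\infty$: if $\{s_{n}\}$ had a bounded subsequence, then along it $1+s_{n}$ stays bounded, so $D(\phi^{(n)}(s_{n}))\to 0$, which (since $D(z)=0$ forces $z\in K$, as $d(z,X\setminus N)>0$ on $U$ and $d(z,K)=0$) would put the limit point in $K$; but $K$ is weakly invariant and contained in $U=\mathrm{int}\,N$, and a short argument using (K4) and the closedness of $N$ would then actually furnish a solution staying in $N$ through $x$, i.e.\ $x\in A^{+}(N)$, and we'd be done in that case anyway. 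So assume $s_{n}\to+\infty$. Now $\phi^{(n)}([0,s_{n}])\subset U\subset N$, so item (a2)/(a1) of Proposition \ref{RybTheo4.5} (in the case $G_{n}\equiv G$, using (K4)) yields a subsequence and $\phi\in\mathcal{R}$ with $\phi(0)=x$, $\phi_{n}\to\phi$ uniformly on compacts, and $\phi(t)\in N$ for all $t\geq 0$. Hence $x\in A^{+}(N)$, proving (i).

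For the equivalence in (ii): the implication $x\in A^{-}(N)\Rightarrow g^{-}(x)=0$ is the easy direction — if $x\in A^{-}(N)$ then $F(x)=d(x,A^{-}(N))\wedge 1=0$, and one needs $F(\phi(t))=0$ for every relevant $\phi\in\mathcal{N}_{x}$ and $t$; but $F(\phi(t))=0$ exactly when $\phi(t)\in A^{-}(N)$, which one gets by gluing the complete trajectory through $x$ that stays in $N$ on $(-\infty,0]$ with the piece of $\phi$ on $[0,t]$ (using (K3)), so every term in the supremum defining $g^{-}(x)$ vanishes. Wait — this gluing only works if $\phi([0,t])\subset N$, which is exactly the constraint $t\le s^{+}(\phi)$ in the definition of $f^{-}$, so indeed $f^{-}(\phi)=0$ for all admissible $\phi$ and $g^{-}(x)=0$. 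For the converse, suppose $g^{-}(x)=0$. Then in particular $F(x)=\alpha(0)^{-1}\cdot\alpha(0)F(x)\le g^{-}(x)=0$ (taking the constant solution, or $t=0$, and $\alpha(0)=1$), so $F(x)=0$, i.e.\ $x\in cl(A^{-}(N))$; the main step is to show $A^{-}(N)$ is closed. When $N$ is $G$-admissible this is exactly Proposition \ref{RybTheo4.5}(b3). Without admissibility one still has $F(x)=0\Rightarrow x$ is a limit of points $x_{k}\in A^{-}(N)$, each carrying a complete trajectory $\psi_{k}$ with $\psi_{k}((-\infty,0])\subset N$; the hard part is extracting a limiting complete backward trajectory through $x$ without a compactness hypothesis — so I expect this direction to require either the standing admissibility assumption or a reduction showing $A^{-}(N)$ is automatically closed under (K1)--(K4), and the cleanest route is to invoke (b3) of Proposition \ref{RybTheo4.5} and the standing hypothesis that $N$ is $G$-admissible (as in Proposition \ref{prop:monot.gminus}).

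Finally, for the propagation statement: let $x\in N$ with $g^{-}(x)=0$, take $\phi\in\mathcal{R}$, $\phi(0)=x$, and $t\in[0,s^{+}(\phi))$ (or $t=s^{+}(\phi)$ if the latter is finite). Then $\phi([0,t])\subset N$, so $y=\phi(t)\in N$ and the hypotheses of the monotonicity Proposition \ref{prop:monot.gminus} are met with $s=t$ (or by continuity/closedness of $N$ at the endpoint when $s^{+}(\phi)<\infty$): $g^{-}(x)\ge g^{-}(y)\ge 0$ forces $g^{-}(y)=0$. The endpoint case $t=s^{+}(\phi)<\infty$ follows because $\phi([0,s^{+}(\phi)])\subset N$ ($N$ closed), so the same application of Proposition \ref{prop:monot.gminus} applies verbatim. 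The main obstacle throughout is the backward direction of (ii) — establishing $x\in cl(A^{-}(N))\Rightarrow x\in A^{-}(N)$ — which is where admissibility of $N$ is genuinely used; everything else is bookkeeping with the definitions of $F$, $D$, the gluing property (K3), the convergence property (K4), and the already-proven monotonicity of $g^{\pm}$.
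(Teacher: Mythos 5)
Your proposal follows essentially the same route as the paper's proof: item (i) by extracting near-minimizing solutions and times, passing to a limit via (K4)/Proposition \ref{RybTheo4.5}, and concatenating through $K$ via (K3) when the limiting times stay bounded; item (ii) by gluing the backward complete trajectory with the forward piece of $\phi$ for one direction and reducing to $F(x)=0$ for the other. You are right to flag that the implication $g^{-}(x)=0\Rightarrow x\in A^{-}(N)$ really needs $A^{-}(N)$ to be closed (i.e.\ $G$-admissibility of $N$ via Proposition \ref{RybTheo4.5}(b3)), a point the paper's own proof passes over silently even though admissibility is not listed among the proposition's hypotheses; your only other deviation --- deriving the final propagation statement from the monotonicity of $g^{-}$ rather than from the gluing argument directly --- is harmless, though it too imports the admissibility hypothesis of Proposition \ref{prop:monot.gminus}, which the paper's direct argument avoids.
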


\begin{proof}
	Consider item i). If $g^{+}(x)=0$, then, for each $n\in\mathbb{N}$, we find
	$\phi_{n}\in\mathcal{R}$, with $\phi_{n}(0)=x$ and $f^{+}(\phi_{n})<\frac
	{1}{1+n}$. Consequently, for each $n\in\mathbb{N}$, we find $t_{n}\in
	\lbrack0,t^{+}(\phi_{n}))$ with
	\begin{equation}
		\frac{D(\phi_{n}(t_{n}))}{1+t_{n}}<\frac{1}{n+1}. \label{eq:Dphi_n}%
	\end{equation}
	By (K4) we may assume, w. l. g., that as $n$ goes to $+\infty$,
	$\phi_{n}\rightarrow\phi$ uniformly on compact sets, for some $\phi
	\in\mathcal{R}$, with $\phi(0)=x$, and $t_{n}\rightarrow t_{0}$, for some
	$t_{0}\in\mathbb{R}^{+}\cup\{+\infty\}$. In particular, as $N$ is
		closed, it follows that $\phi([0,t_{0}))\subset N$. Now, if $t_{0}=+\infty$,
	then $x\in A^{+}(N)$ follows. On the other hand, if $t_{0}<+\infty$, by
	\eqref{eq:Dphi_n}, we have that $\frac{D(\phi(t_{0}))}{1+t_{0}}=0$. This means
	that $\phi(t_{0})\in K$. In particular, there is $\psi\in\mathcal{R}$ with
	$\psi(0)=\phi(t_{0})$ and $\psi(\mathbb{R}^{+})\subset N$. Consider $\eta
	\in\mathcal{R}$, the concatenation between $\phi$ and $\psi$, which is
	well-defined by the property (K3). Then, $\eta\left(  t\right)  \in N$ for all
	$t\geq0$, so $x\in A^{+}(N)$.
	
	For item ii), suppose first that $g^{-}(x)=0$ for some $x\in N$. Hence, for
	any $\phi\in\mathcal{R}$, with $\phi(0)=x$, we have $f^{-}(\phi)=0$. In
	particular, $F(x)=0$, which implies that $x\in A^{-}(N)$.
	
	Consider now that $x\in A^{-}(N)$ and $\phi\in\mathcal{R}$, with $\phi(0)=x$.
	Fix $t\in\lbrack0,s^{+}(\phi))$, and let $y=\phi(t)$. We want to show that
	$y\in A^{-}(N)$. In fact, since $x\in A^{-}(N)$, there is a
		complete trajectory $\psi$ of $\mathcal{R}$ such that $\psi(0)=x$ and
		$\psi((-\infty,0])\subset N$. Using (K2) and (K3), we have that the
		concatenation of the functions $\psi(t+$\textperiodcentered$)$ and $\phi
		(t+$\textperiodcentered$)$, given by%
		\[
		\varphi(s)=\left\{
		\begin{array}
			[c]{c}%
			\psi(s+t)\text{ if }s\leq-t,\\
			\phi(s+t)\text{ if }s\geq-t,
		\end{array}
		\right.
		\]
		is a complete trajectory of $\mathcal{R}$ and satisfies $\varphi
		((-\infty,0])\subset N$ with $\varphi(0)=y$. Consequently, for any $y\in
		\phi([0,s^{+}(\phi)))$, we have that $y\in A^{-}(N)$ and $f^{-}(\phi)=0$,
		since $t\in\lbrack0,s^{+}(\phi))$ was arbitrary. Therefore, $g^{-}(x)=0$, by
		definition. {If $s^{+}(\phi)<+\infty$, then this is true also for
			$t=s^{+}(\phi)$.} 
	
	The second part follows from the above, since we have shown that the points
	$y\in N$ such as in the hypothesis are also in $A^{-}(N)$.
\end{proof}


 We want to show that the functions $g^{+},g^{-}$ are continuous
	if we restrict their domains.

\begin{lemma}
	\label{lemma:gminus.not.ls} {Assume (K1)-(K4).} Let $K\neq\emptyset$ be a
	closed isolated weakly invariant set. Suppose that $N$ is a closed isolating
	neighborhood of $K$ and that $g^{+}$ is not lower semicontinuous at $x\in
	U=int\ N$. Then there exist sequences $\{x_{n}\}_{n\in\mathbb{N}}\in{U}$,
	$\{\phi_{n}\}_{n\in\mathbb{N}}\in\mathcal{R}$, $\{t_{n}\}_{n\in\mathbb{N}}%
	\in\mathbb{R}^{+}$ and $\phi\in\mathcal{U}_{x}$ satisfying $x_{n}\rightarrow
	x,$ $\phi_{n}\rightarrow\phi$ as $n\rightarrow+\infty$ and, for all
	$n\in\mathbb{N}$, $\phi_{n}(0)=x_{n}$, $t^{+}(\phi)<t_{n}<t^{+}(\phi_{n})$ and
	$\frac{D(\phi_{n}(t_{n}))}{1+t_{n}}<g^{+}(x)$.
\end{lemma}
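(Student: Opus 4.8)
The plan is to unwind the definition of lower semicontinuity failing at $x$, extract the asserted sequences, and use the convergence postulated in (K4) / the definition of convergence in $\mathcal{R}$ to pass to the limit, taking care that the parameter $t_n$ lands strictly between $t^+(\phi)$ and $t^+(\phi_n)$. The key point is that $g^+$ is itself an infimum over solutions of an infimum over time, so ``$g^+$ is not lower semicontinuous at $x$'' means: there is $\varepsilon_0>0$ and a sequence $x_n\to x$ in $U$ with $g^+(x_n)\le g^+(x)-\varepsilon_0$ for all $n$. From $g^+(x_n)<g^+(x)$ and the definition of $g^+$ as an infimum over $\mathcal{U}_{x_n}$, I can choose $\phi_n\in\mathcal{R}$ with $\phi_n(0)=x_n$ and $f^+(\phi_n)<g^+(x)-\tfrac{\varepsilon_0}{2}$ (say), and then, since $f^+(\phi_n)$ is an infimum over $t\in[0,t^+(\phi_n))$, pick $t_n\in[0,t^+(\phi_n))$ with $\tfrac{D(\phi_n(t_n))}{1+t_n}<g^+(x)$. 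That already gives the last inequality in the statement; the remaining work is the two-sided bound $t^+(\phi)<t_n<t^+(\phi_n)$ and the existence of a limit solution $\phi\in\mathcal{U}_x$.

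\medskip

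For the limit solution: by (K4), since $\phi_n(0)=x_n\to x$, pass to a subsequence (not relabelled) so that $\phi_n\to\phi$ uniformly on compact sets of $\mathbb{R}^+$ for some $\phi\in\mathcal{R}$ with $\phi(0)=x$; since $x\in U$ we have $\phi\in\mathcal{U}_x$. The subtle point is $t^+(\phi)<t_n$. Here I would argue by contradiction using upper semicontinuity of $t^+$ along converging sequences, which is exactly the kind of elementary fact one extracts from the openness of $U$: if infinitely many $t_n$ satisfied $t_n\le t^+(\phi)$, then along a further subsequence $t_n\to t_0\le t^+(\phi)$; by uniform convergence on $[0,t_0]$, $\phi_n(t_n)\to\phi(t_0)$, and since $t_0< t^+(\phi)$ (the boundary case $t_0=t^+(\phi)$ needs a short separate argument, using that $\phi([0,t_0))\subset U$ is open to push $t_0$ strictly inside) we get $\phi(t_0)\in U$, hence $D(\phi(t_0))>0$ because $\phi(t_0)\notin X\setminus N$; but then $\tfrac{D(\phi_n(t_n))}{1+t_n}\to \tfrac{D(\phi(t_0))}{1+t_0}>0$. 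On the other hand $f^+(\phi)\ge g^+(x)>0$ would have to be arranged — and indeed, the whole lemma is only interesting when $g^+(x)>0$, which I should note is forced: if $g^+(x)=0$ the inequality $\tfrac{D(\phi_n(t_n))}{1+t_n}<0$ is impossible, so in the non-lower-semicontinuous case $g^+(x)>0$ automatically. Then $f^+(\phi)\ge g^+(x)>\tfrac{D(\phi(t_0))}{1+t_0}$, contradicting that $f^+(\phi)$ is the infimum of $\tfrac{D(\phi(t))}{1+t}$ over $t\in[0,t^+(\phi))$ and $t_0$ is such a time. Hence $t_n>t^+(\phi)$ for all large $n$, and after discarding finitely many terms the claim holds for all $n$.

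\medskip

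The inequality $t_n<t^+(\phi_n)$ is immediate from the choice of $t_n$ inside the interval $[0,t^+(\phi_n))$ defining $f^+(\phi_n)$, so nothing more is needed there. Finally I would double-check the bookkeeping: $\phi_n\in\mathcal{R}$ with $\phi_n(0)=x_n\in U$ means $\phi_n\in\mathcal{U}$, so writing $f^+(\phi_n)$ is legitimate; and the convergence $\phi_n\to\phi$ claimed in the conclusion is the uniform-on-compacts convergence delivered by (K4), which matches the notion of convergence in $\mathcal{R}$ used elsewhere. \textbf{The main obstacle} I anticipate is the boundary case $t_0=t^+(\phi)$ in the contradiction argument: one must rule out that the limit solution $\phi$ exits $U$ exactly at the limiting time while the approximants $\phi_n$ are still inside, which is handled by noting that $\phi([0,t^+(\phi)))\subset U$ is contained in the open set $U$ and invoking continuity of $\phi$ together with the strict inequality $\tfrac{D(\phi_n(t_n))}{1+t_n}<g^+(x)$; combined with $g^+(x)\le g^+(\phi(t))$ along the orbit (Proposition \ref{prop:monot.gplus}) this squeezes out the contradiction. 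Everything else is routine extraction of subsequences and passing to limits.
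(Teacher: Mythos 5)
Your proposal is correct and follows essentially the same route as the paper: extract $x_n$, $\phi_n$, $t_n$ from the failure of lower semicontinuity, invoke (K4) for the limit $\phi\in\mathcal{U}_x$, note that $f^+(\phi)\geq g^+(x)>0$ forces $t^+(\phi)<+\infty$, and rule out $t_0\leq t^+(\phi)$ by continuity of $D$ (the paper packages the needed strict margin by fixing $\mu$ with $g^+(x_n)<\mu<g^+(x)$ and choosing $t_n$ so that $D(\phi_n(t_n))/(1+t_n)<\mu$, whereas you should likewise choose $t_n$ with $D(\phi_n(t_n))/(1+t_n)<g^+(x)-\varepsilon_0/4$, not merely $<g^+(x)$, so that the limit stays strictly below $g^+(x)$). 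One aside in your argument is wrong but harmless: $\phi(t_0)\in U$ does not give $D(\phi(t_0))>0$ (indeed $K\subset U$); the contradiction rests only on $D(\phi(t_0))/(1+t_0)\geq f^+(\phi)\geq g^+(x)$ for $t_0\leq t^+(\phi)$, which is what you actually use.
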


\begin{proof}
	If $g^{+}$ is not lower semicontinuous at $x\in U$, there are $\mu>0$ and
	$\{x_{n}\}_{n\in\mathbb{N}}\in U$ with $g^{+}(x_{n})<\mu<g^{+}(x)$, for all
	$n\in\mathbb{N}$, and $x_{n}\rightarrow x$ as $n\rightarrow+\infty$. Then, for
	each $n\in\mathbb{N}$, we find $\phi_{n}\in\mathcal{R}$, $\phi_{n}(0)=x_{n}$,
	and $t_{n}\in(0,t^{+}(\phi_{n}))$ such that
	\[
	\mu>\frac{D(\phi_{n}(t_{n}))}{1+t_{n}}.
	\]

	Now, by {(K4)}, we may assume that $\phi_{n}\rightarrow\phi$, for some
	$\phi\in\mathcal{U}_{x}$. Observe that $t^{+}(\phi)<+\infty$, since
	$f^{+}(\phi)\geq g^{+}(x)\neq0$. {Following the proof of Proposition
		\ref{RybTheo4.5}, we have that $\{t^{+}(\phi_{n})\}_{n\geq n_{0}}$ is bounded
		for $n_{0}$ sufficiently large, which assures that the sequence $\{t_{n}%
		\}_{n\geq n_{0}}$ is also bounded.} Without loss of generality, we may assume
	that there is a $t_{0}\in\mathbb{R}^{+}$ for which $t_{n}\rightarrow t_{0}$ as
	$n\rightarrow+\infty$.
	
	Since $\frac{D(\phi_{n}(t_{n}))}{1+t_{n}}\rightarrow\frac{D(\phi(t_{0}%
		))}{1+t_{0}}$, as $n\rightarrow+\infty$, it follows that $\frac{D(\phi
		(t_{0}))}{1+t_{0}}\leq\mu$. Necessarily, we must have $t_{0}>t^{+}(\phi)$ and
	then $t_{n}>t^{+}(\phi)$ for $n$ sufficiently large.
	
	Therefore, we just need to replace $\{x_{n}\}_{n \in\mathbb{N}}$, $\{\phi
	_{n}\}_{n \in\mathbb{N}}$ and $\{t_{n}\}_{n \in\mathbb{N}}$ by proper choices
	of subsequences and the result follows.
\end{proof}

\bigskip

Now, we want to show that, if we restrict the domain of these functions to an
appropriate neighborhood of $K$, $g^{+}$ and $g^{-}$ will be continuous functions.

\begin{proposition}
	\label{prop:properties.g} Assume (K1)-(K4). Let $K\neq
	\emptyset$ be a closed isolated weakly invariant set. Suppose $N$ is a closed
	$G$-admissible isolating neighborhood of $K$. The following statements hold:
	
	\begin{itemize}
		\item[1)] Suppose that we have $\{\phi_{n}\}_{n\in\mathbb{N}},\phi
		\in\mathcal{R}$ with $\phi_{n}(s)\rightarrow\phi(s)$ uniformly for $s$ on
		compacts of $\mathbb{R}^{+}$. If $t^{+}(\phi)>\mu$, for some $\mu>0$, then
		there is $n_{0}\in\mathcal{N}$ such that $t^{+}(\phi_{n})>\mu$, for all
		$n\in\mathbb{N}$, $n\geq n_{0}$. If $s^{+}(\phi)<\tau$, for some constant
		$\tau>0$, then there is $n_{1}\in\mathbb{N}$, such that $s^{+}(\phi_{n})<\tau
		$, for all $n\in\mathbb{N}$, with $n\geq n_{1}$.
		
		\item[2)] Assuming {(K5)}, we can prove that $g^{+}$ is upper-semicontinuous
		in $U\cap\mathcal{O}(K)$. Also, the map $g^{+}$ is continuous in a
		neighborhood of $K$ in $\mathcal{O}(K)\cap U$.
		
		\item[3)] The map $g^{-}$ is upper-semicontinuous in $N$. Assuming {(K5)},
		$g^{-}$ is continuous in any neighborhood $W$ of $K$ in $U\cap\mathcal{O}(K)$
		for which $t^{+}(\phi)=s^{+}(\phi)$, for any $\phi\in\mathcal{U}$ with
		$\phi(0)\in W$.
	\end{itemize}
\end{proposition}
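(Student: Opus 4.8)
The plan is to establish each of the three items by proving the corresponding upper and lower semicontinuity separately, and then to obtain continuity near $K$ by combining the two; throughout I will use the auxiliary functions $f^{\pm}$ and the sets $\mathcal{U}_{z}$, $\mathcal{N}_{z}$, $\mathcal{N}_{xy}$ introduced in the proofs of Propositions \ref{prop:monot.gplus}--\ref{prop:monot.gminus}. Item 1) is purely topological: if $t^{+}(\phi)>\mu$, I pick $\mu<\mu'<t^{+}(\phi)$, so that $\phi([0,\mu'])$ is a compact subset of the open set $U$ and $\rho:=d(\phi([0,\mu']),X\setminus U)>0$; uniform convergence of $\phi_{n}$ to $\phi$ on $[0,\mu']$ then forces $\phi_{n}([0,\mu'])\subset U$ for $n$ large, i.e.\ $t^{+}(\phi_{n})\geq\mu'>\mu$. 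If $s^{+}(\phi)<\tau$, by definition of the supremum there is $t^{*}\in(s^{+}(\phi),\tau)$ with $\phi(t^{*})\in X\setminus N$; since $X\setminus N$ is open and $\phi_{n}(t^{*})\to\phi(t^{*})$, we get $\phi_{n}(t^{*})\notin N$ for $n$ large, hence $s^{+}(\phi_{n})\leq t^{*}<\tau$.

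For the upper semicontinuity in items 2) and 3) I would fix a near-optimal solution and perturb it with the stability axioms. For $g^{+}$: given $x\in U\cap\mathcal{O}(K)$ and $\varepsilon>0$, choose $\phi\in\mathcal{U}_{x}$ and $\tau_{0}<t^{+}(\phi)$ with $D(\phi(\tau_{0}))/(1+\tau_{0})<g^{+}(x)+\varepsilon$; for $x_{n}\to x$, (K5) provides, along a subsequence, $\phi_{n}\in\mathcal{R}$ with $\phi_{n}(0)=x_{n}$ converging to $\phi$ uniformly on the compact subsets of $[0,t_{\phi})$, and --- provided $\tau_{0}<t_{\phi}$, which is guaranteed by the compatible choice of $\mathcal{O}(K),N,W$ discussed below --- item 1) gives $\tau_{0}<t^{+}(\phi_{n})$ for $n$ large, so $g^{+}(x_{n})\leq f^{+}(\phi_{n})\leq D(\phi_{n}(\tau_{0}))/(1+\tau_{0})\to D(\phi(\tau_{0}))/(1+\tau_{0})<g^{+}(x)+\varepsilon$; as this holds along every subsequence, $\limsup_{n}g^{+}(x_{n})\leq g^{+}(x)$. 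For $g^{-}$ on $N$ the inequalities reverse: I take near-maximizing $\psi_{n}\in\mathcal{N}_{x_{n}}$ (i.e.\ $f^{-}(\psi_{n})>g^{-}(x_{n})-1/n$), use (K4) to get $\psi_{n}\to\psi\in\mathcal{N}_{x}$ uniformly on compacts, and write $f^{-}(\psi_{n})=\alpha(t_{n})F(\psi_{n}(t_{n}))+\gamma_{n}$ with $\gamma_{n}\to0$ and $t_{n}$ in the admissible range, as in the proof of Proposition \ref{prop:monot.gminus}. If the $t_{n}$ stay bounded, after passing to a subsequence $t_{n}\to t_{*}$ and item 1) forces $t_{*}\leq s^{+}(\psi)$, so $f^{-}(\psi_{n})\to\alpha(t_{*})F(\psi(t_{*}))\leq f^{-}(\psi)\leq g^{-}(x)$; if $t_{n}\to+\infty$, then $s^{+}(\psi_{n})\to+\infty$ and, by the $G$-admissibility of $N$ together with Proposition \ref{RybTheo4.5}(b1) (with $G_{n}\equiv G$), every limit point of $\{\psi_{n}(t_{n})\}$ lies in $A^{-}(N)$, so $F(\psi_{n}(t_{n}))\to0$ while $\alpha(t_{n})\to2$, giving $f^{-}(\psi_{n})\to0\leq g^{-}(x)$. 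Either way $\limsup_{n}g^{-}(x_{n})\leq g^{-}(x)$.

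It remains to obtain the lower semicontinuity on a suitable neighborhood $W$ of $K$ in $U\cap\mathcal{O}(K)$, which I take small enough that every solution issuing from $W$ satisfies $t^{+}(\phi)=s^{+}(\phi)$ (for $g^{-}$ this is precisely the hypothesis; for $g^{+}$ it has to be built into the choice of $W$). For $g^{-}$: given $x\in W$, $x_{n}\to x$ and $\varepsilon>0$, pick $\psi\in\mathcal{N}_{x}$ with a near-maximizing time $\tau_{0}$, approximate $\psi$ via (K5) by $\psi_{n}$ starting at $x_{n}$, keep $\tau_{0}$ inside the admissible range of $\psi_{n}$ by item 1), and conclude $g^{-}(x_{n})\geq f^{-}(\psi_{n})\geq\alpha(\tau_{0})F(\psi_{n}(\tau_{0}))\to\alpha(\tau_{0})F(\psi(\tau_{0}))>g^{-}(x)-\varepsilon$, whence $\liminf_{n}g^{-}(x_{n})\geq g^{-}(x)$. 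For $g^{+}$: I argue by contradiction via Lemma \ref{lemma:gminus.not.ls}. If $g^{+}$ fails to be lower semicontinuous at some $x\in W$, the lemma produces $\phi\in\mathcal{U}_{x}$, $x_{n}\to x$, $\phi_{n}\to\phi$ and times $t^{+}(\phi)<t_{n}<t^{+}(\phi_{n})$ which stay bounded, with limit $t_{0}>t^{+}(\phi)$; since $\phi_{n}([0,t_{n}])\subset U$ and $\phi_{n}\to\phi$ on compacts, letting $n\to\infty$ yields $\phi([0,t_{0}])\subset N$, hence $s^{+}(\phi)\geq t_{0}>t^{+}(\phi)$, contradicting $t^{+}(\phi)=s^{+}(\phi)$ (valid since $\phi(0)=x\in W$). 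Combining these lower bounds with the upper semicontinuity of items 2)--3) gives continuity of $g^{+}$ and $g^{-}$ on $W$.

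The step I expect to be the main obstacle is precisely this lower semicontinuity near $K$: it is where (K5) is genuinely needed (the upper semicontinuity of $g^{-}$ requires only (K4) and admissibility) and where the monotonicity of Propositions \ref{prop:monot.gplus}--\ref{prop:monot.gminus} alone does not suffice --- one must rule out solutions that leave the interior of $N$, touch $\partial N$, and re-enter, which is exactly the content of $t^{+}=s^{+}$ on $W$. A subsidiary recurring technicality is to reconcile the interval $[0,t_{\phi})$ on which (K5) delivers uniform convergence (the exit time from $\mathcal{O}(K)$) with the exit times $t^{+}(\phi)$ and $s^{+}(\phi)$ from $U$ and $N$; this is what forces $\mathcal{O}(K)$, $N$ and $W$ to be chosen in a mutually compatible way before the estimates above can be carried out.
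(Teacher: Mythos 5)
Your treatment of item 1), of the upper semicontinuity of $g^{+}$ and $g^{-}$, and of the lower semicontinuity of $g^{-}$ on a neighborhood where $t^{+}=s^{+}$ follows essentially the paper's route (direct $\varepsilon$-arguments in place of the paper's arguments by contradiction, which is cosmetic). The problem is the remaining claim of item 2), the lower semicontinuity of $g^{+}$ on \emph{some} neighborhood of $K$. You reduce it to the assumption that one can choose a neighborhood $W$ of $K$ such that every $\phi\in\mathcal{U}$ with $\phi(0)\in W$ satisfies $t^{+}(\phi)=s^{+}(\phi)$, and you explicitly say this ``has to be built into the choice of $W$'' without proving it. This is a genuine gap, and the reduction is circular with respect to the paper's architecture: the only place a neighborhood with $\tilde{t}^{+}=\tilde{s}^{+}$ is produced is in the proof of Theorem \ref{theo:Isol.Block.(K1)-(K5)}, for the \emph{modified} sets $\tilde{U}=H_{\varepsilon}$, $\tilde{N}=clH_{\varepsilon}$, and that construction uses precisely the continuity of $g^{+}$ near $K$ that item 2) is supposed to deliver (via Lemma \ref{lemma:Hepsilon} and the strict monotonicity of $g^{+}$ along orbits). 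For the original $N$ there is no reason such a $W$ exists: a solution issuing from a point arbitrarily close to $K$ may leave $U=intN$, touch $\partial U$, and re-enter, giving $t^{+}<s^{+}$.

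The paper's actual proof of this part cannot be short-circuited in this way. It takes the hypothetical sequence of points of non-lower-semicontinuity with $d(x_{n},K)\rightarrow0$, uses the compactness of $K$ (Proposition \ref{RybTheo4.5}) to get $x_{n}\rightarrow x_{0}\in K$, applies Lemma \ref{lemma:gminus.not.ls} at each $x_{n}$, extracts a diagonal sequence $y_{n},\psi_{n},t_{n}$, and then runs a case analysis ($t^{+}(\phi_{n})\rightarrow+\infty$ with $t_{n}-t^{+}(\phi_{n})$ bounded or unbounded, versus $\{t^{+}(\phi_{n})\}$ bounded), in each case invoking the $G$-admissibility of $N$ and Proposition \ref{RybTheo4.5} to manufacture a point of $A^{+}(N)\cap A^{-}(N)=K$ lying on $\partial U$, which contradicts $K\subset U$. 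None of this appears in your proposal, and you would need to supply it or an equivalent argument that does not presuppose $t^{+}=s^{+}$. A secondary issue you flag but leave unresolved --- reconciling the interval $[0,t_{\phi})$ on which (K5) gives convergence with the times $t^{+}(\phi)$ and $s^{+}(\phi)$ --- should also be addressed explicitly, although the paper itself is terse on this point.
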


\begin{proof}

	\begin{itemize}
		\item[1)] Since $\mu<t^{+}(\phi)$, we have $\phi([0,\mu])\subset U$. Now, $U$
		is open and, by the uniform convergence of $\{\phi_{n}\}_{n\in\mathbb{N}}$, we
		find $n_{0}\in\mathbb{N}$ such that
		\[
		\phi_{n}([0,\mu])\subset U,\mbox{ for all }n\in\mathbb{N},\ n\geq n_{1}.
		\]
		Hence $\mu<t^{+}(\phi_{n})$, for all $n\geq n_{1}$. The strict inequality
		comes from the fact that either $t^{+}(\phi_{n})=+\infty$ or $t^{+}(\phi
		_{n})<+\infty$ and $\phi_{n}(t^{+}(\phi_{n}))\in\partial U$.
		
		For the second part, assume that there is a $\tau>0$, such that $s^{+}%
		(\phi)<\tau$. Then, by definition of $s^{+}(\phi)$, we can find $T\in
		(s^{+}(\phi),\tau)$ for which $\phi(T)\in X\setminus N$. The set $X\setminus
		N$ is open, since $N$ is closed. Hence, there is an open set $W\subset
		X\setminus N$, such that $\phi(T)\in W$. Since $\phi_{n}(T)\rightarrow\phi(T)$
		as $n\rightarrow+\infty$, we find a $n_{1}\in\mathbb{N}$ for which $\phi
		_{n}(T)\in W$, for all $n\geq n_{1}$. Therefore, $s^{+}(\phi_{n})\leq T<\tau$,
		for all $n\geq n_{1}$.
		
		\item[2)] Consider $x_{0}\in\mathcal{O}(K)\cap U$ and $\mu>g^{+}(x_{0})$. We
		want to show that we can find a neighborhood $W$ of $x_{0}$ in $\mathcal{O}%
		(K)\cap U$ such that, for each $z\in W$, we have $g^{+}(z)<\mu$. If this is
		not true, then we would find a sequence $\{y_{n}\}_{n\in\mathbb{N}}%
		\subset\mathcal{O}(K)\cap U$ such that $y_{n}\rightarrow x_{0}$ and
		$g^{+}(y_{n})\geq\mu$.
		
		Consider $\phi\in\mathbb{R}$, with $\phi(0)=x_{0}$ and $f^{+}(\phi)<\mu$.
		Using {(K5)}, we find a subsequence $y_{n_{k}}$ and $\phi_{k}\in\mathcal{R}$,
		with $\phi_{k}(0)=y_{n_{k}}$ and $\phi_{k}\rightarrow\phi$. {Since $f^{+}%
			(\phi)<\mu$, there is $t_{0}\in\mathbb{R}^{+},$ $t_{0}<t^{+}(\phi)$ such that
			$\frac{D(\phi(t_{0}))}{1+t_{0}}<\mu$. By item 1), there is $k_{0}\in
			\mathbb{N}$ such that $t_{0}<t^{+}(\phi_{k})$ and $\frac{D(\phi_{k}(t_{0}%
				))}{1+t_{0}}<\mu$, for all $k\geq k_{0}$. }Hence, for $k\geq k_{0}$, we have
		\[
		\mu\leq g^{+}(y_{n_{k}})\leq f^{+}(\phi_{k})\leq\frac{D(\phi_{k}(t_{0}%
			))}{1+t_{0}}<\mu
		\]
		which is a contradiction. Therefore, $g^{+}$ is upper semicontinuous at
		$x_{0}\in\mathcal{O}(K)\cap U$.
		
		We also want to show that there is an open neighborhood $W$ of $K$ with
		$W\subset\mathcal{O}(K)\cap U$ for which $g^{+}$ is lower semicontinuous in
		$W$. If this fact is not true, we find a sequence $\{x_{n}\}_{n\in\mathbb{N}%
		}\in\mathcal{O}(K)\cap U$ such that
		\begin{equation}
			d(x_{n},K)\rightarrow0,\ \mbox{ as }n\mbox{ goes to }+\infty, \label{eq:x_nK}%
		\end{equation}
		and $g^{+}$ is not lower semicontinuous at $x_{n}$, for all $n\in\mathbb{N}$.
		By Proposition \ref{RybTheo4.5} the set $K$ is compact, so that we may assume
		that $\{x_{n}\}_{n\in\mathbb{N}}$ converges to some $x_{0}\in K$ as $n$ goes
		to $+\infty$.
		
		Using Lemma \ref{lemma:gminus.not.ls}, for each $n\in\mathbb{N}$, we find:
		
		\begin{itemize}
			\item[i)] A sequence $\{x_{n}^{m}\}_{m\in\mathbb{N}}\subset\mathcal{O}(K)\cap
			U$ with $x_{n}^{m}\rightarrow x_{n}$ as $m$ goes to $+\infty$;
			
			\item[ii)] A sequence of solutions $\phi_{n}^{m}\in\mathcal{R}$, with
			$\phi_{n}^{m}(0)=x_{n}^{m}$, $m\in\mathbb{N}$, and such that $\phi_{n}%
			^{m}\rightarrow\phi_{n}$, for some $\phi_{n}\in\mathcal{R}$ with $\phi
			_{n}(0)=x_{n}$;
			
			\item[iii)] A sequence $\{t_{n}^{m}\}_{m\in\mathbb{N}}\in\mathbb{R}^{+}$ with
			$t^{+}(\phi_{n})<t_{n}^{m}<t^{+}(\phi_{n}^{m})$ and with $\frac{D(\phi_{n}%
				^{m}(t_{n}^{m}))}{1+t_{n}^{m}}<g^{+}(x_{n})$.
		\end{itemize}
		
		For each $n\in\mathbb{N}$, we denote $y_{n}=x_{n}^{m_{n}}$, $t_{n}%
		=t_{n}^{m_{n}}$, $\psi_{n}=\phi_{n}^{m_{n}}$, for some $m_{n}\in\mathbb{N}$,
		such that
		\[
		d(y_{n},x_{n})<2^{-n},\mbox{ }d(\psi_{n}(t^{+}(\phi_{n})),\partial U)<2^{-n},
		\]
		$t^{+}(\phi_{n})<t_{n}<t^{+}(\psi_{n})$ and with
		\begin{equation}
			\frac{D(\psi_{n}(t_{n}))}{1+t_{n}}<g^{+}(x_{n}). \label{eq:des.tn.g}%
		\end{equation}
		This is possible because $\phi_{n}(t^{+}(\phi_{n}))\in\partial U$,
		$n\in\mathbb{N}$, and $\phi_{n}^{m}(t^{+}(\phi_{n}))\rightarrow\phi_{n}%
		(t^{+}(\phi_{n}))$ as $m$ goes to $+\infty$.
		
		We have two possibilities: $t^{+}(\phi_{n})\rightarrow+\infty$ as
		$n\rightarrow+\infty$ or $\{t^{+}(\phi_{n})\}_{n\in\mathbb{N}}$ is bounded.
		
		First, suppose that $t^{+}(\phi_{n})\rightarrow+\infty$ as $n\rightarrow
		+\infty$.

		For all $n\in\mathbb{N}$, set $s_{n}=\frac{t^{+}(\phi_{n})}{2}$, and we have
		$\phi_{n}(s_{n})\in N$ and $s_{n}\rightarrow+\infty$ as $n$ goes to $+\infty$.
		By the admissibility of $N$ and Proposition \ref{RybTheo4.5}, we may assume
		that $\phi_{n}(s_{n})$ converges to some $y_{0}\in A^{-}(N)$. On the other
		hand, by {(K4)}, we may assume that $\eta_{n}\in\mathcal{R}$ given by
		$\eta_{n}(\cdot)=\phi_{n}(s_{n}+\cdot)$, $n\in\mathbb{N}$, converges to some
		$\eta\in\mathcal{R}$ with $\eta(0)=y_{0}$. Arguing as above, since $t^{+}%
		(\eta_{n})=s_{n}\rightarrow+\infty$, we have $s^{+}(\eta
			)=+\infty$ and $\eta(\mathbb{R}^{+})\subset N$. Thus $y_{0}\in A^{+}(N)$,
		which implies that $y_{0}\in K$.
		
		By \eqref{eq:des.tn.g} and the definition of $g^{+}$ we have, for each
		$n\in\mathbb{N}$,
		\begin{equation}
			D(\psi_{n}(t_{n}))<\frac{(1+t_{n})D(\phi(s_{n}))}{1+s_{n}}. \label{IneqDFin}%
		\end{equation}
		Observe that our choice of $\psi_{n}$ implies that $\psi_{n}([0,t^{+}(\phi
		_{n})])\subset N$, for all $n\in\mathbb{N}$, and, since $N$ is admissible, we
		have $\psi_{n}(t^{+}(\phi_{n}))\rightarrow z_{0}\in N$, and $z_{0}\in
		A^{-}(N)$, by Proposition \ref{RybTheo4.5}.
		
		We have two other possibilities: either $\{t_{n}-t^{+}(\phi_{n})\}_{n\in
			\mathbb{N}}$ is bounded or not.
		
		\begin{itemize}
			\item Suppose that $\{t_{n}-t^{+}(\phi_{n})\}_{n\in\mathbb{N}}$ is bounded
			and, without loss of generality, we may assume that as $n$ goes to $+\infty$,
			we have $t_{n}-t^{+}(\phi_{n})\rightarrow\tau_{0}$, for some $\tau_{0}%
			\in\mathbb{R}^{+}$.
			
			In that case we have that
			\[
			\frac{1+t_{n}}{1+s_{n}}=\frac{(1+t^{+}(\phi_{n}))+t_{n}-t^{+}(\phi_{n}%
				)}{1+\tfrac{t^{+}(\phi_{n})}{2}}%
			\]
			is uniformly bounded for $n\in\mathbb{N}$. Observe that $D(\phi_{n}%
			(s_{n}))\rightarrow D(y_{0})=0$ as $n\rightarrow+\infty$ and, then
				\eqref{IneqDFin} implies
			\begin{equation}
				D(\psi_{n}(t_{n}))\rightarrow0\mbox{ as }n\rightarrow+\infty.
				\label{eq:Dpsi.tn}%
			\end{equation}

			For each $n\in\mathbb{N}$, define $\varphi_{n}:\mathbb{R}^{+}\rightarrow X$
			given by $\varphi_{n}(\cdot)=\psi_{n}(t^{+}(\phi_{n})+\cdot)$, which belongs
			to $\mathcal{R}$, by {(K2)}. By {(K4)}, we may assume that
				$\varphi_{n}\rightarrow\varphi$ uniformly on compacts of $\mathbb{R}^{+}$, for
				some $\varphi\in\mathcal{R}$, with $\varphi(0)=z_{0}$. As a consequence,
				$\psi_{n}(t_{n})=\varphi_{n}(t_{n}-t^{+}(\phi_{n}))\rightarrow\varphi(\tau
				_{0})$ as $n$ goes to $+\infty$.  Using that $D$ is continuous and
			\eqref{eq:Dpsi.tn}, we have $D(\varphi(\tau_{0}))=0$ and, consequently,
			$\varphi(\tau_{0})\in K$. Also, $\varphi([0,\tau_{0}])\subset N$, by
			$\varphi_{n}([0,t_{n}-t^{+}(\phi_{n})])\subset N$ and by the argument in
			Proposition \ref{RybTheo4.5}. It is easy to see that $z_{0}\in A^{+}(N)$,
			using property (K3).
			
			Therefore, $z_{0}\in K\cap\partial U=\emptyset$, which is a contradiction.
			
			\item If $\{t_{n}-t^{+}(\phi_{n})\}_{n\in\mathbb{N}}$ is unbounded, we may
			assume that $t_{n}-t^{+}(\phi_{n})\rightarrow+\infty$ as $n\rightarrow+\infty$.
			
			For each $n\in\mathbb{N}$, define $\varphi_{n}\in\mathcal{R}$ as above, hence
			$\varphi_{n}([0,t_{n}-t^{+}(\phi_{n})])\subset N$. Again, by (K4), we may
			assume that $\varphi_{n}\rightarrow\varphi$ uniformly on compacts of
			$\mathbb{R}^{+}$, for some $\varphi\in\mathcal{R}$, with $\varphi(0)=z_{0}$.
			By the arguments in Proposition \ref{RybTheo4.5}, we have that $\varphi
			(\mathbb{R}^{+})\subset N$. Therefore, $z_{0}\in A^{+}(N)$ and we arrive at a
			contradiction since $z_{0}\in A^{-}(N)\cap\partial U$.
		\end{itemize}
		
		Thus, the only remaining possibility is that the sequence $\{t^{+}(\phi
		_{n})\}_{n\in\mathbb{N}}$ is bounded. Therefore, without loss of generality,
		we may assume that $t^{+}(\phi_{n})\rightarrow t_{0}$, for some $t_{0}%
		\in\mathbb{R}^{+}$.
		
		We can also prove that $\{t^{+}(\psi_{n})\}_{n\in\mathbb{N}}$ is bounded. In
		fact, if this is not true, by (K4), we could assume that $\psi_{n}%
		\rightarrow\psi$ uniformly on compacts of $\mathbb{R}^{+}$, for $\psi
		\in\mathcal{R}$, $\psi(0)=x_{0}$ and $s^{+}(\psi)=+\infty$, by
		the argument in Proposition \ref{RybTheo4.5}. Since $x_{0}\in K$,
		$\psi([0,+\infty))\subset N$, and $K$ is the largest weakly invariant set in
		this neighborhood, we conclude, using (K3), that $\psi([0,+\infty))\subset K$.
		On the other hand, by the choice of $y_{n}\in X$, $n\in\mathbb{N}$, and the
		uniform convergence of $\psi_{n}$ to $\psi$, we find that $\psi(t_{0}%
		)\in\partial U.$ This is a contradiction.
		
		Therefore, $\{t^{+}(\psi_{n})\}_{n\in\mathbb{N}}$ is also bounded and it may
		be assumed to be convergent to a point $t_{1}\in\mathbb{R}^{+}$. Since
		$t^{+}(\phi_{n})<t^{+}(\psi_{n})$, for all $n\in\mathbb{N}$, we have
		$t_{0}\leq t_{1}$.{ As a consequence, $\{t_{n}\}_{n\in\mathbb{N}}$ is bounded
			and we may assume $t_{n}\rightarrow\tau$, for some $\tau\in\lbrack t_{0}%
			,t_{1}]$. By \eqref{eq:x_nK}, we have that $g^{+}(x_{n})\rightarrow0$ which,
			together with \eqref{eq:des.tn.g}, implies ${D(\psi(\tau))}=0$. }
		%

		We will show that $\psi(t_{0})\in A^{+}(N)\cap A^{-}(N)=K$, but $\psi
		(t_{0})\in\partial U$ and that leads us to a contradiction. It is easy to see
		that $\psi(t_{0})\in A^{-}(N)$, since $\psi(0)=x_{0}\in K\subset A^{-}(N)$ and
		$\psi([0,t_{0}])\subset N$.
		Also $\psi(\tau)\in K\subset A^{+}(N)$, which means that we find $\psi_{1}%
		\in\mathcal{R}$, $\psi_{1}(0)=\psi(\tau)$, and $\psi_{1}(\mathbb{R}%
		^{+})\subset K$. Hence, by {(K2)} and {(K3)}, the map $\xi:\mathbb{R}%
		^{+}\rightarrow X$ given by
		\[
		\xi(t)=%
		\begin{cases}
			\psi(t_{0}+t),\ \mbox{ if }t\in\lbrack0,\tau-t_{0}]\\
			\psi_{1}(t-\tau+t_{0})\mbox{ if }t\geq\tau-t_{0}%
		\end{cases}
		\]
		belongs to $\mathcal{R}$, $\xi(0)=\psi(t_{0})$ and $\xi(\mathbb{R}^{+})\subset
		N$. Consequently, $\psi(t_{0})\in A^{+}(N)$. So, we have a contradiction.
		
		Therefore, there exists an open neighborhood $W$ of $K$ with $W\subset
		\mathcal{O}(K)\cap U$ for which the restriction of $g^{+}$ to $W$ is also
		lower semicontinuous.
		
		\item[3)] Suppose, by contradiction, that $g^{-}$ is not upper semicontinuous
		in $N$. Then, for some $x_{0}\in N$, we could find $\mu>0$ and a sequence
		$\{x_{n}\}_{n\in\mathbb{N}}\subset N$ with $x_{n}\rightarrow x_{0}$ and
		$g^{-}(x_{0})<\mu<g^{-}(x_{n})$, for all $n\in\mathbb{N}$.
		
		Then, for each $n\in\mathbb{N}$, there are $\phi_{n}\in\mathcal{N}$, $\phi
		_{n}(0)=x_{n}$, with $f^{-}(\phi_{n})>\mu$. We also find $\{t_{n}%
		\}_{n\in\mathbb{N}}\in\mathbb{R}^{+}$ such that $t_{n}\in\lbrack0,s^{+}%
		(\phi_{n}))$ and $\alpha(t_{n})F(\phi_{n}(t_{n}))>\mu$, for all $n\in
		\mathbb{N}$. We may assume, w.l.g., that $\phi_{n}(t)\rightarrow\phi(t)$, for
		$t$ on compacts of $\mathbb{R}^{+}$, for some $\phi\in\mathcal{N}$ with
		$\phi(0)={x_{0}}$.
		
		There are two possibilities: $\{t_{n}\}_{n\in\mathbb{N}}$ is bounded or
		$t_{n}\rightarrow+\infty$.
		
		In the first case, we may assume that $t_{n}$ converges to some $t_{0}%
		\in\mathbb{R}^{+}$ as $n$ goes to $+\infty$. Thus, $\mu\leq\alpha(t_{0}%
		)F(\phi(t_{0}))$. By the hypothesis, it follows that $\phi(t_{0})\notin N$,
		which cannot happen since $N$ is closed and $\{\phi_{n}(t_{n})\}_{n\in
			\mathbb{N}}\in N$ converges to $\phi(t_{0})$ as $n\rightarrow+\infty$.
		
		Now, if we assume that $t_{n}\rightarrow+\infty$, then $s^{+}(\phi)=+\infty$.
		Taking a subsequence if necessary, we have $\phi_{n}(t_{n})\rightarrow y$, for
		some $y\in A^{-}(N)$, by Proposition \ref{RybTheo4.5}. As a consequence,
		$\alpha(t_{n})F(\phi_{n}(t_{n}))\rightarrow0$ as $n\rightarrow+\infty$. But
		this is a contradiction with the hypothesis $\mu<\alpha(t_{n})F(\phi_{n}%
		(t_{n}))$, for all $n\in\mathbb{N}$.
		
		Therefore, we have shown the upper semicontinuity of $g^{-}$ in $N$.
		
		Now, we want to show that $g^{-}$ is lower semicontinuous in any neighborhood
		$W$ of $K$ in $\mathcal{O}(K)\cap U$ for which $t^{+}(\phi)=s^{+}(\phi)$, for
		all $\phi\in\mathcal{U}$ with $\phi(0)\in W$. Consider any neighborhood $W$
		satisfying the required conditions. Assume, by contradiction, that we can find
		$\mu>0$, $x\in W$ and a sequence $\{x_{n}\}_{n\in\mathbb{N}}\in W$ with
		$x_{n}\rightarrow x$ as $n\rightarrow+\infty$ and
		\begin{equation}
			g^{-}(x_{n})\leq\mu<g^{-}(x). \label{eq:ls.gminus}%
		\end{equation}

		Hence, there is $\phi\in\mathcal{R}$, $\phi(0)=x$ such that $\mu<f^{-}(\phi)$.
		By definition of $f^{-}$ and by $t^{+}(\phi)=s^{+}(\phi)$, we find $\tau
		\in(0,s^{+}(\phi))$ such that $\phi([0,\tau])\subset U$ and $\alpha
		(\tau)F(\phi(\tau))>\mu$. Using {(K5)}, we may assume that there exists a
		sequence $\phi_{n}\in\mathcal{R}$, $\phi_{n}(0)=x_{n}$ such that $\phi
		_{n}\rightarrow\phi$ uniformly on compacts of $\mathbb{R}^{+}$. The continuity
		of $F$ assures that we can find $n_{0}\in\mathbb{N}$, such that, for all
		$n\geq n_{0}$, $\phi_{n}([0,\tau])\subset U$ and
		\[
		\alpha(\tau)F(\phi_{n}(\tau))>\mu.
		\]
		On the other hand, by \eqref{eq:ls.gminus} it follows that
		\[
		\mu<\alpha(\tau)F(\phi_{n}(\tau))\leq f^{-}(\phi_{n})\leq g^{-}(x_{n})\leq
		\mu,
		\]
		which is a contradiction.
	\end{itemize}
\end{proof}

\begin{lemma}
	\label{lemma:convK} Assume (K1)-(K4). Let $K\neq\emptyset$ be a closed
	isolated weakly invariant set. Suppose that $N$ is a closed $G$-admissible
	isolating neighborhood of $K$. Assume that we have a sequence $\{x_{n}%
	\}_{n\in\mathbb{N}}\in U$ such that $g^{+}(x_{n})\rightarrow0$ and
	$g^{-}(x_{n})\rightarrow0$ as $n\rightarrow+\infty$. Then we find a
	subsequence $\{x_{n_{m}}\}_{m\in\mathbb{N}}$ and $x\in K$ such that $x_{n_{m}%
	}\rightarrow x$ as $m\rightarrow+\infty$.
\end{lemma}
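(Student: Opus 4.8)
The plan is to split the argument in two phases: first use the hypothesis $g^{-}(x_{n})\to 0$ to extract a subsequence of $\{x_{n}\}$ converging to some point $x\in A^{-}(N)$, and then use $g^{+}(x_{n})\to 0$ to show that this same $x$ belongs to $A^{+}(N)$, whence $x\in A^{+}(N)\cap A^{-}(N)=K$.

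For the first phase I would observe that, since $\alpha(t)\geq 1$ for all $t$, for every $n$ and every $\phi\in\mathcal{R}$ with $\phi(0)=x_{n}$ (which exists by (K1), and lies in $\mathcal{N}$ because $x_{n}\in N$) one has $g^{-}(x_{n})\geq\alpha(0)F(\phi(0))\geq F(x_{n})$. Hence $F(x_{n})\to 0$, and since $F(x)=\min\{1,d(x,A^{-}(N))\}$ this forces $d(x_{n},A^{-}(N))\to 0$. By Proposition \ref{RybTheo4.5}(b3) the set $A^{-}(N)$ is compact, so passing to a subsequence (which I keep calling $\{x_{n}\}$) there is $x\in A^{-}(N)$ with $x_{n}\to x$; moreover $x\in N$ since $N$ is closed.

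For the second phase, using $g^{+}(x_{n})\to 0$ I would choose, for each $n$, a solution $\phi_{n}\in\mathcal{U}$ with $\phi_{n}(0)=x_{n}$ and $f^{+}(\phi_{n})<g^{+}(x_{n})+\tfrac1n$, and then $t_{n}\in[0,t^{+}(\phi_{n}))$ with $\frac{D(\phi_{n}(t_{n}))}{1+t_{n}}<g^{+}(x_{n})+\tfrac2n$, so that $\frac{D(\phi_{n}(t_{n}))}{1+t_{n}}\to 0$ and $\phi_{n}([0,t_{n}])\subset U\subset N$. By (K4), after a further subsequence, $\phi_{n}\to\phi$ uniformly on compacts of $\mathbb{R}^{+}$ for some $\phi\in\mathcal{R}$ with $\phi(0)=x$, and one may assume $t_{n}\to t_{0}\in[0,+\infty]$. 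If $t_{0}=+\infty$, then for every $T>0$ we have $\phi_{n}([0,T])\subset N$ for $n$ large, hence $\phi([0,T])\subset N$; thus $\phi(\mathbb{R}^{+})\subset N$ and $x\in A^{+}(N)$. If $t_{0}<+\infty$, then passing to the limit for $t<t_{0}$ and using that $N$ is closed gives $\phi([0,t_{0}])\subset N$, while continuity of $D$ yields $\frac{D(\phi(t_{0}))}{1+t_{0}}=\lim_{n}\frac{D(\phi_{n}(t_{n}))}{1+t_{n}}=0$, so $\phi(t_{0})\in K$; since $K\subset A^{+}(N)$ there is $\psi\in\mathcal{R}$ with $\psi(0)=\phi(t_{0})$ and $\psi(\mathbb{R}^{+})\subset N$, and concatenating $\phi$ (up to time $t_{0}$) with $\psi$ via (K3) produces $\eta\in\mathcal{R}$ with $\eta(0)=x$ and $\eta(\mathbb{R}^{+})\subset N$, so again $x\in A^{+}(N)$. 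In either case $x\in K$ and $x_{n}\to x$ along the chosen subsequence.

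The steps are essentially routine given the earlier results; the only points that need care are that the near-minimizing solutions $\phi_{n}$ for $g^{+}$ must be chosen along the same subsequence on which $x_{n}\to x$ (harmless, since $g^{+}(x_{n})\to 0$ along the original sequence), and the finite-time alternative $t_{0}<+\infty$, where one converts ``the limit solution reaches $K$ while remaining in $N$'' into ``$x\in A^{+}(N)$'' through the concatenation property (K3) — exactly the device already used to prove that $g^{+}(x)=0$ implies $x\in A^{+}(N)$, so no new idea is required.
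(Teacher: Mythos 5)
Your proof is correct and follows essentially the same route as the paper's: you first extract a convergent subsequence from $F(x_n)\leq g^{-}(x_n)\to 0$ together with the compactness of $A^{-}(N)$ from Proposition \ref{RybTheo4.5}(b3), and then use near-minimizing solutions for $g^{+}$ together with (K4) to place the limit point in $A^{+}(N)$, concluding $x\in A^{+}(N)\cap A^{-}(N)=K$. The only cosmetic differences are that you split on whether the chosen times $t_n$ converge to a finite limit while the paper splits on whether $\{t^{+}(\phi_n)\}$ is bounded, and that in the finite-time case you spell out the concatenation via (K3) that the paper leaves implicit.
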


\begin{proof}
	By the definition of $g^{+}$, we can find $\phi_{n} \in\mathcal{U}_{x_{n}}$,
	such that $f^{+}(\phi_{n})\rightarrow0$ as $n \rightarrow+\infty$.
	
	Now, by definition of $g^{-}$, $f^{-}(\phi_{n})\rightarrow0$ and,
	consequently, $d(x_{n}, A^{-}(N))=F(x_{n})\rightarrow0$ as $n \rightarrow
	+\infty$. Since $A^{-}(N)$ is compact, by Proposition
		\ref{RybTheo4.5}, we may assume that $x_{n}\rightarrow x$, for some $x \in
	A^{-}(N)$.
	
	We have two possibilities: either $\{t^{+}(\phi_{n})\}_{n\in\mathbb{N}}$ is
	bounded or it is unbounded.
	
	\begin{itemize}
		\item[i)] Suppose that $t^{+}(\phi_{n})<M$ for all $n\in\mathbb{N}$, and some
		$M>0$. Then, for each $n\in\mathbb{N}$, there is $t_{n}\in\lbrack0,t^{+}%
		(\phi_{n})]$ such that
		\begin{equation}
			f^{+}(\phi_{n})\geq\frac{\inf\{D(\phi_{n}(t)):t\in\lbrack0,t^{+}(\phi_{n}%
				)]\}}{1+M}=\frac{D(\phi_{n}(t_{n}))}{1+M}. \label{eq:fandM}%
		\end{equation}

		Without loss of generality $t_{n}\rightarrow t_{0}<\infty$, for some $t_{0}%
		\in\mathbb{R}^{+}$. By {(K4)} and Proposition \ref{RybTheo4.5}, there is
		$\phi\in\mathcal{N}_{x}$ with $\phi([0,t_{0}])\subset N$, for which $\phi
		_{n}\rightarrow\phi$ uniformly on compacts of $\mathbb{R}^{+}$. Since
		$D(\phi_{n}(t_{n}))\rightarrow D(\phi(t_{0}))$ as $n\rightarrow+\infty$ by
		\eqref{eq:fandM}, we obtain $D(\phi(t_{0}))=0$ and $\phi(t_{0})\in K$.
		
		In particular, it follows that $x \in A^{+}(N)$. Thus $x \in K$.
		
		\item[ii)] If $\{t^{+}(\phi_{n})\}_{n\in\mathbb{N}}$ is unbounded. We may
		assume that $t^{+}(\phi_{n})\rightarrow+\infty$ and, by point (a2) in
		Proposition \ref{RybTheo4.5}, we have $x\in A^{+}(N)$. Therefore, $x\in
		A^{-}(N)\cap A^{+}(N)=K$, as desired.
	\end{itemize}
\end{proof}

\begin{lemma}
	\label{lemma:Hepsilon} Let $K\neq\emptyset$ be a closed isolated weakly
	invariant set. {Assume (K1)-(K5). }Suppose that $N$ is a closed $G$-admissible
	isolating neighborhood of $K$. Consider $\varepsilon>0$ and the set
	\[
	H_{\varepsilon}=\{x\in U\cap\mathcal{O}(K):g^{+}(x)<\varepsilon,\ g^{-}%
	(x)<\varepsilon\}.
	\]

	Then $H_{\varepsilon}$ is open in $U\cap\mathcal{O}(K)$. We can choose
	$\varepsilon>0$ small enough such that $clH_{\varepsilon}\subset
		U\cap O(K)$, $g^{+}$ is continuous on $clH_{\varepsilon}$ and
	$clH_{\varepsilon}$ is an isolating neighborhood of $K$.
\end{lemma}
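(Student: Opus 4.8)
The plan is to prove the four assertions in order: openness of $H_{\varepsilon}$ is immediate from semicontinuity, while the other three statements all reduce to a single compactness principle extracted from Lemma~\ref{lemma:convK}. For openness, write
$H_{\varepsilon}=\{x\in U\cap\mathcal{O}(K):g^{+}(x)<\varepsilon\}\cap\{x\in U\cap\mathcal{O}(K):g^{-}(x)<\varepsilon\}$.
By Proposition~\ref{prop:properties.g}(2), $g^{+}$ is upper semicontinuous on $U\cap\mathcal{O}(K)$, so the first set is open in $U\cap\mathcal{O}(K)$; by Proposition~\ref{prop:properties.g}(3), $g^{-}$ is upper semicontinuous on $N\supset U\cap\mathcal{O}(K)$, so the second set is open in $U\cap\mathcal{O}(K)$. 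Hence $H_{\varepsilon}$ is open in $U\cap\mathcal{O}(K)$, and since $U\cap\mathcal{O}(K)$ is itself open in $X$ (intersection of the open sets $U=int(N)$ and $\mathcal{O}(K)$), $H_{\varepsilon}$ is open in $X$.

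Next I record two facts. First, $K\subset H_{\varepsilon}$ for every $\varepsilon>0$: indeed $K\subset U\cap\mathcal{O}(K)$, and for $x\in K$ the complete trajectory $\phi$ of $\mathcal{R}$ with $\phi(\mathbb{R})\subset K$ restricts to an element of $\mathcal{U}_{x}$ along which $D\equiv0$ and $t^{+}=+\infty$, so $g^{+}(x)=0$, while $g^{-}(x)=0$ because $K\subset A^{-}(N)$ (recall $g^{-}$ vanishes exactly on $A^{-}(N)$). Second, $\varepsilon\mapsto H_{\varepsilon}$ and $\varepsilon\mapsto clH_{\varepsilon}$ are nondecreasing, so it suffices to exhibit one small $\varepsilon$. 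The core claim is: for every open $V\subset X$ with $K\subset V$ there is $\varepsilon_{V}>0$ with $clH_{\varepsilon_{V}}\subset V$. Suppose not; then for each $n$ there is $x_{n}\in clH_{1/n}\setminus V$, and since $x_{n}\in clH_{1/n}$ we may choose $y_{n}\in H_{1/n}$ with $d(x_{n},y_{n})<1/n$. Then $y_{n}\in U$, $g^{+}(y_{n})<1/n$ and $g^{-}(y_{n})<1/n$, so Lemma~\ref{lemma:convK} gives a subsequence with $y_{n_{m}}\to x^{*}$ for some $x^{*}\in K\subset V$; consequently $x_{n_{m}}\to x^{*}$ as well, and since $V$ is open this forces $x_{n_{m}}\in V$ for large $m$, contradicting $x_{n_{m}}\notin V$.

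To conclude, apply the core claim with $V=U\cap\mathcal{O}(K)$ to obtain $\varepsilon_{0}>0$ with $clH_{\varepsilon_{0}}\subset U\cap\mathcal{O}(K)$, and with $V=W_{0}$, where $W_{0}\subset\mathcal{O}(K)\cap U$ is the open neighbourhood of $K$ on which $g^{+}$ is continuous supplied by Proposition~\ref{prop:properties.g}(2), to obtain $\varepsilon_{1}>0$ with $clH_{\varepsilon_{1}}\subset W_{0}$. Set $\varepsilon=\min\{\varepsilon_{0},\varepsilon_{1}\}$. Then $clH_{\varepsilon}\subset U\cap\mathcal{O}(K)$, and $clH_{\varepsilon}\subset W_{0}$ yields continuity of $g^{+}$ on $clH_{\varepsilon}$. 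Finally $clH_{\varepsilon}$ is closed; by the first recorded fact and openness of $H_{\varepsilon}$ we have $K\subset H_{\varepsilon}=intH_{\varepsilon}\subset int(clH_{\varepsilon})$; and since $clH_{\varepsilon}\subset U\subset N$ and $K$ is the maximal (isolated) weakly invariant set in the isolating neighbourhood $N$, every weakly invariant subset of $clH_{\varepsilon}$ lies in $K$, whereas $K$ is itself weakly invariant and contained in $clH_{\varepsilon}$; hence $K$ is the maximal isolated weakly invariant set in $clH_{\varepsilon}$, so $clH_{\varepsilon}$ is an isolating neighbourhood of $K$.

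The only step with genuine content is the core claim, which is essentially a repackaging of Lemma~\ref{lemma:convK}; the delicate point is that Lemma~\ref{lemma:convK} requires its sequence to lie in $U$, which is exactly why one does not argue directly with points of $clH_{1/n}$ but passes to nearby points $y_{n}\in H_{1/n}\subset U$.
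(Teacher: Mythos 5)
Your proof is correct and follows essentially the same route as the paper: openness from the upper semicontinuity of $g^{+}$ and $g^{-}$, and the containment $clH_{\varepsilon}\subset W_{0}\subset U\cap\mathcal{O}(K)$ obtained by contradiction via Lemma \ref{lemma:convK}, approximating closure points of $H_{1/n}$ by interior points so that the lemma's hypothesis (a sequence in $U$) applies. The only cosmetic difference is that you invoke the core claim twice and take a minimum, whereas the paper applies it once to the continuity neighbourhood $W\subset U\cap\mathcal{O}(K)$, which yields both containments simultaneously.
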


\begin{proof}
	Since, by Proposition \ref{prop:properties.g}, both $g^{+}$ and $g^{-}$ are
	upper semicontinuous in $U\cap\mathcal{O}(K)$, $H_{\varepsilon}$ is open for
	every $\varepsilon>0$. Observe that $K\subset H_{\varepsilon}$ since
	$g^{+}(x)=g^{-}(x)=0$, for all $x\in K$.
	
	Consider $W\subset U\cap\mathcal{O}(K)$, an open neighborhood of $K$ for which
	$g^{+}$ is continuous in $W$, whose existence is assured by Proposition
	\ref{prop:properties.g}. We want to show that there is
		$\varepsilon>0$ such that $clH_{\varepsilon}\subset W$. If this was not true,
		then we would find sequences $\{\varepsilon_{n}\}_{n\in\mathbb{N}}%
		\in\mathbb{R}^{+}$, $\{y_{n}\}_{n\in\mathbb{N}}\in X$, with $y_{n}\in
		clH_{\varepsilon_{n}}\setminus W$, for all $n\in\mathbb{N}$, and
		$\varepsilon_{n}\rightarrow0$ as $n\rightarrow+\infty$. For each
		$n\in\mathbb{N}$, take $x_{n}\in H_{\varepsilon_{n}}$ with $d(x_{n}%
		,y_{n})<\varepsilon_{n}$. It follows that {\ $g^{+}(x_{n})$ and $g^{-}(x_{n})$
			go to $0$ as $n\rightarrow+\infty$.} By Lemma \ref{lemma:convK}, we may assume
		that $x_{n}\rightarrow x\in K$ and then $y_{n}\rightarrow x$. We thus obtain
		that $x\in K$ but $x\not \in W$, which is a contradiction since $K\subset W$.
		Therefore, $g^{+}$ is continuous in $clH_{\varepsilon}$. 
		
		Finally, as $K\subset H_{\varepsilon}\subset N$, the largest weakly invariant
		subset of $H_{\varepsilon}$ contains $K$ and it must be inside $N$, hence it
		must be $K$.
\end{proof}

\begin{theorem}
	\label{theo:Isol.Block.(K1)-(K5)} Let $K\neq\emptyset$ be a closed isolated
	weakly invariant set. {Suppose that $\mathcal{R}$ satisfies (K1)-(K5) and that
		there is a closed isolating neighborhood $N$ of $K$ which is $G$-admissible.
	}Then there exists an isolating block $B$ with $K\subset B\subset N$.
\end{theorem}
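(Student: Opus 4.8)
The plan is to follow Rybakowski's classical construction, but packaged around the two auxiliary functions $g^{+}$ and $g^{-}$ whose monotonicity and semicontinuity properties have already been established. The block will be obtained as the closure of a suitable sublevel set of the pair $(g^{+},g^{-})$, intersected with the isolating neighborhood. Concretely, I would first invoke Lemma \ref{lemma:Hepsilon} to fix $\varepsilon>0$ small enough that $clH_{\varepsilon}\subset U\cap\mathcal{O}(K)$, that $g^{+}$ is continuous on $clH_{\varepsilon}$, and that $clH_{\varepsilon}$ is an isolating neighborhood of $K$. One must also shrink $\varepsilon$ (using Proposition \ref{prop:properties.g}(3)) so that on the open neighborhood $W$ of $K$ on which $g^{-}$ is continuous we have $t^{+}(\phi)=s^{+}(\phi)$ for all $\phi\in\mathcal{U}$ with $\phi(0)\in W$, and arrange $clH_{\varepsilon}\subset W$ so that $g^{-}$ is continuous on $clH_{\varepsilon}$ as well. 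Then I would define the candidate block as
\[
B=\{x\in clH_{\varepsilon}: g^{+}(x)+g^{-}(x)\le\varepsilon\}\quad\text{or, following Rybakowski, }\ B=\{x\in clH_{\varepsilon}: g^{-}(x)\le c\,g^{+}(x)\ \text{or similar}\},
\]
choosing the precise defining inequality so that the level set is transverse to the flow in the monotonicity sense. Since $g^{\pm}$ are continuous on $clH_{\varepsilon}$, $B$ is closed; since $g^{\pm}$ vanish on $K$, we get $K\subset int(B)$; and since $B\subset clH_{\varepsilon}\subset N$ with $K$ the maximal weakly invariant set in $N$, $B$ is an isolating neighborhood of $K$.

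The heart of the argument is the boundary analysis: one must show $\partial B=B^{-}\cup B^{i}$ with $B^{-}=B^{e}\cup B^{b}$ closed. The decomposition of $\partial B$ is dictated by which defining inequality is active: on the portion where $g^{-}(x)=\varepsilon$ (or the corresponding active constraint), strict monotonicity of $g^{-}$ along solutions (Proposition \ref{prop:monot.gminus}) forces solutions to leave $B$ immediately in forward time, so these are egress or bounce-off points; on the portion where $g^{+}(x)=\varepsilon$, strict monotonicity of $g^{+}$ (Proposition \ref{prop:monot.gplus}) forces solutions to enter $int(B)$, giving ingress points. The subtle multivalued point is that the egress/ingress conditions in Definition \ref{def:point_eiboff} are quantified over \emph{all} solutions $\sigma$ through $x$ (both forward pieces and, when $\delta_{1}>0$, backward pieces), so one cannot argue along a single trajectory; here the concatenation property (K3) and the translation property (K2), together with the fact that the monotonicity statements hold for \emph{every} solution segment, are exactly what is needed to handle every branch uniformly. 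For the backward condition ($\delta_1>0$ case), one uses that if $x$ lies on the $g^{-}=\varepsilon$ face then any solution reaching $x$ had $g^{-}<\varepsilon$ strictly before (by strict monotonicity), hence was in $B$ (or outside, for bounce-off, depending on the $g^{+}$ value), as required.

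The closedness of $B^{-}$ is where I expect the main technical obstacle, and it is the step that most genuinely uses the multivalued admissibility. The issue is that $B^{-}$ is characterized dynamically (points from which solutions exit $B$ immediately), not by a single level set, so a limit of exit points must be shown to be an exit point; a sequence of solutions exhibiting immediate exit at $x_n\to x$ must, via (K5) and $\{G_n\}$-admissibility through Proposition \ref{RybTheo4.5}, yield a limiting solution through $x$ that still exits — but one must rule out the degenerate possibility that the exit "time" collapses to zero in a way that makes $x$ a point of $B^{i}$ instead. This is handled by the strict monotonicity estimates $g^{+}(x)+\mu_0 t\le g^{+}(y)$ and $g^{-}(x)\ge g^{-}(y)+\delta$ from Propositions \ref{prop:monot.gplus} and \ref{prop:monot.gminus}, which give quantitative rather than merely strict monotonicity and therefore survive the limit; combining this with the continuity of $g^{\pm}$ on $clH_{\varepsilon}$ closes the gap. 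Finally I would verify $\partial B\subset B^{-}\cup B^{i}$ by checking that no boundary point can be an egress point in the forward sense while also being a limit of interior points on the $g^{-}$-side without being caught by the bounce-off case — i.e. that the three types exhaust $\partial B$ — which again follows from the trichotomy forced by whether $g^{+}$ or $g^{-}$ (or both) attains the active value at the boundary point, together with Lemma \ref{lemma:convK} to control points where both $g^{+}$ and $g^{-}$ are small.
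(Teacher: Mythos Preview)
Your overall strategy---build the block as a sublevel set of $(g^{+},g^{-})$ and classify boundary points via monotonicity---is the right one, but the execution has two concrete gaps.

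First, you have the roles of $g^{+}$ and $g^{-}$ reversed. By Proposition~\ref{prop:monot.gplus}, $g^{+}$ is \emph{increasing} along orbits, so the face $\{g^{+}=\delta\}$ is the \emph{exit} set (egress/bounce-off); by Proposition~\ref{prop:monot.gminus}, $g^{-}$ is \emph{decreasing}, so $\{g^{-}=\delta\}$ is the \emph{ingress} set. Your boundary analysis states the opposite. Moreover, the backward clause in the egress verification (that $\sigma(t)\in B$ for small $t<0$) does not follow from monotonicity alone: at an egress point $x$ one has $g^{-}(x)<\delta$, but monotonicity of $g^{-}$ gives $g^{-}(\sigma(t))\geq g^{-}(x)$ for $t<0$, which is the wrong direction. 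One genuinely needs \emph{continuity} of $g^{-}$ here, and that is precisely what is not available on $clH_{\varepsilon}$ without further work.

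Second, and this is the structural gap, you cannot obtain a neighborhood $W$ on which $t^{+}(\phi)=s^{+}(\phi)$ simply by ``shrinking $\varepsilon$''; Proposition~\ref{prop:properties.g}(3) is conditional on such a $W$ existing, it does not produce one. The paper's mechanism is a two-step iteration: first apply Lemma~\ref{lemma:Hepsilon} to obtain $\tilde U=H_{\varepsilon}$, $\tilde N=clH_{\varepsilon}$, then \emph{redefine} $\tilde t^{+},\tilde s^{+},\tilde g^{\pm}$ relative to $(\tilde U,\tilde N)$ and \emph{prove} that $\tilde t^{+}(\phi)=\tilde s^{+}(\phi)$ on $\tilde U$, using the strict monotonicity of the original $g^{\pm}$ together with the continuity of $g^{+}$ on $clH_{\varepsilon}$. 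Only after this does Proposition~\ref{prop:properties.g}(3) give continuity of $\tilde g^{-}$, and the block is then $B=cl\tilde H_{\delta}$ for a second, smaller $\delta<\varepsilon$ obtained from a second application of Lemma~\ref{lemma:Hepsilon}. Your single-pass candidate $B=\{g^{+}+g^{-}\leq\varepsilon\}$ (or a ratio) built from the original $g^{\pm}$ does not yield this continuity and its boundary does not decompose cleanly into faces where one of $g^{+},g^{-}$ equals the threshold.

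Finally, the closedness of $B^{-}$ is much simpler than you anticipate: with the correct setup one has $B^{-}=b^{-}\cup b^{\star}=\{x\in\partial B:\tilde g^{+}(x)=\delta\}$, which is closed directly by the continuity of $\tilde g^{+}$ on $B$. No limiting-solutions argument or fresh use of (K5)/admissibility is needed at this final stage; those hypotheses were already absorbed into the continuity results for $\tilde g^{\pm}$.
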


\begin{proof}
	Choose $\varepsilon>0$, the number provided in Lemma
		\ref{lemma:Hepsilon}, and put $\tilde{U}=H_{\varepsilon}$ and $\tilde
		{N}=clH_{\varepsilon}\subset U\cap\mathcal{O}(K)$.
	
	Define the functions $\tilde{t}^{+}$, $\tilde{s}^{+}$ and $\tilde{g}^{+},
	\tilde{g}^{-}$ as before, with $U$ (resp. $N$) replaced by $\tilde{U}$ (resp.
	$\tilde{N} $).
	
	Observe that all the previous results can be applied to the functions defined
	above. It is also easy to see that $\tilde{N}$ is admissible.
	
	We want to show that $\tilde{t}^{+}(\phi)=\tilde{s}^{+}(\phi)$, for every
	$\phi\in\mathcal{R}$, with $\phi(0)\in\tilde{U}$. Clearly, $\tilde{t}^{+}%
	(\phi)\leq\tilde{s}^{+}(\phi)$, for every $\phi\in\mathcal{R}$, with
	$\phi(0)\in\tilde{U}$. Suppose, by contradiction, that we can find $x
	\in\tilde{U}$ and $\psi\in\mathcal{R}$ with $\psi(0)=x$ and such that
	$\tilde{t}^{+}(\psi)<\tilde{s}^{+}(\psi)$.
	
	We have $y=\psi(\tilde{t}^{+}(\psi))\in\partial\tilde{U}\subset U\cap
	\mathcal{O}(K)$. Hence, either $g^{+}(y)\geq\varepsilon$ or $g^{-}%
	(y)\geq\varepsilon$. Since $x\in\tilde{U}$ and $g^{-}(x)\geq g^{-}(y)$, by
	Proposition \ref{prop:monot.gminus}, it follows that the last scenario above
	cannot happen. So, necessarily $g^{+}(y)\geq\varepsilon$. Now as $y\in
	\partial\tilde{U}$, we find a sequence $\{y_{n}\}_{n\in\mathbb{N}}\in\tilde
	{U}$ with $y_{n}\rightarrow y$ as $n\rightarrow+\infty$. Hence, $g^{+}%
	(y_{n})<\varepsilon$ and then, by the continuity of $g^{+}$ in
		$clH_{\varepsilon}$, we have $g^{+}(y)=\varepsilon$.
	
	Choose $t\in(\tilde{t}^{+}(\psi),\tilde{s}^{+}(\psi)),$ so $\phi
	([0,t])\subset\tilde{N}$. Then, we have $g^{+}(\phi(t))\leq\varepsilon$. On
	the other hand, the strict inequality property of $g^{+}$ along orbits in $U$
	implies that $g^{+}(\phi(t))>g^{+}(y)=\varepsilon$, which is a contradiction.
	
	Therefore, $\tilde{t}^{+}(\phi)=\tilde{s}^{+}(\phi)$, for all $\phi
	\in\mathcal{R}$, with $\phi(0)\in\tilde{U}$. Consequently, by Proposition
	\ref{prop:properties.g}, we conclude that $\tilde{g}^{-}$ is continuous in
	$\tilde{U}$.
	
	For $\delta\in(0,\varepsilon)$, define
	\[
	B=cl\tilde{H}_{\delta}=cl\{x\in\tilde{U}:\tilde{g}^{+}(x)<\delta,\ \tilde
	{g}^{-}(x)<\delta\}.
	\]

	Applying Lemma \ref{lemma:Hepsilon} to {$\tilde{U}$ and
			$\tilde{N}$ we obtain }$\delta<\varepsilon$ such that: 
		
		\begin{itemize}
			\item $\tilde{H}_{\delta}$ is open in $U\cap\mathcal{O}(K)$, $K\subset
			\tilde{H}_{\delta}$ and $cl\tilde{H}_{\delta}\subset U\cap\mathcal{O}(K);$
			
			\item $\tilde{g}^{+},\tilde{g}^{-}$ are continuous on $cl\tilde{H}_{\delta}.$
		\end{itemize}
		
		Let us show that $B\subset H_{\varepsilon}$. In fact, if $x\in B$, then there
		is a sequence $\{x_{n}\}_{n\in\mathbb{N}}\in H_{\delta}$ with $x_{n}%
		\rightarrow x$, as $n\rightarrow\infty$. Hence $\tilde{g}^{+}(x_{n})<\delta$
		and $\tilde{g}^{-}(x_{n})<\delta$. The continuity of $\tilde{g}^{+}$ and
		$\tilde{g}^{+}$ imply that $\tilde{g}^{+}(x),\tilde{g}^{-}(x)\leq
		\delta<\varepsilon$.
	
	Now, observe that $\partial B=b^{-}\cup b^{+}\cup b^{\star}$, where
	\[
	\begin{aligned} b^- = \{x \in \partial B : \tilde{g}^+(x)=\delta, \ \tilde{g}^-(x)<\delta\} ,\\ b^+ = \{x \in \partial B : \tilde{g}^+(x)<\delta, \ \tilde{g}^-(x)=\delta\}, \\ b^{\star} = \{x \in \partial B : \tilde{g}^+(x)=\delta, \ \tilde{g}^-(x)=\delta\}. \end{aligned}
	\]

	Consider a point $x\in\partial B$ and a function $\phi:[-\gamma_{1}%
	,+\infty)\rightarrow X$ such that $\phi(\cdot-\gamma_{1}%
		)\in\mathcal{R}$ with $\phi(0)=x$, where $\gamma_{1}\geq0$. We can choose
		constants $\tau_{1}\geq0$ and $\tau_{2}>0$ such that $\phi([-\tau_{1},\tau
		_{2}])\subset\tilde{U}$.
	
	\begin{itemize}
		\item Suppose that $x\in b^{-}$. By definition of $b^{-}$ and the monotonicity
		of $\tilde{g}^{+}$ and $\tilde{g}^{-}$ along orbits we have, for $t\in
		(0,\tau_{2}]$,
		\[
		\tilde{g}^{+}(\phi(t))>\tilde{g}^{+}(x)=\delta\mbox{ and }\tilde{g}^{-}%
		(\phi(t))\leq\tilde{g}^{-}(x)<\delta.
		\]
		Then $\phi((0,\tau_{2}])\subset X\setminus B$.
		
		On the other hand, if $\gamma_{1}>0$, then $\tau_{1}$ can be
			chosen positive as well. Since $\tilde{g}^{+}$ is continuous and $\phi
			(\cdot-{\tau_{1}})\in\mathcal{R}$, we find $\sigma_{1}\in(0,\tau_{1})$ such
		that $\tilde{g}^{+}(\phi(\sigma_{1}))\neq0$ and, by the monotonicity of
		$\tilde{g}^{+}$, we have that $\tilde{g}^{+}(\phi(t))<\tilde{g}^{+}(x)=\delta
		$, for all $t\in\lbrack-\sigma_{1},0)$. The continuity of $\tilde{g}^{-}$
		assures that there is a $\tau\in\lbrack-\sigma_{1},0)$ for which $\tilde
		{g}^{-}(\phi(t))<\delta$, for all $t\in\lbrack\tau,0)$.
		
		Hence, $\phi([\tau,0))\subset intB$.
		
		Therefore, each point of $b^{-}$ is an egress point, see Definition
		\ref{def:point_eiboff}.
		
		\item Suppose that $x\in b^{+}$. By the monotonicity of $\tilde{g}^{-}$ along
		orbits in $\tilde{U}$, we have, for $t\in(0,\tau_{2}]$,
		\[
		\tilde{g}^{-}(\phi(t))<\tilde{g}^{-}(x)=\delta,
		\]
		and, since $\tilde{g}^{+}(x)<\delta$, by the continuity of $\tilde{g}^{+}$, we
		find $\tau\in(0,\tau_{2}]$ such that\\ $\tilde{g}^{+}(\phi(t))<\delta$ for
		$t\in(0,\tau]$. Hence, $\phi((0,\tau])\subset intB$.
		
		Also, if $\tau_{1}>0$, by the monotonicity of $\tilde{g}^{-}$ and $\tilde
		{g}^{+}$ along orbits for $t\in\lbrack-\tau_{1},0)$, we have $\tilde{g}%
		^{+}(\phi(t))\leq\tilde{g}^{+}(x)<\delta$ and $\tilde{g}^{-}(\phi
		(t))>\tilde{g}^{-}(x)=\delta$. Then $\phi([-\tau_{1},0))\subset X\setminus B$.
		
		That means each point of $b^{+}$ is an ingress point.
		
		\item Suppose that $x\in b^{\star}$. By the monotonicity of $\tilde{g}^{+}$
		and $\tilde{g}^{-}$ along orbits in $\tilde{U}$, we have, for all $t\in
		(0,\tau_{2}]$, $\delta=\tilde{g}^{+}(x)<\tilde{g}^{+}(\phi(t))$ and
		$\delta=\tilde{g}^{-}(x)>\tilde{g}^{-}(\phi(t))$.
		
		Also, if $\tau_{1}\neq0$, $\delta=\tilde{g}^{+}(x)\geq\tilde
			{g}^{+}(\phi(t))$ and $\delta=\tilde{g}^{-}(x)<\tilde{g}^{-}(\phi(t))$, for
		all $t\in\lbrack-\tau_{1},0)$.
		
		Thus $\phi(t)\in X\setminus B$, for all $t\in\lbrack-\tau_{1},0)\cup
		(0,\tau_{2}]$.
		
		That means each point of $b^{\star}$ is a bounce-off point.
	\end{itemize}
	
	Therefore, $B$ is an isolating block since $B^{-}=b^{-}\cup b^{\star}$ is closed.
\end{proof}

\begin{theorem}
	\label{theo:exist_block_subsemiflow} Let $\widetilde{\mathcal{R}}%
	\supset\mathcal{R}$ be sets of functions satisfying $\left(  K1\right)
	-\left(  K4\right)  $ and let $\tilde{G}\supset G$ be their associated
	multivalued semiflows. Assume that $K$ is a closed isolated weakly invariant
	set for $G$ with the closed isolating $G$-admissible neighborhood $N$. Also,
	let $\widetilde{K}$ be a closed isolated weakly invariant set for
	$\widetilde{G}$ such that $K\subset\widetilde{K}$ and $N$ is an isolating
	$\widetilde{G}$-admissible neighborhood for $\widetilde{K}$ as well. Moreover,
	we suppose that $\widetilde{\mathcal{R}}$ satisfies $\left(  K5\right)  $ for
	$\widetilde{K}$. Then there is an isolating block $B$ for $K$.
\end{theorem}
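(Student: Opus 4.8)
The plan is to construct the block in the larger system and then check that it persists for the sub-semiflow. Since $\widetilde{\mathcal{R}}$ satisfies $(K1)$--$(K5)$ for $\widetilde{K}$ and $N$ is a closed isolating $\widetilde{G}$-admissible neighborhood of $\widetilde{K}$, Theorem \ref{theo:Isol.Block.(K1)-(K5)} applies to $\widetilde{G}$ and produces an isolating block $\widetilde{B}$ for $\widetilde{K}$ relative to $\widetilde{G}$ with $\widetilde{K}\subset\widetilde{B}\subset N$. I will verify that $B:=\widetilde{B}$ is an isolating block for $K$ relative to $G$. First, $B$ is an isolating neighborhood of $K$ for $G$: since $\widetilde{B}$ is an isolating neighborhood of $\widetilde{K}$ we have $K\subset\widetilde{K}\subset\mathrm{int}(\widetilde{B})$, while the maximal weakly invariant set of $G$ contained in $\widetilde{B}$ lies in $N$ (because $\widetilde{B}\subset N$), hence equals $K$: it is contained in the maximal such set in $N$, which is $K$, and it contains $K$ because $K$ is weakly invariant for $G$ and $K\subset\widetilde{B}$.

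\textbf{Transferring the boundary structure.} For $\mathcal{S}\in\{\mathcal{R},\widetilde{\mathcal{R}}\}$ write $\widetilde{B}^{e}_{\mathcal{S}},\widetilde{B}^{i}_{\mathcal{S}},\widetilde{B}^{b}_{\mathcal{S}}$ and $\widetilde{B}^{-}_{\mathcal{S}}=\widetilde{B}^{e}_{\mathcal{S}}\cup\widetilde{B}^{b}_{\mathcal{S}}$ for the egress, ingress, bounce-off and exit sets of $\widetilde{B}$ determined by the solutions in $\mathcal{S}$. Each condition in Definition \ref{def:point_eiboff} is a universal quantification over the solutions through the point, so because $\mathcal{R}\subset\widetilde{\mathcal{R}}$ the version tested against $\widetilde{\mathcal{R}}$ implies the version tested against $\mathcal{R}$; hence $\widetilde{B}^{e}_{\widetilde{\mathcal{R}}}\subset\widetilde{B}^{e}_{\mathcal{R}}$, $\widetilde{B}^{i}_{\widetilde{\mathcal{R}}}\subset\widetilde{B}^{i}_{\mathcal{R}}$, $\widetilde{B}^{b}_{\widetilde{\mathcal{R}}}\subset\widetilde{B}^{b}_{\mathcal{R}}$, and in particular $\widetilde{B}^{-}_{\widetilde{\mathcal{R}}}\subset\widetilde{B}^{-}_{\mathcal{R}}$. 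Since all of these are boundary points, $\widetilde{B}^{-}_{\mathcal{R}}\cup\widetilde{B}^{i}_{\mathcal{R}}\subset\partial\widetilde{B}$, and since $\widetilde{B}$ is a block for $\widetilde{G}$ we have $\partial\widetilde{B}=\widetilde{B}^{-}_{\widetilde{\mathcal{R}}}\cup\widetilde{B}^{i}_{\widetilde{\mathcal{R}}}$; combining these gives $\partial\widetilde{B}=\widetilde{B}^{-}_{\mathcal{R}}\cup\widetilde{B}^{i}_{\mathcal{R}}$. For closedness of $\widetilde{B}^{-}_{\mathcal{R}}$ I show the reverse inclusion $\widetilde{B}^{-}_{\mathcal{R}}\subset\widetilde{B}^{-}_{\widetilde{\mathcal{R}}}$, which yields $\widetilde{B}^{-}_{\mathcal{R}}=\widetilde{B}^{-}_{\widetilde{\mathcal{R}}}$, a closed set because $\widetilde{B}$ is a block for $\widetilde{G}$: if some $x\in\widetilde{B}^{-}_{\mathcal{R}}$ were instead in $\widetilde{B}^{i}_{\widetilde{\mathcal{R}}}\subset\widetilde{B}^{i}_{\mathcal{R}}$, then for a solution $\sigma\in\mathcal{R}$ with $\sigma(0)=x$ (furnished by $(K1)$ for $\mathcal{R}$, with $\delta_{1}=0$) we would have $\sigma(t)\in\mathrm{int}(\widetilde{B})$ and $\sigma(t)\notin\widetilde{B}$ for all small $t>0$, a contradiction. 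Thus $B=\widetilde{B}$ is a closed isolating neighborhood of $K$ for $G$ with $\partial B=B^{-}_{\mathcal{R}}\cup B^{i}_{\mathcal{R}}$ and $B^{-}_{\mathcal{R}}$ closed, i.e.\ an isolating block for $K$.

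\textbf{Expected main obstacle.} Everything apart from the closedness of the exit set reduces to bookkeeping with the definitions together with the observation that enlarging the solution family only makes the (universally quantified) egress/ingress/bounce-off conditions harder to satisfy. The one genuinely delicate step is the closedness of $\widetilde{B}^{-}_{\mathcal{R}}$: a priori it is merely a subset of the closed set $\widetilde{B}^{-}_{\widetilde{\mathcal{R}}}$, so one must argue that the two coincide, and the crucial point is that $(K1)$ for $\mathcal{R}$ still supplies a solution through every boundary point, which prevents a point from being simultaneously of exit type for $\mathcal{R}$ and of ingress type for $\widetilde{\mathcal{R}}$.
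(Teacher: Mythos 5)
Your proof is correct and takes essentially the same route as the paper's: apply Theorem \ref{theo:Isol.Block.(K1)-(K5)} to $\widetilde{G}$ and $\widetilde{K}$ (the paper uses the slightly smaller neighborhood $\overline{N\cap O(\widetilde{K})}$, which changes nothing) and then transfer the resulting block to $G$ and $K$ using $\mathcal{R}\subset\widetilde{\mathcal{R}}$. The paper dismisses the transfer step in one sentence, whereas you verify it carefully — in particular the equality of the exit sets and its closedness via the solution supplied by $(K1)$ — which is exactly the detail the paper leaves implicit.
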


\begin{proof}
	Since $\widetilde{\mathcal{R}}$ satisfies (K1)-(K5) for $\widetilde{K}$, the
	set $M=\overline{N\cap O(\widetilde{K})}$ (where $O(\widetilde{K})$ is the
	neighborhood from condition $\left(  K5\right)  $) is a closed isolating
	admissible neighborhood of $\widetilde{K}$. Hence, by Theorem
	\ref{theo:Isol.Block.(K1)-(K5)}, $\widetilde{K}$ has an isolating block $B$
	for $\tilde{G}$. Since any $\varphi\in\mathcal{R}$ belongs to $\tilde
	{\mathcal{R}}$, $B$ is also an isolating block of $K$ for $G$.
\end{proof}

\section{Examples for condition $(K5)$}

In this section we will see one example and one counterexample that are useful
in order to clarify the ideas about the property (K5).

\subsection{An ordinary differential equation without uniqueness}

Let us consider the equation%
\begin{equation}
	x^{\prime}=\sqrt{\left\vert x\right\vert }. \label{eq2}%
\end{equation}
The phase space is $\mathbb{R}$. For $x\left(  0\right)  =x_{0}>0$ the unique
solution is
\[
x\left(  t\right)  =\left(  \frac{t}{2}+\sqrt{x_{0}}\right)  ^{2},
\]
whereas for $x_{0}<0$ the unique solution is
\[
x\left(  t\right)  =-\left(  -\frac{t}{2}+\sqrt{-x_{0}}\right)  ^{2}.
\]

For $x\left(  0\right)  =0$ we have infinite solutions given by%
\[
\overline{x}\left(  t\right)  \equiv0,
\]%
\[
x_{\tau}^{+}\left(  t\right)  =\left\{
\begin{array}
	[c]{c}%
	0\text{, }0\leq t\leq\tau,\\
	\frac{\left(  t-\tau\right)  ^{2}}{4}\text{, }t\geq\tau,
\end{array}
\right.
\]%
\[
x_{\tau}^{-}\left(  t\right)  =\left\{
\begin{array}
	[c]{c}%
	0\text{, }0\leq t\leq\tau,\\
	-\frac{\left(  t-\tau\right)  ^{2}}{4}\text{, }t\geq\tau,
\end{array}
\right.
\]
for all $\tau\geq0$.We observe that $x_{0}^{+}\left(  t\right)  $ is the
maximal solution for $x_{0}=0$ and $x_{0}^{-}\left(  t\right)  $ is the
minimal solution for $x_{0}=0$. Also,
\[
x\left(  t\right)  =\left(  \frac{t}{2}+\sqrt{x_{0}}\right)  ^{2}\rightarrow
x_{0}\left(  t\right)  =\frac{t^{2}}{4},\ \text{if }x_{0}\rightarrow0^{+},
\]%
\[
x\left(  t\right)  =-\left(  -\frac{t}{2}+\sqrt{-x_{0}}\right)  ^{2}%
\rightarrow x_{0}\left(  t\right)  =-\frac{t^{2}}{4},\ \text{if }%
x_{0}\rightarrow0^{-}.
\]
Thus, when $x_{0}\rightarrow0^{+}$ the unique solution converges to the
maximal solution for $0$ and when $x_{0}\rightarrow0^{-}$ the unique solution
converges to the minimal solution for $x_{0}=0$.

Therefore, condition $\left(  K5\right)  $ is not satisfied in any
neighborhood of the fixed point $0$.

\subsection{A differential inclusion with Lipschitz nonlinearity}

For a Banach space $X$ let $C_{v}(X)$ be the set of all non-empty, bounded,
closed, convex subsets of $X.$

Let us consider the boundary-value problem
\begin{equation}
	\left\{
	\begin{array}
		[c]{c}%
		\dfrac{\partial u}{\partial t}-\Delta u\in f(u)+q,\text{ on }\Omega
		\times(0,T),\\
		u=0\text{, on }\partial\Omega\times(0,T),\\
		u(x,0)=u_{0}(x)\text{ on }\Omega,
	\end{array}
	\right.  \label{Inclusion2}%
\end{equation}
where $\Omega\subset\mathbb{R}^{n}$ is an open bounded set with smooth
boundary and {$q\in L^{2}(\Omega)$}. We assume that the multivalued map $f$
satisfies the following assumptions:

\begin{itemize}
	\item[$\left(  f1\right)  $] $f:\mathbb{R}\rightarrow C_{v}(\mathbb{R})$.
	
	\item[$\left(  f2\right)  $] $f$ is Lipschitz in the multivalued sense, i.e.
	there is $C\geq0$ such that
	\begin{equation}
		dist_{H}(f(x),f(z))\leq C\left\vert x-z\right\vert ,\ \forall x,z\in
		\mathbb{R}. \label{fLip}%
	\end{equation}
	
\end{itemize}

Let us define the multivalued map $F:D(F)\subset L^{2}(\Omega)\rightarrow
P(L^{2}(\Omega))$ given by%
\begin{equation}
	F(y(\cdot))=\{\xi(\cdot)\in L^{2}(\Omega):\xi=\widetilde{\xi}
	+q,\ \widetilde{\xi}(x)\in f(y(x))\text{ a.e. on }\Omega\}. \label{F}%
\end{equation}
It is known \cite[Lemmas 11, 12]{MelnikValero98} that:

\begin{itemize}
	\item[$\left(  F1\right)  $] $F:L^{2}(\Omega)\rightarrow C_{v}(L^{2}(\Omega))$;
	
	\item[$\left(  F3\right)  $] $F$ is Lipschitz with the same Lipschitz constant
	as $f$, that is,
	\[
	dist_{H}(F(u),F(v))\leq C\left\Vert u-v\right\Vert _{L^{2}},\ \forall u,v\in
	L^{2}\left(  \Omega\right)  .
	\]
	
\end{itemize}

The operator $A=-\Delta:H^{2}(\Omega)\cap H_{0}^{1}(\Omega)\rightarrow
L^{2}(\Omega)$ is maximal monotone in $L^{2}\left(  \Omega\right)  $. Hence,
inclusion \eqref{Inclusion2} can be written in the abstract form%
\begin{equation}
	\left\{
	\begin{array}
		[c]{c}%
		\dfrac{du}{dt}+Au\in F(u),\text{ }t>0,\\
		u(0)=u_{0}\in L^{2}\left(  \Omega\right)  ,
	\end{array}
	\right.  \label{Inclusion3}%
\end{equation}

If we assume additionally the existence of $M\geq0,\,\epsilon>0$ such that
\begin{equation}
	zs\leq(\lambda_{1}-\epsilon)\left\vert s\right\vert ^{2}+M,\ \forall
	s\in\mathbb{R},\forall z\in f(s), \label{LCondition3}%
\end{equation}
where $\lambda_{1}$ is the first eigenvalue of $-\Delta$ in $H_{0}^{1}%
(\Omega)$, then this problem generates a strict multivalued semiflow in
$L^{2}\left(  \Omega\right)  $ having a global compact attractor $\mathcal{A}$
\cite{MelnikValero98}.

Let us check that the solutions of \eqref{Inclusion3} satisfy property (K5).

The function $u\in C([0,+\infty),L^{2}\left(  \Omega\right)  )$ is a strong
solution of problem \eqref{Inclusion3} if there is a selection $h\in
L_{loc}^{2}(0,+\infty;L^{2}\left(  \Omega\right)  )$, $h\left(  t\right)  \in
F(u(t))$ for a.a. $t$, such that $u\left(  \text{\textperiodcentered}\right)
$ is the unique strong solution of the problem%
\begin{equation}
	\left\{
	\begin{array}
		[c]{c}%
		\dfrac{du}{dt}+Au=h(t),\text{ }t>0,\\
		u(0)=u_{0}\in L^{2}\left(  \Omega\right)  ,
	\end{array}
	\right.  \label{Sel}%
\end{equation}
which means that $u(\cdot)$ is absolutely continuous on any compact subset of
$(0,T)$, it is almost everywhere (a.e.) differentiable on $(0,T)$, and
$u(\cdot)$ satisfies the equation in \eqref{Sel} a.e. on $(0,T)$. Denote the
solution of problem \eqref{Sel} by $u\left(  \text{\textperiodcentered
}\right)  =I(u_{0})h(\cdot)$. It is known \cite{Barbu} that for any
$u_{i}(\cdot)=I(u_{0}^{i})h_{i}(\cdot),$ $i=1,2,$ the next inequality holds:
\begin{equation}
	\left\Vert u_{1}(t)-u_{2}(t)\right\Vert _{L^{2}}\leq\left\Vert u_{1}%
	(s)-u_{2}(s)\right\Vert _{L^{2}}+\int_{s}^{t}\left\Vert h_{1}(\tau)-h_{2}%
	(\tau)\right\Vert _{L^{2}}d\tau,\,\,t\geq s. \label{Int.Ineq2.}%
\end{equation}

If we fix $T>0$, it is known \cite{Tolstonogov} that for any $z(\cdot
)=I(z_{0})g(\cdot)$ and any $u_{0}\in L^{2}\left(  \Omega\right)  $ there
exists a solution $u(\cdot)=I(u_{0})h(\cdot)$ of problem \eqref{Inclusion3}
such that
\begin{equation}
	\left\Vert u(t)-z(t)\right\Vert _{L^{2}}\leq\xi(t),\,\forall t\in\lbrack0,T],
	\label{Int.Ineq3.}%
\end{equation}%
\begin{equation}
	\left\Vert h(t)-g(t)\right\Vert \leq\rho(t)+2C\xi(t),\,\,\text{a.e. on }(0,T),
	\label{Int.Ineq4.}%
\end{equation}
where
\[
\rho(t)=2dist\left(  g(t),F(z(t))\right)  ,
\]%
\[
\xi(t)=\left\Vert u_{0}-z_{0}\right\Vert _{L^{2}}\exp(2Ct)+\int_{0}^{t}%
\exp(2C(t-s))\rho(s)ds.
\]
Concatenating solutions we can easily obtain a solution satisfying these
inequalities for any $T>0$.

\begin{lemma}
	\label{Approx}Let $u\left(  \text{\textperiodcentered}\right)  =I(u_{0}
	)h\left(  \text{\textperiodcentered}\right)  $ be a solution to problem
	\eqref{Inclusion3}. Then for any sequence $u_{0}^{n}\rightarrow u_{0}$ in
	$L^{2}\left(  \Omega\right)  $ there exists a sequence of solutions
	$u_{n}\left(  \text{\textperiodcentered}\right)  =I(u_{0}^{n})h_{n}\left(
	\text{\textperiodcentered}\right)  $ of problem \eqref{Inclusion3} such that
	$u_{n}\rightarrow u$ in $C([0,T],L^{2}\left(  \Omega\right)  )$ for every
	$T>0.$
\end{lemma}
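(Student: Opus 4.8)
The plan is to use the approximation inequalities \eqref{Int.Ineq3.}--\eqref{Int.Ineq4.} with the given solution $u(\cdot)=I(u_0)h(\cdot)$ playing the role of the reference solution $z(\cdot)$, i.e. take $z_0=u_0$ and $g=h$. Fix $T>0$. For each $n$, applying the cited result of \cite{Tolstonogov} with $z(\cdot)=u(\cdot)$, $z_0 = u_0$, $g=h$, and initial datum $u_0^n$, we obtain a solution $u_n(\cdot)=I(u_0^n)h_n(\cdot)$ of \eqref{Inclusion3} satisfying
\[
\left\Vert u_n(t)-u(t)\right\Vert_{L^2}\leq \xi_n(t),\quad \forall t\in[0,T],
\]
where now $\rho_n(t)=2\,dist\left(h(t),F(u(t))\right)=0$ for a.e. $t$, because $h(t)\in F(u(t))$ for a.e. $t$ by the definition of a strong solution. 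Consequently
\[
\xi_n(t)=\left\Vert u_0^n-u_0\right\Vert_{L^2}\exp(2Ct),
\]
which gives
\[
\sup_{t\in[0,T]}\left\Vert u_n(t)-u(t)\right\Vert_{L^2}\leq \left\Vert u_0^n-u_0\right\Vert_{L^2}\exp(2CT)\longrightarrow 0
\]
as $n\to+\infty$, since $u_0^n\to u_0$ in $L^2(\Omega)$. This is exactly convergence in $C([0,T],L^2(\Omega))$.

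\textbf{The main subtlety} is to justify that $\rho_n\equiv 0$: this requires that the selection $h$ associated with the \emph{fixed} solution $u$ is precisely the $g$ we feed into the Tolstonogov-type result, so that $\rho(t)=2\,dist(g(t),F(z(t)))=2\,dist(h(t),F(u(t)))=0$ a.e. One must also check that the inequality from \cite{Tolstonogov}, stated for a fixed $T$, can be used to produce a single sequence valid for \emph{every} $T>0$; the excerpt already remarks that concatenating solutions yields a solution satisfying these inequalities on $[0,T]$ for any $T$, so a diagonal argument over $T=1,2,3,\dots$ together with the uniqueness of the strong solution of \eqref{Sel} (which forces consistency of the pieces on overlapping intervals) produces a single $u_n(\cdot)=I(u_0^n)h_n(\cdot)$ defined on $[0,+\infty)$ with $\sup_{[0,T]}\Vert u_n-u\Vert_{L^2}\leq \Vert u_0^n-u_0\Vert_{L^2}\exp(2CT)$ for every $T$.

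\textbf{In summary}, the proof is short: set $z=u$, $g=h$ in \eqref{Int.Ineq3.}--\eqref{Int.Ineq4.}, observe $\rho\equiv 0$, read off $\xi_n(t)=\Vert u_0^n-u_0\Vert_{L^2}e^{2Ct}$, and conclude uniform convergence on each $[0,T]$; the only point requiring care is the passage from ``for each fixed $T$'' to ``for all $T$'' simultaneously, handled by concatenation and a diagonal extraction as just described.
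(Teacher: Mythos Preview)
Your argument is correct and follows exactly the paper's approach: set $z=u$, $g=h$, observe $\rho\equiv 0$ since $h(t)\in F(u(t))$ a.e., and read off the estimate $\Vert u_n(t)-u(t)\Vert_{L^2}\leq \Vert u_0^n-u_0\Vert_{L^2}e^{2Ct}$. The paper's proof is even more terse and does not spell out the ``for every $T$'' passage, relying implicitly on the concatenation remark preceding the lemma; your extra care there is fine but not strictly needed.
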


\begin{proof}
	Since {$h\left(  t\right)  \in F(u(t))$} for a.a. $t$, we have
	\[
	\rho(t)=2dist\left(  h(t),F(u(t))\right)  =0\text{ for a.a. }t,
	\]
	so in view of \eqref{Int.Ineq3.} for each $u_{0}^{n}$ there exist solutions
	$u_{n}\left(  \text{\textperiodcentered}\right)  =I(u_{0}^{n})h_{n}\left(
	\text{\textperiodcentered}\right)  $ of problem \eqref{Inclusion3} such that
	\[
	\left\Vert u(t)-u_{n}(t)\right\Vert _{L^{2}}\leq\left\Vert u_{0}-u_{0}%
	^{n}\right\Vert _{L^{2}}\exp(2Ct),\,\forall t\geq0.
	\]
	Then the result follows.
\end{proof}

\begin{corollary}
	\label{InclK5}Property (K5) is satisfied in $L^{2}\left(  \Omega\right)  $.
\end{corollary}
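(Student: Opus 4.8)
The plan is to deduce Corollary~\ref{InclK5} directly from Lemma~\ref{Approx}, since Lemma~\ref{Approx} is essentially a stronger form of (K5): it produces approximating solutions starting from $u_0^n\to u_0$ that converge to $u$ uniformly on \emph{every} compact interval $[0,T]$, which is more than the convergence on compacts of $[0,t_\phi)$ required by (K5). First I would recall the statement of (K5): for some open neighborhood $\mathcal{O}(K)$, given $x\in\mathcal{O}(K)$, $\phi\in\mathcal{R}$ with $\phi(0)=x$, and any sequence $x_n\to x$, there must exist a subsequence $\phi_{n_k}\in\mathcal{R}$ with $\phi_{n_k}(0)=x_{n_k}$ and $\phi_{n_k}\to\phi$ uniformly on compacts of $[0,t_\phi)$.

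The key observation is that for problem~\eqref{Inclusion3} the set of solutions $\mathcal{R}$ consists precisely of the strong solutions $u(\cdot)=I(u_0)h(\cdot)$ described above, and these satisfy (K1)--(K4) (this is the content of \cite{MelnikValero98} together with the concatenation remark following \eqref{Int.Ineq4.}). Now take any $x\in L^2(\Omega)$, any $\phi=u(\cdot)=I(u_0)h(\cdot)\in\mathcal{R}$ with $u_0=\phi(0)=x$, and any sequence $x_n\to x$ in $L^2(\Omega)$. Setting $u_0^n=x_n$, Lemma~\ref{Approx} furnishes solutions $u_n(\cdot)=I(u_0^n)h_n(\cdot)\in\mathcal{R}$ with $u_n(0)=x_n$ and $u_n\to u$ in $C([0,T],L^2(\Omega))$ for every $T>0$. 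In particular $u_n\to u$ uniformly on every compact subset of $[0,+\infty)$, hence uniformly on compacts of $[0,t_\phi)$ regardless of the value of $t_\phi$, and no passage to a subsequence is even needed. Since this works for \emph{every} $x$, condition (K5) holds with $\mathcal{O}(K)$ taken to be the whole space $L^2(\Omega)$, for any choice of isolated weakly invariant set $K$.

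There is essentially no obstacle here: the work has already been done in Lemma~\ref{Approx} via the Tolstonogov-type approximation inequality~\eqref{Int.Ineq3.} and the exponential estimate $\|u(t)-u_n(t)\|_{L^2}\le\|u_0-u_0^n\|_{L^2}\exp(2Ct)$. The only point worth stating explicitly is that uniform convergence on $C([0,T],L^2(\Omega))$ for all $T>0$ implies the notion of convergence $\phi_n\to\phi$ used in (K5), which is immediate from the definition of that convergence as uniform convergence on $[0,t]$ for each $t<t_\phi$. I would therefore write the proof of Corollary~\ref{InclK5} as a single short paragraph: invoke Lemma~\ref{Approx}, observe that it gives exactly the approximating sequence required by (K5) (with the extra strength of convergence on all of $[0,+\infty)$), and conclude that (K5) holds on the entire phase space. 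If one wishes to be scrupulous, one can also note that because $\mathcal{O}(K)=L^2(\Omega)$ the neighborhood hypothesis in (K5) is trivially met, so the corollary follows with no restriction on $K$.
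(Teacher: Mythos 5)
Your proposal is correct and matches the paper's approach exactly: the paper gives no separate proof for Corollary~\ref{InclK5}, treating it as an immediate consequence of Lemma~\ref{Approx}, which is precisely your argument (the lemma even yields convergence of the full sequence on all of $[0,+\infty)$, stronger than the subsequence convergence on $[0,t_\phi)$ that (K5) requires, with $\mathcal{O}(K)$ taken to be all of $L^2(\Omega)$).
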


\begin{lemma}
	Let $u\left(  \text{\textperiodcentered}\right)  =I(u_{0})h\left(
	\text{\textperiodcentered}\right)  $ be a solution to problem
	\eqref{Inclusion3} with $u_{0}\in H_{0}^{1}\left(  \Omega\right)  $. Then for
	any sequence $u_{0}^{n}\rightarrow u_{0}$ in $H_{0}^{1}\left(  \Omega\right)
	$ there exists a sequence of solutions $u_{n}\left(  \text{\textperiodcentered
	}\right)  =I(u_{0}^{n})h_{n}\left(  \text{\textperiodcentered}\right)  $ of
	problem \eqref{Inclusion3} such that $u_{n}\rightarrow u$ in $C([0,T],H_{0}
	^{1}\left(  \Omega\right)  )$ for every $T>0.$
\end{lemma}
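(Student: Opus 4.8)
The plan is to recycle the approximating solutions built in the proof of Lemma~\ref{Approx} and to upgrade their convergence from $C([0,T],L^{2}(\Omega))$ to $C([0,T],H_{0}^{1}(\Omega))$ by means of an energy estimate in the stronger norm. Given $u_{0}^{n}\rightarrow u_{0}$ in $H_{0}^{1}(\Omega)$ (hence also in $L^{2}(\Omega)$), one applies the result of \cite{Tolstonogov} with $z(\cdot)=u(\cdot)$, $g(\cdot)=h(\cdot)$ and $z_{0}=u_{0}$ to obtain, for each $n$, a solution $u_{n}(\cdot)=I(u_{0}^{n})h_{n}(\cdot)$ of \eqref{Inclusion3} satisfying \eqref{Int.Ineq3.}--\eqref{Int.Ineq4.}. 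Since $h(t)\in F(u(t))$ for a.a.\ $t$, the function $\rho$ vanishes identically, so that $\xi(t)=\|u_{0}-u_{0}^{n}\|_{L^{2}}\exp(2Ct)$ and \eqref{Int.Ineq4.} gives the pointwise bound $\|h(t)-h_{n}(t)\|_{L^{2}}\leq 2C\|u_{0}-u_{0}^{n}\|_{L^{2}}\exp(2Ct)=:\varepsilon_{n}(t)$, where $\sup_{t\in[0,T]}\varepsilon_{n}(t)\rightarrow 0$ as $n\rightarrow+\infty$. (These are precisely the solutions already produced in Lemma~\ref{Approx}, so the $L^{2}$ convergence is automatic.)

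Next I would set $w_{n}=u-u_{n}$, which, by linearity of problem \eqref{Sel}, is the unique strong solution of $w_{n}'+Aw_{n}=h(t)-h_{n}(t)$ with $w_{n}(0)=u_{0}-u_{0}^{n}\in H_{0}^{1}(\Omega)$. By the standard regularity theory for the linear heat equation (see e.g.\ \cite{Barbu}), an initial datum in $D(A^{1/2})=H_{0}^{1}(\Omega)$ together with a right-hand side in $L^{2}_{loc}(0,+\infty;L^{2}(\Omega))$ produces $w_{n}\in C([0,T];H_{0}^{1}(\Omega))\cap L^{2}(0,T;D(A))$ with $w_{n}'\in L^{2}(0,T;L^{2}(\Omega))$, so that testing the equation with $Aw_{n}=-\Delta w_{n}$ is legitimate. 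Using $\|A^{1/2}v\|_{L^{2}}^{2}=\|\nabla v\|_{L^{2}}^{2}$ and absorbing the cross term $(h-h_{n},-\Delta w_{n})$ with Cauchy--Schwarz and Young's inequality, one obtains, for a.a.\ $t$,
\[
\frac{d}{dt}\|\nabla w_{n}(t)\|_{L^{2}}^{2}+\|\Delta w_{n}(t)\|_{L^{2}}^{2}\leq\|h(t)-h_{n}(t)\|_{L^{2}}^{2}\leq\varepsilon_{n}(t)^{2}.
\]

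Integrating on $[0,t]$ and discarding the nonnegative term involving $\|\Delta w_{n}\|_{L^{2}}^{2}$ yields
\[
\sup_{t\in[0,T]}\|\nabla w_{n}(t)\|_{L^{2}}^{2}\leq\|\nabla(u_{0}-u_{0}^{n})\|_{L^{2}}^{2}+T\,\sup_{t\in[0,T]}\varepsilon_{n}(t)^{2},
\]
and both terms on the right tend to $0$ as $n\rightarrow+\infty$, since $u_{0}^{n}\rightarrow u_{0}$ in $H_{0}^{1}(\Omega)$ and by the bound on $\varepsilon_{n}$ above. By the Poincar\'{e} inequality this is equivalent to $u_{n}\rightarrow u$ in $C([0,T];H_{0}^{1}(\Omega))$; since $T>0$ is arbitrary, the statement follows.

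The only genuinely delicate point I expect is the justification of the $H_{0}^{1}$-energy identity — that $w_{n}(t)\in D(A)$ for a.a.\ $t$ and that $t\mapsto\|\nabla w_{n}(t)\|_{L^{2}}^{2}$ is absolutely continuous — which for $H_{0}^{1}$ data is exactly the content of classical linear parabolic regularity; should one prefer not to invoke it as a black box, the same inequalities can be derived first for Galerkin approximations in the eigenbasis of $-\Delta$ and then passed to the limit using the uniform a priori bound obtained above. Everything else reduces to Tolstonogov's selection theorem, the vanishing of $\rho$, and an elementary integration.
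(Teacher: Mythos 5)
Your proposal is correct and follows the same route as the paper: the paper's proof simply takes the approximating solutions from Lemma \ref{Approx} and asserts that the upgrade to $C([0,T],H_{0}^{1}(\Omega))$ convergence follows ``in a standard way,'' which is exactly the $H_{0}^{1}$-energy estimate (testing the difference equation with $-\Delta w_{n}$ and using the pointwise bound on $h-h_{n}$ coming from \eqref{Int.Ineq4.} with $\rho\equiv0$) that you carry out explicitly. Your version in fact supplies the details the paper omits, including the regularity justification needed to test with $Aw_{n}$.
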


\begin{proof}
	From Lemma \ref{Approx} we obtain the sequence $u_{n}\left(
	\text{\textperiodcentered}\right)  =I(u_{0}^{n})h_{n}\left(
	\text{\textperiodcentered}\right)  $. Since $u_{0}^{n}\rightarrow u_{0}$ in
	$H_{0}^{1}\left(  \Omega\right)  $ we can prove in a standard way that
	$u_{n}\rightarrow u$ in $C([0,T],H_{0}^{1}\left(  \Omega\right)  ).$
\end{proof}

\begin{corollary}
	Property (K5) is satisfied in $H_{0}^{1}\left(  \Omega\right)  $.
\end{corollary}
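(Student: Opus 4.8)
The plan is to deduce this exactly as Corollary~\ref{InclK5} was deduced from Lemma~\ref{Approx}, now using the $H_0^1(\Omega)$ approximation lemma proved just above in place of its $L^2(\Omega)$ counterpart. Since that lemma furnishes an approximation property valid for \emph{every} initial datum in $H_0^1(\Omega)$, I would not restrict to a small neighbourhood: I would take the neighbourhood $\mathcal{O}(K)$ demanded by (K5) to be the whole space $H_0^1(\Omega)$ (or any open set containing $K$), so that $t_\phi=+\infty$ for every relevant $\phi$.

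Concretely, fix a closed isolated weakly invariant set $K$ and put $\mathcal{O}(K)=H_0^1(\Omega)$. Let $x=u_0\in\mathcal{O}(K)$, let $\phi=u(\cdot)=I(u_0)h(\cdot)\in\mathcal{R}$ be any solution of \eqref{Inclusion3} with $\phi(0)=u_0$, and let $x_n=u_0^n\to u_0$ in $H_0^1(\Omega)$. By the lemma above there exist solutions $\phi_n=u_n(\cdot)=I(u_0^n)h_n(\cdot)$ with $\phi_n(0)=u_0^n$ such that $u_n\to u$ in $C([0,T],H_0^1(\Omega))$ for every $T>0$, i.e. $\phi_n\to\phi$ uniformly on every compact subset of $[0,+\infty)$. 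Since the convergence required in (K5) is uniform convergence only on compact subsets of $[0,t_\phi)\subseteq[0,+\infty)$, the whole sequence $\{\phi_n\}$ --- and hence trivially any subsequence --- witnesses it, which proves the corollary.

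I expect no genuine difficulty: all the analytic content (the integral inequalities \eqref{Int.Ineq3.}--\eqref{Int.Ineq4.} and the standard $H_0^1(\Omega)$ regularity estimate used to upgrade convergence to $C([0,T],H_0^1(\Omega))$) already sits in the preceding lemmas, and the convergence mode appearing in (K5) is weaker than uniform convergence on compacts of $[0,+\infty)$, so it is inherited automatically. The one point I would take care to verify is that the strong solutions with $H_0^1(\Omega)$ initial data do constitute a set $\mathcal{R}$ satisfying (K1)--(K4), so that the statement of (K5) is meaningful in this phase space; this is checked as for $L^2(\Omega)$, by translation and concatenation of strong solutions together with the a priori bounds in $H_0^1(\Omega)$.
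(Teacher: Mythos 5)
Your argument is exactly the paper's (implicit) one: the corollary is stated without proof precisely because the preceding lemma already yields, for every $u_0^n\to u_0$ in $H_0^1(\Omega)$, a full sequence of solutions converging to $u$ uniformly on compacts of $[0,+\infty)$, which is stronger than the subsequence convergence on compacts of $[0,t_\phi)$ that (K5) requires. Your proposal is correct and follows the same route.
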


\section{Application}

Let us consider the differential inclusion
\begin{equation}
	\left\{
	\begin{array}
		[c]{l}%
		\dfrac{\partial u}{\partial t}-\dfrac{\partial^{2}u}{\partial x^{2}}\in
		H_{0}(u)+\omega u,\ \text{on\ }(0,\infty)\times\Omega,\\
		u|_{\partial\Omega}=0,\\
		u(x,0)=u_{0}(x),
	\end{array}
	\right.  \label{Incl}%
\end{equation}
where $\Omega=(0,1)$, $0\leq\omega<\pi^{2}$, and
\[
H_{0}(u)=\left\{
\begin{array}
	[c]{ll}%
	-1, & \text{if }u<0,\\
	\left[  -1,1\right]  , & \text{if }u=0,\\
	1, & \text{if }u>0
\end{array}
\right.
\]
is the Heaviside function. Differential inclusions of the type appear when we
have a reaction-diffusion equation with a discontinuous nonlinearity and we
complete the image of the function at the points of discontinuity with a
vertical line. Equations of this type appear in models of physical interest
(see, for example, \cite{FeNo,NC,Terman1,Terman2}).

In this section, we will prove the existence of isolating blocks for the fixed
points (but $0$) of problem \eqref{Incl} by using the results of Section
\ref{Block}. Also, we will prove a uniqueness theorem for initial conditions
of certain type.

\subsection{Previous results}

We recall what is known about the dynamics of problem \eqref{Incl}.

Problem \eqref{Incl} can be written in a functional form. Indeed, we define
the following proper, convex, lower semicontinuous functions $\psi^{i}\colon
L^{2}(\Omega)\rightarrow(-\infty,+\infty]$:
\[
\psi^{1}\left(  u\right)  =\left\{
\begin{array}
	[c]{c}%
	\frac{1}{2}\int_{\Omega}\left\vert \nabla u\right\vert ^{2}dx,\text{ if }u\in
	H_{0}^{1}\left(  \Omega\right)  ,\\
	+\infty\text{, otherwise,}%
\end{array}
\right.
\]%
\[
\psi^{2}\left(  u\right)  =\left\{
\begin{array}
	[c]{c}%
	\int_{\Omega}\left(  \omega\frac{u^{2}}{2}+\left\vert u\right\vert \right)
	dx,\text{ if }\left\vert u\left(  \text{\textperiodcentered}\right)
	\right\vert \in L^{1}\left(  \Omega\right)  ,\\
	+\infty\text{, otherwise.}%
\end{array}
\right.
\]
It is known (see e.g. \cite{Barbu}) that the subdifferentials $\partial
\psi^{1}$ and $\partial\psi^{2}$ of these functions are given by%
\[
\partial\psi^{1}(u)=\left\{  y\in L^{2}(\Omega):y(x)=-\frac{\partial^{2}
	u}{\partial x^{2}}(x)\text{, a.e. on }\Omega\right\}  ,
\]
\[
\partial\psi^{2}(u)=\left\{  y\in L^{2}\left(  \Omega\right)  :\text{
}y\left(  x\right)  \in H_{0}\left(  u\left(  x\right)  \right)  +\omega
u\left(  x\right)  \text{, a.e. on }\Omega\right\}  .
\]
Hence, problem \eqref{Incl} can be rewritten in the abstract form
\begin{equation}
	\left\{
	\begin{array}
		[c]{l}%
		\dfrac{\partial u}{\partial t}+\partial\psi^{1}(u)-\partial\psi^{2}(u)\ni0,\\
		u(0)=u_{0}.
	\end{array}
	\right.  \label{Abstract}%
\end{equation}
We observe that $\left\vert u\right\vert =\int_{0}^{u}H_{0}\left(  s\right)
ds $ and $D\left(  \partial\psi^{1}\right)  =H^{2}\left(  \Omega\right)  \cap
H_{0}^{1}\left(  \Omega\right)  ,\ D\left(  \partial\psi^{2}\right)
=L^{2}\left(  \Omega\right)  $.

\begin{definition}
	For $u_{0}\in L^{2}\left(  \Omega\right)  $ and $T>0$ the function $u\in
	C(\left[  0,T\right]  ,L^{2}(\Omega))$ is called a strong solution of problem
	\eqref{Incl} on $[0,T]$ if:
	
	\begin{enumerate}
		\item $u(0)=u_{0}$;
		
		\item $u(\cdot)$ is absolutely continuous on $(0,T)$ and $u\left(  t\right)
		\in D\left(  \partial\psi^{1}\right)  $ for a.a. $t\in\left(  0,T\right)  $;
		
		\item There exist a function $g\in L^{2}\left(  0,T;L^{2}\left(
		\Omega\right)  \right)  $ such that $g(t)\in\partial\psi^{2}(u(t))$, a.e. on
		$(0,T)$, and
		\begin{equation}
			\frac{du(t)}{dt}-\frac{\partial^{2}u\left(  t\right)  }{\partial x^{2}
			}-g(t)=0,\text{ for a.a. }t\in(0,T), \label{exist1}%
		\end{equation}
		where the equality is understood in the sense of the space $L^{2}(\Omega).$
	\end{enumerate}
\end{definition}

\begin{remark}
	Alternatively, equality \eqref{exist1} can be written as
	\begin{equation}
		\frac{du(t)}{dt}-\frac{\partial^{2}u\left(  t\right)  }{\partial x^{2}
		}-h(t)=\omega u\left(  t\right)  ,\text{ for a.a. }t\in(0,T), \label{exist2}%
	\end{equation}
	where $h\in L^{2}\left(  0,T;L^{2}\left(  \Omega\right)  \right)  $ and
	$h(t,x)\in H_{0}(u(t,x))$, for a.e. $t>0$, $x\in\Omega$.
\end{remark}

From \cite[Theorem 4, Lemmas 1 and 2]{Valero01} we know the following facts.
For each $u_{0}\in L^{2}\left(  \Omega\right)  $ and $T>0$ there exists at
least one strong solution $u\left(  \cdot\right)  $ of \eqref{Incl} and each
solution can be extended to the whole semiline $[0,\infty)$, so that they are
global. Moreover, any solution $u\left(  \text{\textperiodcentered}\right)  $
belongs to the space $C\left(  \left(  0,+\infty\right)  ,H_{0}^{1}\left(
\Omega\right)  \right)  $ and, if $u_{0}\in H_{0}^{1}\left(  \Omega\right)  $,
then $u\in C\left(  [0,+\infty),H_{0}^{1}\left(  \Omega\right)  \right)  $.

Let $\mathcal{D}\left(  u_{0}\right)  $ be the set of all strong solutions
defined on $\left[  0,+\infty\right)  $ for the initial condition $u_{0}$ and
let $\mathcal{R}=\cup_{u_{0}\in L^{2}\left(  \Omega\right)  }\mathcal{D}%
\left(  u_{0}\right)  $. Let $G:\mathbb{R}^{+}\times L^{2}\left(
\Omega\right)  \rightarrow P\left(  L^{2}\left(  \Omega\right)  \right)  $ be
the map%
\[
G\left(  t,u_{0}\right)  =\{u\left(  t\right)  :u\in\mathcal{D}\left(
u_{0}\right)  \},
\]
which is a strict multivalued semiflow. Moreover, properties $\left(
K1\right)  -\left(  K3\right)  $ are satisfied for $\mathcal{R}$. Also,
$\left(  K4\right)  $ is shown to be true in \cite[Lemma 31]{HenVal}.

Concerning the asymptotic behavior of solutions in the long term, $G$
possesses a global compact invariant attractor $\mathcal{A}$ \cite[Theorem
4]{Valero01}, which is characterized by the union of all bounded complete
trajectories. In addition, $\mathcal{A}$ is compact in $W^{2-\delta,p}\left(
\Omega\right)  $ for all $\delta>0$,\ $p\geq1$ and
\[
dist_{W^{2-\delta,p}}\left(  G(t,B),\mathcal{A}\right)  \rightarrow0,\text{ as
}t\rightarrow+\infty,
\]
for any bounded set $B$ \cite{ARV06}. It follows then that $\mathcal{A}$ is
compact in $C^{1}\left(  [0,1]\right)  $ and\\ $dist_{C^{1}}\left(
G(t,B),\mathcal{A}\right)  \rightarrow0$ as $t\rightarrow+\infty$. Also, it is
proved in \cite{Valero05} that $\mathcal{A}$ is a connected set.

The structure of the attractor was studied in detail in \cite{ARV06}. We
summarize the main results. Problem \eqref{Incl} has an infinite (but
countable) number of fixed points:\ $v_{0}\equiv0$,\ $v_{1}^{+},\ v_{1}%
^{-},\ v_{2}^{+},\ v_{2}^{-},...$ ($v_{k}^{+}\left(  x\right)  =-v_{k}%
^{-}\left(  x\right)  $ for all $k$). which satisfy the following properties:

\begin{enumerate}
	\item $v_{k}^{\pm}$ possess exactly $k-1$ zeros in $\left(  0,1\right)  ;$
	
	\item $v_{1}^{+},v_{1}^{-}$ are asymptotically stable (so for $u_{0}
	=v_{1}^{\pm}$ the solution is unique);
	
	\item $0,\ v_{k}^{\pm},\ k\geq2$, are unstable;
	
	\item $v_{k}^{\pm}\rightarrow0$ as $k\rightarrow\infty.$
\end{enumerate}

We define the continuous function $E:H_{0}^{1}\left(  0,1\right)
\rightarrow\mathbb{R}$ by
\begin{equation}
	E\left(  u\right)  =\frac{1}{2}\int_{0}^{1}\left\vert \frac{\partial
		u}{\partial x}\right\vert ^{2}dx-\int_{0}^{1}\left(  \left\vert u\right\vert
	+\frac{\omega}{2}u^{2}\right)  dx=\psi^{1}\left(  u\right)  -\psi^{2}\left(
	u\right)  . \label{LyapunovFunction}%
\end{equation}
It is shown in in \cite{ARV06} that $E$ is a Lyapunov function and then that
for any $u\in\mathcal{D}\left(  u_{0}\right)  $, $u_{0}\in L^{2}\left(
\Omega\right)  $, there is a fixed point $z$ such that $u\left(  t\right)
\rightarrow z$ as $t\rightarrow+\infty$. We note that by the regularity of the
solutions, $E\left(  u\left(  t\right)  \right)  :\left(  0,+\infty\right)
\rightarrow\mathbb{R}$ is a continuous function. We note also that if
$u_{0}\in H_{0}^{1}(\Omega)$, then $E\left(  u\left(  t\right)  \right)  $ is
continuous on $\left[  0,+\infty\right)  $. Also, if $\phi$ is a bounded
complete trajectory, then there is a fixed point $z$ such that $\phi\left(
t\right)  \rightarrow z$ as $t\rightarrow-\infty$. Therefore, the global
attractor is characterized by the set of stationary points and their
heteroclinic connections. In \cite{ARV06} some of these connections have been
established, although the question of determining the full set of connections
is still open. The fixed points are ordered by the Lyapunov function $E$ in
the following way:%
\[
E\left(  v_{1}\right)  =E\left(  v_{1}^{-}\right)  <E\left(  v_{2}\right)
=E\left(  v_{2}^{-}\right)  <...<E\left(  v_{k}\right)  =E\left(  v_{k}%
^{-}\right)  <...<E\left(  0\right)  =0.
\]
In particular, this implies that\ heteroclinic connections from $v_{k}^{\pm}$
to $v_{j}^{\pm}$ with $k\leq j$ are forbidden. Finally, we observe that the
fixed point $0$ is special, because for any other fixed point $z=v_{k}^{+}$
(or $v_{k}^{-}$) there exists a solution $u\left(  \text{\textperiodcentered
}\right)  $ starting at $0$ such that $u\left(  t\right)  \rightarrow z$ as
$t\rightarrow+\infty$. The conclusion is two-fold: on the one hand, for the
initial condition $u_{0}=0$ there exists an infinite number of solutions; on
the other hand, for any $z=v_{k}^{+}$ (or $v_{k}^{-}$) there exists an
heteroclinic connection from $0$ to $z.$

\subsection{Isolating block}

In order to understand the dynamics inside of the global attractor it is
important to know what happens in a neighborhood of each fixed point.
Reasoning as in \cite[p.32]{HenVal} we can establish that each fixed point
$v_{k}^{+}$ (or $v_{k}^{-}$), $k\geq1$, is an isolated weakly invariant set.
The point $0$ is not isolated as $v_{k}^{\pm}\rightarrow0$ as $k\rightarrow
\infty.$ Applying the results of the previous section we will obtain the
existence of an isolating block for each $v_{k}^{+}$ ($v_{k}^{-}$), $k\geq1$.

It is not possible to apply directly Theorem \ref{theo:Isol.Block.(K1)-(K5)},
because the solutions of \eqref{Incl} do not satisfy condition $\left(
K5\right)  $, as the following lemma shows.

\begin{lemma}
	There exists a sequence $\{u_{0}^{n}\}$ and a solution $u\left(
	\text{\textperiodcentered}\right)  \in\mathcal{D}(0)$ such that $u_{0}
	^{n}\rightarrow0$ and there is no subsequence of solutions $\{u^{n_{k}}\left(
	\text{\textperiodcentered}\right)  \}$ with $u^{n_{k}}(0)=u_{0}^{n_{k}}$ such
	that $u^{n_{k}}\rightarrow u$ (in the sense of $\left(  K5\right)  $).
\end{lemma}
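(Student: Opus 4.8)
The strategy is to reproduce, for \eqref{Incl}, the mechanism behind the scalar example $x'=\sqrt{|x|}$: an initial datum that is strictly positive in the interior of $\Omega$ forces the Heaviside selection to equal $+1$, the corresponding solution is then the \emph{unique} one and it moves away from $0$, so it cannot converge to the stationary solution $u\equiv 0$. Concretely, I would take $u_0^n=\tfrac1n e_1$, where $e_1(x)=\sqrt2\,\sin(\pi x)$ is the first Dirichlet eigenfunction of $A:=-\partial^2/\partial x^2$ on $\Omega=(0,1)$ (so $Ae_1=\pi^2e_1$, $e_1>0$ on $(0,1)$, $\|e_1\|_{L^2}=1$); then $u_0^n\to 0$ in $L^2(\Omega)$. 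For the solution $u\in\mathcal D(0)$ that cannot be approached I would take the zero solution $u(t)\equiv 0$.

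The core step is to show that $\mathcal D(\tfrac1n e_1)$ is a singleton. Let $\phi\in\mathcal D(\tfrac1n e_1)$, with selection $h$, $h(t,x)\in H_0(\phi(t,x))$ a.e. As long as $\phi(t,\cdot)>0$ on $(0,1)$ one must have $h(t,\cdot)\equiv 1$ a.e., so on such a time interval $\phi$ solves the linear problem $\partial_t\phi-\partial_{xx}\phi-\omega\phi=1$, $\phi(0)=\tfrac1n e_1$, whose unique solution is
\[
w_n(t)=\bar u(t)+\tfrac1n\,e^{-(\pi^2-\omega)t}e_1,\qquad \bar u(t):=\int_0^t e^{-s(A-\omega)}\mathbf 1\,ds= v_1^{+}-e^{-t(A-\omega)}v_1^{+},
\]
where $\mathbf 1$ denotes the constant function $1$ and we used $(A-\omega)^{-1}\mathbf 1=v_1^{+}$. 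I would then prove $\phi\equiv w_n$ on $[0,\infty)$. By the strong maximum principle (the forcing $\mathbf 1$ is strictly positive), $w_n(t,\cdot)>0$ on $(0,1)$ for every $t\ge 0$, with the boundary derivatives of the correct sign (Hopf lemma). A general $\phi$ cannot touch $0$ at an interior point while it is positive; near $\partial\Omega$ one uses the comparison $\phi(t)\ge \tfrac1n e^{-(\pi^2-\omega)t}e_1-\bar u(t)$ (the subsolution obtained with $h\equiv-1$), and since $e_1$ and $\bar u(t)$ vanish to first order at $\partial\Omega$ this lower bound is positive on $(0,1)$ for $t$ small, whence $\phi\equiv w_n$ on a right-neighbourhood of $0$. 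A bootstrap — the set of $t$ with $\phi\equiv w_n$ on $[0,t]$ is relatively open and closed in $[0,\infty)$ — extends this to all $t$, so $\mathcal D(\tfrac1n e_1)=\{w_n\}$.

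It remains to note that $\bar u$ is a strong solution of \eqref{Incl} with $\bar u(0)=0$ (indeed $\partial_t\bar u-\partial_{xx}\bar u-\omega\bar u=\mathbf 1$ and $1\in H_0(\bar u(t,x))$ a.e., since $\bar u(t,\cdot)>0$ on $(0,1)$ for $t>0$; moreover $\bar u(t)\to v_1^{+}$ as $t\to+\infty$, so $\bar u$ is global), hence $\bar u\in\mathcal D(0)$, and clearly $\bar u\not\equiv 0$. Since $\|w_n(t)-\bar u(t)\|_{L^2}=\tfrac1n e^{-(\pi^2-\omega)t}\le \tfrac1n$, we get $w_n\to\bar u$ uniformly on $[0,+\infty)$, in particular uniformly on every compact subset of $[0,+\infty)$. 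Therefore, if some subsequence $u^{n_k}\in\mathcal D(u_0^{n_k})=\{w_{n_k}\}$ converged to $u\equiv 0$ in the sense of $(K5)$ — which, for the zero solution, amounts to uniform convergence on compact subsets of $[0,+\infty)$ — then $w_{n_k}\to 0$ uniformly on $[0,1]$, contradicting $w_{n_k}\to\bar u$ together with $\bar u(1)\ne 0$. This proves the lemma.

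The step I expect to be the main obstacle is the uniqueness $\mathcal D(\tfrac1n e_1)=\{w_n\}$: one must treat the multivalued term $H_0$ on the set $\{\phi=0\}$ and rule out that a solution becomes nonpositive, either at an interior point of $\Omega$ or in a thin layer near $\partial\Omega$, and this is exactly where the strong maximum principle, the boundary comparison with $\tfrac1n e^{-(\pi^2-\omega)t}e_1-\bar u(t)$, and the spatial $C^1$-regularity of strong solutions with data in $D(A)$ enter. Everything else reduces to explicit computations with the Dirichlet heat semigroup.
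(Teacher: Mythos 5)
Your proposal is correct, and it reaches the conclusion by a genuinely different (though closely related) route. The paper also starts from strictly positive initial data $u_0^n>0$ on $(0,1)$ with nonvanishing boundary derivatives, but it invokes \cite[Lemma 13]{Valero2021} to get that the solution through each $u_0^n$ is \emph{unique and positive}, and then takes as the unreachable target the solution $v\in\mathcal D(0)$ with $v(t)\to v_1^-$ from \cite[Theorem 6.7]{ARV06}: positive approximants cannot converge to a solution that eventually takes negative values. You instead take the target $u\equiv 0\in\mathcal D(0)$ and compute the approximating solutions in closed form, $w_n(t)=\bar u(t)+\tfrac1n e^{-(\pi^2-\omega)t}e_1$ with $\bar u(t)=v_1^+-e^{-t(A-\omega)}v_1^+$, showing they converge to the ``maximal'' solution $\bar u\neq 0$ through $0$ — which is a more faithful PDE analogue of the paper's ODE counterexample $x'=\sqrt{|x|}$, where solutions from $x_0>0$ converge to the maximal solution through $0$ rather than to the rest solution. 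What your approach buys is explicitness and independence from the heteroclinic-to-$v_1^-$ result; what it costs is that you must prove the uniqueness $\mathcal D(\tfrac1n e_1)=\{w_n\}$ yourself, which is exactly the step the paper outsources to \cite{Valero2021}. Your sketch of that step is sound (the $L^2$ comparison with the subsolution $\tfrac1n e^{-(\pi^2-\omega)t}e_1-\bar u(t)$, positivity of that subsolution for small $t$ via the boundary estimate $\bar u(t,x)\le C x\sqrt t$ together with $e_1(x)\ge cx$, and an open--closed bootstrap whose step size stays bounded below because the boundary derivatives of $w_n(t)$ are uniformly bounded away from zero), but these quantitative boundary estimates do need to be written out; alternatively, since your data $\tfrac1n e_1$ satisfy precisely the hypotheses of \cite[Lemma 13]{Valero2021} ($u_0^n\in V^{2r}$, positive on $(0,1)$, with derivatives of the correct sign at $0$ and $1$), you could simply cite it as the paper does and keep only your choice of target and the explicit limit $\bar u$.
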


\begin{proof}
	Let $u_{0}^{n}\in V^{2r}$, $\frac{3}{4}<r<1$, where as usual $V^{2r}=D\left(
	A^{r}\right)  $ and $A,$ $D\left(  A\right)  =H^{2}\left(  \Omega\right)  \cap
	H_{0}^{1}\left(  \Omega\right)  $, is the operator $-\dfrac{d^{2}}{dx^{2}}$
	with Dirichlet boundary conditions. We choose $u_{0}^{n}$ such that $\dfrac
	{d}{dx}u_{0}^{n}\left(  0\right)  >0,\ \dfrac{d}{dx}u_{0}^{n}\left(  0\right)
	<0,\ u_{0}^{n}\left(  x\right)  >0$ for $x\in\left(  0,1\right)  $ (we observe
	that $V^{2r}\subset C^{1}\left(  \overline{\Omega}\right)  $) and $u_{0}
	^{n}\rightarrow0$. Then by \cite[Lemma 13]{Valero2021} there exists a unique
	solution $u^{n}\left(  \text{\textperiodcentered}\right)  \in\mathcal{D}
	(u_{0}^{n})$ which satisfies $u^{n}\left(  t,x\right)  >0$ for any
	$x\in\left(  0,1\right)  $ and $t\geq0$. Also, it converges to $v_{1}^{+}$ as
	$t\rightarrow+\infty$. We know from \cite[Theorem 6.7]{ARV06} that there
	exists a solution $v\left(  \text{\textperiodcentered}\right)  $ such that
	$v\left(  0\right)  =0$ and $v\left(  t\right)  \rightarrow v_{1}^{-}$ in
	$C^{1}(\overline{\Omega})$ as $t\rightarrow+\infty$. It is clear that no
	subsequence of $u^{n}\left(  \text{\textperiodcentered}\right)  $ can converge
	to $v\left(  \text{\textperiodcentered}\right)  $, because $u^{n}\left(
	t\right)  $ is positive for any $t\geq0$ but $v\left(  t\right)  $ take
	negative values for $t$ large enough.
\end{proof}

\bigskip

Hence, we will apply Theorem \ref{theo:exist_block_subsemiflow}. For this aim,
we need to define a semiflow $\widetilde{G}$ containing $G$ that satisfies
$\left(  K1\right)  -\left(  K5\right)  $.

For any $\varepsilon>0$ let us define the multivalued function $g_{\varepsilon
}$ given by%
\[
g_{\varepsilon}\left(  u\right)  =\left\{
\begin{array}
	[c]{c}%
	-1\text{ if }u\leq-\varepsilon,\\
	\lbrack-1,\frac{2}{\varepsilon}u+1]\text{ if }-\varepsilon\leq u\leq0,\\
	\lbrack\frac{2}{\varepsilon}u-1,1]\text{ if }0\leq u\leq\varepsilon,\\
	1\text{ if }u\geq\varepsilon.
\end{array}
\right.
\]
It is easy to see that the map $f_{\varepsilon}\left(  u\right)
=g_{\varepsilon}\left(  u\right)  +\omega u$ satisfies conditions
$(f1)$-$(f2)$ for problem \eqref{Inclusion2}. Then problem \eqref{Inclusion2}
with $f=f_{\varepsilon}$ and $q=0$ generates for each $\varepsilon>0$ a strict
multivalued semiflow $G_{\varepsilon}$ which contains the semiflow $G$ for
problem \eqref{Incl} (as every solution to problem \eqref{Incl} is obviously a
solution to problem \eqref{Inclusion2}).

We denote by $\mathcal{D}_{\varepsilon}\left(  u_{0}\right)  $ the set of all
strong solutions defined on $\left[  0,+\infty\right)  $ for the initial
condition $u_{0}$. Let $\mathcal{R}_{\varepsilon}=\cup_{u_{0}\in L^{2}\left(
	\Omega\right)  }\mathcal{D}_{\varepsilon}\left(  u_{0}\right)  $. It follows
from the proof of Lemma 6 in \cite{MelnikValero98} that $\left(  K1\right)
-\left(  K3\right)  $ hold true. In view of Corollary \ref{InclK5}, $\left(
K5\right)  $ is satisfied. We prove that $\left(  K4\right)  $ holds as well.

\begin{lemma}
	\label{PropertyK4}$\left(  K4\right)  $ is satisfied.
\end{lemma}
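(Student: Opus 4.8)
The plan is to run the classical compactness argument for parabolic inclusions with a Lipschitz, uniformly bounded nonlinearity, combining the integral estimate \eqref{Int.Ineq3.}--\eqref{Int.Ineq4.} with the smoothing of the heat semigroup. Let $\{\phi_n\}\subset\mathcal R_\varepsilon$ with $\phi_n(0)=u_0^n\to u_0$ in $L^2(\Omega)$, and fix $T>0$; write $\phi_n=I(u_0^n)h_n$ with $h_n(t)\in F_\varepsilon(\phi_n(t))$ a.e., where $F_\varepsilon$ denotes the selection map \eqref{F} associated with $f=f_\varepsilon$ and $q=0$. First I would reduce to a fixed initial datum: since each $\phi_n$ is a solution, $\mathrm{dist}(h_n(t),F_\varepsilon(\phi_n(t)))=0$, so \eqref{Int.Ineq3.}--\eqref{Int.Ineq4.} (exactly as in the reasoning preceding Lemma~\ref{Approx}) provide solutions $v_n\in\mathcal D_\varepsilon(u_0)$, say $v_n=I(u_0)\eta_n$ with $\eta_n(t)\in F_\varepsilon(v_n(t))$ a.e., such that $\|v_n(t)-\phi_n(t)\|_{L^2}\le\|u_0-u_0^n\|_{L^2}e^{2Ct}\to0$ uniformly on $[0,T]$. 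Writing $\eta_n(t,x)=\widetilde\eta_n(t,x)+\omega v_n(t,x)$ with $\widetilde\eta_n(t,x)\in g_\varepsilon(v_n(t,x))$, the uniform bound $|\widetilde\eta_n|\le1$ and the energy equality for the strong solution $v_n$ give, by Gronwall, a bound for $\{v_n\}$ in $C([0,T];L^2(\Omega))$ and hence for $\{\eta_n\}$ in $L^2(0,T;L^2(\Omega))$ that is uniform in $n$.

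The next step is compactness. Using the variation-of-constants formula $v_n(t)=S(t)u_0+\int_0^tS(t-s)\eta_n(s)\,ds$, where $S(\cdot)$ is the analytic semigroup generated by $-A$, the bound on $\{\eta_n\}$ in $L^2(0,T;L^2(\Omega))$ bounds the convolution term in $C([0,T];D(A^\theta))$ for any $\theta\in(0,\tfrac12)$, and the classical parabolic smoothing of the operator $\eta\mapsto\int_0^\cdot S(\cdot-s)\eta(s)\,ds$ (compact from $L^2(0,T;L^2(\Omega))$ into $C([0,T];L^2(\Omega))$, via $D(A^\theta)\hookrightarrow\hookrightarrow L^2(\Omega)$ and equicontinuity) makes $\{v_n\}$ relatively compact in $C([0,T];L^2(\Omega))$. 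Passing to a subsequence, $v_n\to\phi$ in $C([0,T];L^2(\Omega))$ (so $\phi(0)=u_0$), $v_n(t,x)\to\phi(t,x)$ for a.e.\ $(t,x)$, $\eta_n\rightharpoonup h$ in $L^2(0,T;L^2(\Omega))$, and $\widetilde\eta_n\rightharpoonup\widetilde h:=h-\omega\phi$ in $L^2((0,T)\times\Omega)$. Letting $n\to\infty$ in $v_n'+Av_n=\eta_n$ identifies $\phi=I(u_0)h$, and since $h\in L^2(0,T;L^2(\Omega))$ and $A$ is maximal monotone, $\phi$ is a strong solution of $\phi'+A\phi=h$ on $[0,T]$ with $\phi(0)=u_0$ (see \cite{Barbu}).

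It then remains to check that $h$ is an admissible selection, i.e.\ $\widetilde h(t,x)\in g_\varepsilon(\phi(t,x))$ a.e. Since $g_\varepsilon$ is upper semicontinuous with closed, convex, uniformly bounded values, $\widetilde\eta_n(t,x)\in g_\varepsilon(v_n(t,x))$ a.e., $v_n\to\phi$ a.e.\ and $\widetilde\eta_n\rightharpoonup\widetilde h$ weakly in $L^2$, the standard convergence theorem for selections of such multifunctions (the argument of \cite[Lemma~6]{MelnikValero98}; one may also use \cite{Tolstonogov} or a Mazur-type lemma) yields $\widetilde h(t,x)\in g_\varepsilon(\phi(t,x))$ a.e. Hence $h(t)\in F_\varepsilon(\phi(t))$ a.e., so $\phi|_{[0,T]}\in\mathcal D_\varepsilon(u_0)$, and $\|\phi_n-\phi\|_{C([0,T];L^2(\Omega))}\le\|\phi_n-v_n\|_{C([0,T];L^2(\Omega))}+\|v_n-\phi\|_{C([0,T];L^2(\Omega))}\to0$. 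Applying this on $[0,m]$ for $m=1,2,\dots$ and taking a diagonal subsequence produces $\phi\in\mathcal R_\varepsilon$ with $\phi(0)=u_0$ and $\phi_n\to\phi$ uniformly on compact subsets of $\mathbb R^+$, which is precisely $(K4)$.

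The points requiring most care are twofold: obtaining convergence that is uniform up to $t=0$ (this is why one first passes to the fixed initial datum $u_0$ via \eqref{Int.Ineq3.} and then exploits the smoothing of $S(\cdot)$, rather than relying only on interior parabolic estimates, which degenerate at $t=0$), and the closedness of the set of admissible selections under weak $L^2$-convergence, which rests on the convexity of the values of $g_\varepsilon$ together with its upper semicontinuity. The remaining items (the Gronwall bound, the analytic-semigroup estimates, and the passage to the limit in the linear equation) are routine.
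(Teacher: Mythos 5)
Your proposal is correct and follows the same route as the paper: reduce to the fixed initial datum $u_0$ via \eqref{Int.Ineq3.} (with $\rho\equiv 0$ because the $\phi_n$ are already solutions), use compactness of the solution set $\mathcal{D}_{\varepsilon}(u_0)$ restricted to $C([0,T],L^{2}(\Omega))$, and finish with the triangle inequality and a diagonal argument over $T=1,2,\dots$. The only difference is that where the paper simply cites this compactness from \cite[p.100]{MelnikValero98}, you prove it from scratch (uniform selection bounds, parabolic smoothing of the variation-of-constants formula, and the Mazur-type closedness of admissible selections under weak $L^{2}$-convergence), and that argument is sound.
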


\begin{proof}
	Let $u_{0}^{n}\rightarrow u_{0}$. In view of \eqref{Int.Ineq3.}, for any
	$u^{n}\left(  \text{\textperiodcentered}\right)  \in\mathcal{D}_{\varepsilon
	}\left(  u_{0}^{n}\right)  $ there exists $u_{n}\left(
	\text{\textperiodcentered}\right)  \in\mathcal{D}_{\varepsilon}\left(
	u_{0}\right)  $ such that
	\begin{equation}
		\left\Vert u^{n}(t)-u_{n}(t)\right\Vert _{L^{2}}\leq\left\Vert u_{0}^{n}
		-u_{0}\right\Vert _{L^{2}}\exp(2Ct),\,\forall t\geq0. \label{Ineq}%
	\end{equation}
	Fix $T>0$. Let $\pi_{T}\mathcal{D}_{\varepsilon}\left(  u_{0}\right)  $ be the
	restriction of $\mathcal{D}_{\varepsilon}\left(  u_{0}\right)  $ onto
	$C([0,T],L^{2}(\Omega))$. Since the set $\pi_{T}\mathcal{D}_{\varepsilon
	}\left(  u_{0}\right)  $ is compact in $C([0,T],L^{2}(\Omega))$ \cite[p.100]%
	{MelnikValero98}, passing to a subsequence we have that $u_{n}\rightarrow
	u\in\mathcal{D}_{\varepsilon}\left(  u_{0}\right)  $ in $C([0,T],L^{2}
	(\Omega))$. Thus, by \eqref{Ineq} we obtain that $u^{n}\rightarrow u$ in
	$C([0,T],L^{2}(\Omega))$. By a diagonal argument we deduce that for some
	subsequence this is true for any $T>0$, proving property $\left(  K4\right)  .
	$
\end{proof}

\bigskip

From \cite{MelnikValero98} we know that $G_{\varepsilon}$ has a global compact
invariant attractor $\mathcal{A}_{\varepsilon}$. It is clear that%
\[
\mathcal{A}\subset\mathcal{A}_{\varepsilon_{1}}\subset\mathcal{A}%
_{\varepsilon_{2}}\text{ for all }0<\varepsilon_{1}<\varepsilon_{2},
\]
where $\mathcal{A}$ is the attractor for problem \eqref{Incl}. Also, as
$\left(  K1\right)  -\left(  K4\right)  $ hold, $\mathcal{A}_{\varepsilon}$ is
characterized by the union of all bounded global trajectories \cite{KKV14}:%
\[
\mathcal{A}_{\varepsilon}=\{\phi\left(  0\right)  :\phi\text{ is a bounded
	complete trajectory of }\mathcal{R}_{\varepsilon}\}.
\]

\begin{lemma}
	\label{ConvSol}If $\varepsilon_{n}\rightarrow0^{+}$, $u_{\varepsilon_{n}}
	\in\mathcal{D}_{\varepsilon_{n}}\left(  u_{0}^{n}\right)  $ and $u_{0}
	^{n}\rightarrow u_{0}$, then up to a subsequence $u_{\varepsilon_{n}
	}\rightarrow u\in\mathcal{D}\left(  u_{0}\right)  $ uniformly on bounded sets
	of $[0,+\infty).$
\end{lemma}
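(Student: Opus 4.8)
The plan is to obtain uniform a priori bounds for the regularised solutions $u_{\varepsilon_n}$, extract a subsequence converging by compactness, and then identify the limit as a strong solution of \eqref{Incl}; the identification will be short here because the graph $g_{\varepsilon_n}$ collapses onto a single value at every point where $u\neq0$. Write $u_n:=u_{\varepsilon_n}$; by definition of $\mathcal{D}_{\varepsilon_n}(u_0^n)$ there is $h_n\in L^2_{loc}(0,\infty;L^2(\Omega))$ with $h_n(t,x)\in g_{\varepsilon_n}(u_n(t,x))+\omega u_n(t,x)$ a.e.\ and $\partial_t u_n-\Delta u_n=h_n$, $u_n(0)=u_0^n$. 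Since $|g_{\varepsilon_n}(s)|\le1$ for all $s$, we have $\|h_n(t)\|_{L^2}\le|\Omega|^{1/2}+\omega\|u_n(t)\|_{L^2}$. Testing with $u_n$, using $\|\nabla u_n\|_{L^2}^2\ge\lambda_1\|u_n\|_{L^2}^2$ and $\omega<\lambda_1=\pi^2$, a Gronwall argument gives a bound for $\|u_n(t)\|_{L^2}$ uniform in $n$ and in $t\ge0$ (recall $u_0^n\to u_0$, so $\{u_0^n\}$ is bounded), and after integration a bound for $\|u_n\|_{L^2(0,T;H_0^1(\Omega))}$ uniform in $n$. Testing with $t\,\partial_t u_n$ and using $\tfrac12\tfrac{d}{dt}(t\|\nabla u_n\|_{L^2}^2)=\tfrac12\|\nabla u_n\|_{L^2}^2+\tfrac t2\tfrac{d}{dt}\|\nabla u_n\|_{L^2}^2$, one obtains on each $[0,T]$ bounds, uniform in $n$, for $\sqrt t\,\partial_t u_n$ in $L^2(0,T;L^2(\Omega))$ and for $\sqrt t\,\nabla u_n$ in $L^\infty(0,T;L^2(\Omega))$; hence $\partial_t u_n$ is bounded in $L^2(\delta,T;L^2(\Omega))$ and $u_n$ in $L^\infty(\delta,T;H_0^1(\Omega))$ for every $0<\delta<T$.

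By the Aubin--Lions--Simon lemma, $\{u_n\}$ is then relatively compact in $C([\delta,T];L^2(\Omega))$ for every $0<\delta<T$. To control $t=0$, let $S(\cdot)$ be the contraction semigroup generated by $\Delta$ on $L^2(\Omega)$; from $u_n(t)=S(t)u_0^n+\int_0^t S(t-s)h_n(s)\,ds$ and $\|h_n(t)\|_{L^2}\le M$ (uniform, by the estimates above),
$$
\|u_n(t)-u_0^n\|_{L^2}\le\|S(t)u_0^n-u_0^n\|_{L^2}+Mt\le2\|u_0^n-u_0\|_{L^2}+\|S(t)u_0-u_0\|_{L^2}+Mt ,
$$
so $\sup_n\sup_{t\in[0,\delta]}\|u_n(t)-u_0^n\|_{L^2}\to0$ as $\delta\to0^+$. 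Combining this equicontinuity at the origin with the compactness on each $[\delta,T]$ and a diagonal argument over $T=1,2,\dots$, we get a subsequence (not relabelled) and $u\in C([0,\infty),L^2(\Omega))$ with $u(0)=u_0$ such that $u_n\to u$ in $C([0,T];L^2(\Omega))$ for every $T>0$. Passing to further subsequences we may also assume $u_n\rightharpoonup u$ in $L^2(0,T;H_0^1(\Omega))$, $h_n\rightharpoonup h$ weakly in $L^2(0,T;L^2(\Omega))$ for all $T$ (the sequence $\{h_n\}$ being bounded by the first step), and $u_n\to u$ a.e.\ in $\Omega\times(0,\infty)$; passing to the limit in $\partial_t u_n-\Delta u_n=h_n$ (tested against smooth compactly supported functions) gives $\partial_t u-\Delta u=h$ in the sense of distributions.

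It remains to identify $h$. Set $\widetilde h_n:=h_n-\omega u_n$ and $\widetilde h:=h-\omega u$, so $\widetilde h_n\rightharpoonup\widetilde h$ weakly in $L^2(0,T;L^2(\Omega))$, $|\widetilde h_n|\le1$ a.e., and $\widetilde h_n(t,x)\in g_{\varepsilon_n}(u_n(t,x))$ a.e. At any point $(t,x)$ with $u(t,x)\neq0$ and $u_n(t,x)\to u(t,x)$, for $n$ large we have $|u_n(t,x)|>\varepsilon_n$ with $u_n(t,x)$ of the same sign as $u(t,x)$, hence $g_{\varepsilon_n}(u_n(t,x))=\{\operatorname{sgn}u(t,x)\}$ and so $\widetilde h_n(t,x)=\operatorname{sgn}u(t,x)$; thus $\widetilde h_n\to\operatorname{sgn}u$ a.e.\ on $\{u\neq0\}$. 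Since $|\widetilde h_n|\le1$ and $\Omega\times(0,T)$ has finite measure, dominated convergence upgrades this to convergence in $L^2$ on $\{u\neq0\}$, and comparison with the weak limit forces $\widetilde h=\operatorname{sgn}u$ a.e.\ there; on $\{u=0\}$ we have $|\widetilde h|\le1$ a.e.\ (a weak $L^2$-limit of functions bounded by $1$). Hence $\widetilde h(t,x)\in H_0(u(t,x))$ a.e., i.e.\ $h(t)\in\partial\psi^2(u(t))$ for a.a.\ $t$. Since $\sqrt t\,\partial_t u\in L^2_{loc}((0,\infty);L^2(\Omega))$ and $u\in L^2_{loc}(0,\infty;H_0^1(\Omega))$, from $-\Delta u(t)=-\partial_t u(t)+h(t)\in L^2(\Omega)$ for a.a.\ $t$ and elliptic regularity we obtain $u(t)\in H^2(\Omega)\cap H_0^1(\Omega)$ for a.a.\ $t$, and $u$ is absolutely continuous on compact subsets of $(0,\infty)$; together with $u(0)=u_0$ this shows that $u$ is a strong solution of \eqref{Incl} on $[0,\infty)$, that is $u\in\mathcal{D}(u_0)$.

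The parts needing the most care are the uniform-in-$t$ energy bound --- where the hypothesis $\omega<\lambda_1$ is essential --- and the equicontinuity at $t=0$, which is what converts the interior parabolic-smoothing compactness on $[\delta,T]$ into uniform convergence on all of $[0,T]$. The identification step, by contrast, is short precisely because $g_{\varepsilon_n}$ was chosen so that, once $n$ is large, $g_{\varepsilon_n}(u_n)$ is single-valued and coincides with $H_0(u)$ wherever $u\neq0$, so no Minty/monotonicity argument is required.
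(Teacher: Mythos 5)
Your proof is correct, but it travels a genuinely different road from the paper's. The paper exploits the monotone nesting of the regularised graphs: since $g_{\varepsilon_n}\subset g_{\varepsilon_0}$ for $\varepsilon_n<\varepsilon_0$, every $u_{\varepsilon_n}$ is already a solution of the \emph{fixed} Lipschitz problem with parameter $\varepsilon_0$, so Lemma \ref{PropertyK4} (itself resting on the Melnik--Valero compactness of $\pi_T\mathcal{D}_{\varepsilon_0}(u_0)$ and Tolstonogov's relaxation estimates) immediately yields a uniformly convergent subsequence $u_{\varepsilon_n}\to u\in\mathcal{D}_{\varepsilon_0}(u_0)$; the only remaining work is to identify the limiting selection, which is done via weak $L^2$ convergence of $h_n$, Lemma 1.3 of Tolstonogov to see that the weak limit is the selection attached to $u$, the same pointwise observation you make on $\{u\neq0\}$, and Mazur's lemma on $\{u=0\}$. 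You instead rebuild the compactness from scratch: uniform energy bounds (making explicit where $\omega<\pi^2$ enters), the parabolic smoothing estimate from testing with $t\,\partial_t u_n$, Aubin--Lions on $[\delta,T]$, and equicontinuity at $t=0$ via the variation-of-constants formula; and you identify the selection by a.e.\ convergence plus dominated convergence and weak closedness of $\{|v|\le1\}$, which is an equivalent substitute for the Mazur argument. What the paper's route buys is brevity given the infrastructure already in place (Lemma \ref{PropertyK4} was needed anyway for $(K4)$) and no need to redo parabolic regularity; what yours buys is self-containedness --- it uses neither the graph nesting nor Tolstonogov's machinery --- at the cost of re-deriving compactness that the cited results already supply. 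Two small points to polish: the absolute continuity you obtain is on compact subsets of $(0,\infty)$, which is the correct regularity class here and matches what the paper's own solutions satisfy (cf.\ the clarification after \eqref{Sel}), and the passage from distributional to a.e.\ identity in \eqref{exist1} should invoke $u(t)\in H^2(\Omega)\cap H^1_0(\Omega)$ and $\partial_t u(t)\in L^2(\Omega)$ for a.a.\ $t$, which your estimates do provide.
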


\begin{proof}
	We fix $\varepsilon_{0}>0$ such that $\varepsilon_{n}<\varepsilon_{0}$. Since
	$u_{\varepsilon_{n}}\in\mathcal{D}_{\varepsilon_{0}}\left(  u_{0}^{n}\right)
	$ for all $n$, by Lemma \ref{PropertyK4} we obtain that up to a subsequence
	$u_{\varepsilon_{n}}\rightarrow u\in\mathcal{D}_{\varepsilon_{0}}\left(
	u_{0}\right)  $ uniformly on bounded sets of $[0,+\infty).$ Hence, $u\left(
	\text{\textperiodcentered}\right)  $ is a strong solution to problem
	\eqref{Sel} with $h\in L_{loc}^{2}(0,+\infty;L^{2}\left(  \Omega\right)  )$,
	$h\left(  t\right)  \in F_{\varepsilon_{0}}(u(t))$ for a.a. $t$, where
	$F_{\varepsilon_{0}}$ is the map \eqref{F} for $f_{\varepsilon_{0}}$.
	
	In order to prove that $u\in\mathcal{D}\left(  u_{0}\right)  $, it remains to
	show that $h\left(  t,x\right)  \in H_{0}\left(  u\left(  t,x\right)  \right)
	+\omega u\left(  t,x\right)  $ for a.a. $\left(  t,x\right)  $.
	
	The selections $h_{n}\left(  \text{\textperiodcentered}\right)  $
	corresponding to $u_{\varepsilon_{n}}\left(  \text{\textperiodcentered
	}\right)  $ in equality \eqref{Sel} are bounded by a constant $C_{T}$ in each
	interval $[0,T]$:
	\[
	\left\Vert h_{n}\left(  t\right)  \right\Vert _{L^{2}}\leq C_{T}\text{ for
		a.a. }t\in(0,T).
	\]
	In particular, this means that $h_{n}$ are integrably bounded in each interval
	and that up to a subsequence $h_{n}\rightarrow\widetilde{h}$ weakly in
	$L^{2}(0,T;L^{2}\left(  \Omega\right)  )$ for any $T>0$. We need to check that
	$\widetilde{h}=h$. Let $v_{n}\left(  \text{\textperiodcentered}\right)
	=I\left(  u_{0}\right)  h_{n}\left(  \text{\textperiodcentered}\right)  $.
	Then by inequality \eqref{Int.Ineq2.} we have that $v_{n}\rightarrow u$ in
	$C([0,T],L^{2}\left(  \Omega\right)  )$ for any $T>0$. By Lemma 1.3 in
	\cite{Tolstonogov} we deduce that $u\left(  \text{\textperiodcentered}\right)
	=I\left(  u_{0}\right)  \widetilde{h}\left(  \text{\textperiodcentered
	}\right)  $, which is possible if and only if $\widetilde{h}=h$.
	
	Denote $g\left(  t\right)  =h\left(  t\right)  -\omega u\left(  t\right)  $
	and $g_{n}\left(  t\right)  =h_{n}\left(  t\right)  -\omega u_{n}\left(
	t\right)  $. We need to prove that $g\left(  t,x\right)  \in H_{0}\left(
	u\left(  t,x\right)  \right)  $ for a.a. $\left(  t,x\right)  $. For a.a.
	$\left(  t,x\right)  $ there is $N\left(  t,x\right)  $ such that
	$g_{n}(t,x)\in H_{0}\left(  u\left(  t,x\right)  \right)  $ if $n\geq N\left(
	t,x\right)  $. Indeed, since $u_{n}\left(  t,x\right)  \rightarrow u\left(
	t,x\right)  $ for a.a. $\left(  t,x\right)  $, we define $B$ as a set which
	complementary $B^{c}$ has measure $0$ and such that $u_{n}\left(  t,x\right)
	\rightarrow u\left(  t,x\right)  $ for $u\left(  t,x\right)  \in B$. If
	$u\left(  t,x\right)  \in B$ and $u\left(  t,x\right)  >0$ ($<0$), then there
	is $N\left(  t,x\right)  $ such that $u_{n}\left(  t,x\right)  >0$ ($<0$) for
	$n\geq N\left(  t,x\right)  $. Hence, $g_{n}\left(  t,x\right)  \in
	H_{0}\left(  u_{n}(t,x)\right)  =H_{0}\left(  u\left(  t,x\right)  \right)
	=1$ ($-1$). If $u\left(  t,x\right)  \in B$ and $u\left(  t,x\right)  =0$,
	then $g_{n}\left(  t,x\right)  \in\lbrack-1,1]=H_{0}\left(  u\left(
	t,x\right)  \right)  $ for all $n$. By \cite[Proposition 1.1]{Tolstonogov} for
	a.a. $t$ there is a sequence of convex combinations
	\[
	y_{n}\left(  t\right)  =\sum_{j=1}^{N_{n}}\lambda_{j}g_{k_{j}}\left(
	t\right)  ,\ \sum_{j=1}^{N_{n}}\lambda_{j}=1,\ k_{j}\geq n,
	\]
	such that $y_{n}\left(  t\right)  \rightarrow g\left(  t\right)  $ in
	$L^{2}\left(  \Omega\right)  $. Then, as $H_{0}\left(  u\left(  t,x\right)
	\right)  $ is closed and convex, $g\left(  t,x\right)  \in H_{0}\left(
	u\left(  t,x\right)  \right)  $ for a.a. $\left(  t.x\right)  .$
\end{proof}

\begin{corollary}
	\label{ConvergGlobalTray}If $\{\phi_{\varepsilon_{n}}\}$ is a sequence of
	bounded global trajectories of $\mathcal{R}_{\varepsilon_{n}}$ and
	$\varepsilon_{n}\rightarrow0^{+}$, then there exists a subsequence
	$\{\phi_{\varepsilon_{n_{k}}}\}$ and a bounded complete trajectory $\phi$ of
	$\mathcal{R}$ such that
	\begin{equation}
		\phi_{\varepsilon_{n_{k}}}\rightarrow\phi\text{ in }C([-T,T],L^{2}\left(
		\Omega\right)  )\text{ for all }T>0. \label{ConvGlobalSol}%
	\end{equation}
	
\end{corollary}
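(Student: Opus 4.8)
The plan is to reduce the whole statement to Lemma~\ref{ConvSol}, which is a convergence result for half-line solutions, by combining it with a shift trick and a diagonal extraction to recover the behaviour on negative times. First I would fix $\varepsilon_{0}>0$ with $\varepsilon_{n}<\varepsilon_{0}$ for all $n$ (discarding finitely many terms). Then every $\phi_{\varepsilon_{n}}$ is a bounded complete trajectory of $\mathcal{R}_{\varepsilon_{n}}\subset\mathcal{R}_{\varepsilon_{0}}$, so $\phi_{\varepsilon_{n}}(t)\in\mathcal{A}_{\varepsilon_{n}}\subset\mathcal{A}_{\varepsilon_{0}}$ for every $t\in\mathbb{R}$, and $\mathcal{A}_{\varepsilon_{0}}$ is compact. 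In particular, for each fixed integer $k\geq0$ the set $\{\phi_{\varepsilon_{n}}(-k)\}_{n\in\mathbb{N}}$ is precompact in $L^{2}(\Omega)$.

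Next I would construct the limit by an inductive extraction over $k$. At stage $k=0$, pass to a subsequence so that $\phi_{\varepsilon_{n}}(0)\to x_{0}\in\mathcal{A}_{\varepsilon_{0}}$; since $\phi_{\varepsilon_{n}}\big|_{[0,+\infty)}\in\mathcal{D}_{\varepsilon_{n}}(\phi_{\varepsilon_{n}}(0))$, Lemma~\ref{ConvSol} provides a further subsequence and $u^{(0)}\in\mathcal{D}(x_{0})$ with $\phi_{\varepsilon_{n}}\to u^{(0)}$ uniformly on bounded subsets of $[0,+\infty)$. At stage $k\geq1$, starting from the subsequence produced at stage $k-1$, extract again so that $\phi_{\varepsilon_{n}}(-k)\to x_{-k}\in\mathcal{A}_{\varepsilon_{0}}$; since $\phi_{\varepsilon_{n}}(\cdot-k)\big|_{[0,+\infty)}\in\mathcal{D}_{\varepsilon_{n}}(\phi_{\varepsilon_{n}}(-k))$, Lemma~\ref{ConvSol} yields a further subsequence and $u^{(k)}\in\mathcal{D}(x_{-k})$ with $\phi_{\varepsilon_{n}}(\cdot-k)\to u^{(k)}$ uniformly on bounded subsets of $[0,+\infty)$, i.e.\ $\phi_{\varepsilon_{n}}\to u^{(k)}(\cdot+k)$ uniformly on bounded subsets of $[-k,+\infty)$. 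Taking the diagonal subsequence $\{\varepsilon_{n_{k}}\}$, the functions $t\mapsto u^{(k)}(t+k)$ agree on their common domains (each equals the pointwise limit of $\phi_{\varepsilon_{n_{k}}}(t)$ along the diagonal sequence), so setting $\phi(t):=u^{(k)}(t+k)$ for $t\geq-k$ defines a function $\phi:\mathbb{R}\to L^{2}(\Omega)$, and $\phi_{\varepsilon_{n_{k}}}\to\phi$ uniformly on $[-T,T]$ for every $T>0$ (choose $k\geq T$), which is \eqref{ConvGlobalSol}.

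Finally I would verify the stated properties of $\phi$. Boundedness is immediate since $\phi(t)\in\mathcal{A}_{\varepsilon_{0}}$ (a closed set) for all $t$. To see that $\phi$ is a complete trajectory of $\mathcal{R}$: given $\tau\in\mathbb{R}$, choose an integer $k\geq-\tau$; then for $s\geq0$ we have $\tau+s\geq-k$, so $\phi(\tau+s)=u^{(k)}(\tau+s+k)$, and since $u^{(k)}\in\mathcal{D}(x_{-k})\subset\mathcal{R}$ and $\tau+k\geq0$, property $(K2)$ gives $u^{(k)}(\tau+k+\cdot)\big|_{[0,+\infty)}\in\mathcal{R}$, which is exactly $\phi(\tau+\cdot)\big|_{[0,+\infty)}$. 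Hence $\phi$ is a bounded complete trajectory of $\mathcal{R}$ (in particular $\phi(0)\in\mathcal{A}$), as required.

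I expect the main obstacle to be the bookkeeping of the nested/diagonal extraction together with the check that the pieces $u^{(k)}$ glue into a single function on all of $\mathbb{R}$ — once the shift trick reduces the negative-time problem to Lemma~\ref{ConvSol}, the argument is routine, but it must be organised so that one single final subsequence works simultaneously for every $T>0$ and so that the glued limit genuinely belongs to $\mathcal{R}$ (a solution of problem~\eqref{Incl}) rather than merely being a complete trajectory of the semiflow $G$.
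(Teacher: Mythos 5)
Your proposal is correct and follows the same route as the paper, which simply invokes Lemma~\ref{ConvSol} together with a diagonal argument and deduces boundedness of the limit from $\phi(t)\in\mathcal{A}_{\varepsilon_{0}}$; your write-up merely makes explicit the shift to the initial times $-k$, the gluing of the limits $u^{(k)}$, and the $(K2)$-based verification that $\phi$ is a complete trajectory of $\mathcal{R}$, all of which the paper leaves implicit.
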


\begin{proof}
	Applying Lemma \ref{ConvSol} and a diagonal argument we obtain a complete
	trajectory of $\mathcal{R}$ and a subsequence such that \eqref{ConvGlobalSol}%
	\ holds. Since for $\varepsilon_{0}>0$ the complete trajectory $\phi$ belongs
	to $\mathcal{A}_{\varepsilon_{0}}$ and $\mathcal{A}_{\varepsilon_{0}}$ is
	bounded, we obtain that $\phi$ is a bounded complete trajectory of
	$\mathcal{R}$.
\end{proof}

\bigskip

We denote by $O_{\delta}(v_{0})=\{v\in X:\left\Vert v-v_{0}\right\Vert
_{L^{2}}<\delta\}$ a $\delta$-neighborhood of the point $v_{0}\in L^{2}%
(\Omega).$

We choose $\delta>0$ such that $v_{k}^{+}$ is the maximal weakly invariant set
in $O_{\delta}\left(  v_{k}^{+}\right)  $, so that $\overline{O}_{\delta
}\left(  v_{k}^{+}\right)  $ is an isolating closed neighborhood of\ the
stationary point $v_{k}^{+}$, $k\geq1$ (for $v_{k}^{-}$ the proof is the
same). For the semiflow $G_{\varepsilon}$ we define a weakly invariant set
associated to $v_{k}^{+}$ in the following way:%
\[\begin{aligned}
K_{\varepsilon}=\{\phi\left(  0\right)  :\phi\left(  \text{\textperiodcentered
}\right)  \text{ is a bounded complete trajectory of }\mathcal{R}%
_{\varepsilon}\text{ such that }\phi\left(  t\right)  \in\overline{O}_{\delta
}\left(  v_{k}^{+}\right) \ \\  \text{ for all }t\in\mathbb{R}\}.
\end{aligned}
\]

\begin{lemma}
	\label{compact}The set $K_{\varepsilon}$ is compact.
\end{lemma}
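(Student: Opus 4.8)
The plan is to show that $K_{\varepsilon}$ is a closed subset of a compact set. Recall that $G_{\varepsilon}$ possesses a global compact invariant attractor $\mathcal{A}_{\varepsilon}$, which consists of the values at time $0$ of bounded complete trajectories of $\mathcal{R}_{\varepsilon}$; since translates of bounded complete trajectories are again bounded complete trajectories, every bounded complete trajectory of $\mathcal{R}_{\varepsilon}$ takes values in $\mathcal{A}_{\varepsilon}$. Hence $K_{\varepsilon}\subset\mathcal{A}_{\varepsilon}\cap\overline{O}_{\delta}(v_{k}^{+})$, and this set is compact because $\mathcal{A}_{\varepsilon}$ is compact and $\overline{O}_{\delta}(v_{k}^{+})$ is closed. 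Therefore it suffices to prove that $K_{\varepsilon}$ is closed.

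To this end, I would take a sequence $\{x_{n}\}_{n\in\mathbb{N}}\subset K_{\varepsilon}$ with $x_{n}\rightarrow x$, and for each $n$ fix a bounded complete trajectory $\phi_{n}$ of $\mathcal{R}_{\varepsilon}$ with $\phi_{n}(0)=x_{n}$ and $\phi_{n}(\mathbb{R})\subset\overline{O}_{\delta}(v_{k}^{+})$. Since $\phi_{n}(\mathbb{R})\subset\mathcal{A}_{\varepsilon}$, for every fixed $m\in\mathbb{N}$ the sequence $\{\phi_{n}(-m)\}_{n\in\mathbb{N}}$ lies in the compact set $\mathcal{A}_{\varepsilon}\cap\overline{O}_{\delta}(v_{k}^{+})$, so by a diagonal extraction I may assume $\phi_{n}(-m)\rightarrow y_{m}$ for every $m$, with $y_{0}=x$. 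For each $m$ the translate $\xi_{n}^{m}(\cdot):=\phi_{n}(-m+\cdot)\big|_{[0,+\infty)}$ belongs to $\mathcal{R}_{\varepsilon}$ (because $\phi_{n}$ is a complete trajectory of $\mathcal{R}_{\varepsilon}$) and $\xi_{n}^{m}(0)=\phi_{n}(-m)\rightarrow y_{m}$; applying property (K4) for $\mathcal{R}_{\varepsilon}$ (Lemma \ref{PropertyK4}) along a further diagonal subsequence, there exist $\xi^{m}\in\mathcal{R}_{\varepsilon}$ with $\xi^{m}(0)=y_{m}$ and $\xi_{n}^{m}\rightarrow\xi^{m}$ uniformly on compact subsets of $[0,+\infty)$. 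As $\overline{O}_{\delta}(v_{k}^{+})$ is closed and $\xi_{n}^{m}(t)=\phi_{n}(t-m)\in\overline{O}_{\delta}(v_{k}^{+})$ for all $t\geq0$, this gives $\xi^{m}([0,+\infty))\subset\overline{O}_{\delta}(v_{k}^{+})$.

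I would then verify the compatibility relations $\xi^{m+1}(1+t)=\xi^{m}(t)$ for all $t\geq0$, which hold because the identity $\xi_{n}^{m+1}(1+t)=\xi_{n}^{m}(t)$ is true for every $n$ and the extracted subsequence is common to all $m$. Consequently $\phi(t):=\xi^{m}(t+m)$ for $t\geq-m$ defines a function $\phi:\mathbb{R}\rightarrow L^{2}(\Omega)$ with $\phi(0)=y_{0}=x$ and $\phi(\mathbb{R})\subset\overline{O}_{\delta}(v_{k}^{+})$. Using the translation property (K2), for any $\tau\in\mathbb{R}$ and $m$ with $-m\leq\tau$ we get $\phi(\tau+\cdot)\big|_{[0,+\infty)}=\xi^{m}(\tau+m+\cdot)\big|_{[0,+\infty)}\in\mathcal{R}_{\varepsilon}$, so $\phi$ is a complete trajectory of $\mathcal{R}_{\varepsilon}$; it is bounded since $\overline{O}_{\delta}(v_{k}^{+})$ is bounded. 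Hence $x=\phi(0)\in K_{\varepsilon}$, which shows that $K_{\varepsilon}$ is closed, and therefore compact.

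The main obstacle is the construction of this limiting complete trajectory: one must organise the two diagonal extractions (over $m$, and over the (K4)-subsequences) so that a single subsequence works simultaneously for all $m$, and then check that the resulting pieces glue consistently into a genuine complete trajectory of $\mathcal{R}_{\varepsilon}$ that never leaves the closed neighborhood. This is the same mechanism used in part (b1) of Proposition \ref{RybTheo4.5} (following Lemma 5 of \cite{HenVal}), here slightly simpler because we are merely translating a fixed family of complete trajectories rather than building one by concatenation.
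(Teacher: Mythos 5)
Your proof is correct and follows essentially the same route as the paper: relative compactness comes from $K_{\varepsilon}\subset\mathcal{A}_{\varepsilon}$, and closedness is obtained by passing to a limiting bounded complete trajectory via (K4) and a diagonal argument (the paper delegates this extraction to Lemma \ref{PropertyK4} and the argument of Corollary \ref{ConvergGlobalTray}, whereas you write it out explicitly). The only difference is expository detail, not substance.
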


\begin{proof}
	Since $K_{\varepsilon}\subset\mathcal{A}_{\varepsilon}$, it is clearly
	relatively compact. Thus, we need to prove just that it is closed. Let
	$y_{n}\rightarrow y$, where $y_{n}\in K_{\varepsilon}$. Then $y_{n}=\phi
	_{n}\left(  0\right)  $ for some bounded complete trajectory $\phi_{n}$.
	By Lemma \ref{PropertyK4} and arguing as in Corollary
		\ref{ConvergGlobalTray} we obtain that up to a subsequence $\phi
	_{n}\rightarrow\phi$ in $C([-T,T],L^{2}\left(  \Omega\right)  )$ for all
	$T>0$, where $\phi$ is a bounded complete trajectory of $\mathcal{R}%
	_{\varepsilon}$. Obviously, $\phi\left(  t\right)  \in\overline{O}_{\delta
	}\left(  v_{k}^{+}\right)  $ for all $t\in\mathbb{R}$. Hence, $y\in
	K_{\varepsilon}.$
\end{proof}

\begin{lemma}
	\label{deltamedios}There is $\varepsilon_{0}>0$ such that $K_{\varepsilon
	}\subset O_{\delta/2}\left(  v_{k}^{+}\right)  $ for all $\varepsilon
	\leq\varepsilon_{0}.$
\end{lemma}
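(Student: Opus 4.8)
The plan is to argue by contradiction. Suppose the conclusion fails. Then there exist a sequence $\varepsilon_{n}\rightarrow0^{+}$ and points $y_{n}\in K_{\varepsilon_{n}}$ with $\left\Vert y_{n}-v_{k}^{+}\right\Vert _{L^{2}}\geq\delta/2$. By the definition of $K_{\varepsilon_{n}}$, each $y_{n}=\phi_{n}(0)$ for some bounded complete trajectory $\phi_{n}$ of $\mathcal{R}_{\varepsilon_{n}}$ with $\phi_{n}(t)\in\overline{O}_{\delta}(v_{k}^{+})$ for all $t\in\mathbb{R}$. Applying Corollary \ref{ConvergGlobalTray}, after passing to a subsequence (not relabelled) we have $\phi_{n}\rightarrow\phi$ in $C([-T,T],L^{2}(\Omega))$ for every $T>0$, where $\phi$ is a bounded complete trajectory of $\mathcal{R}$. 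Since $\overline{O}_{\delta}(v_{k}^{+})$ is a closed ball in $L^{2}(\Omega)$, we get $\phi(t)\in\overline{O}_{\delta}(v_{k}^{+})$ for all $t\in\mathbb{R}$, and passing to the limit in $\left\Vert \phi_{n}(0)-v_{k}^{+}\right\Vert _{L^{2}}\geq\delta/2$ yields $\left\Vert \phi(0)-v_{k}^{+}\right\Vert _{L^{2}}\geq\delta/2>0$.

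Next I would use the gradient-like structure of \eqref{Incl}. Being a bounded complete trajectory of $\mathcal{R}$, $\phi$ lies in the global attractor $\mathcal{A}$, which is compact in $C^{1}([0,1])$; moreover, by the results of \cite{ARV06} recalled above, there are fixed points $z_{\pm}$ with $\phi(t)\rightarrow z_{-}$ as $t\rightarrow-\infty$ and $\phi(t)\rightarrow z_{+}$ as $t\rightarrow+\infty$, and by the $C^{1}$-compactness of $\mathcal{A}$ these convergences also hold in $C^{1}([0,1])$, hence in $H_{0}^{1}(\Omega)$. Both limits $z_{\pm}$ belong to $\overline{O}_{\delta}(v_{k}^{+})$. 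At this point I would note that we may shrink the original $\delta$ from the start: any sufficiently small $\delta$ still makes $v_{k}^{+}$ the maximal weakly invariant set in $O_{\delta}(v_{k}^{+})$, and since the fixed points of \eqref{Incl} accumulate only at $0\neq v_{k}^{+}$, we may in addition assume that $v_{k}^{+}$ is the only fixed point contained in $\overline{O}_{\delta}(v_{k}^{+})$. Then $z_{-}=z_{+}=v_{k}^{+}$.

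Finally I would invoke the Lyapunov function $E$ from \eqref{LyapunovFunction}. The map $t\mapsto E(\phi(t))$ is continuous on $\mathbb{R}$ (by the regularity of the solutions), nonincreasing (since $E$ is a Lyapunov function for $\mathcal{R}$), and bounded because $\phi(\mathbb{R})\subset\mathcal{A}$. Using the $C^{1}$-convergences $\phi(t)\rightarrow v_{k}^{+}$ as $t\rightarrow\pm\infty$ and the continuity of $E$ on $H_{0}^{1}(\Omega)$ (combined with $C^{1}$-compactness of $\mathcal{A}$ to identify the limits of $E(\phi(t_{n}))$ along arbitrary sequences $t_{n}\rightarrow\pm\infty$), we obtain $\lim_{t\rightarrow-\infty}E(\phi(t))=E(v_{k}^{+})=\lim_{t\rightarrow+\infty}E(\phi(t))$, so $E(\phi(t))\equiv E(v_{k}^{+})$ on $\mathbb{R}$. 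Since $E$ is a strict Lyapunov function for \eqref{Incl} (it is constant along a strong solution only if that solution is stationary), this forces $\phi\equiv v_{k}^{+}$, contradicting $\left\Vert \phi(0)-v_{k}^{+}\right\Vert _{L^{2}}\geq\delta/2>0$. Hence no such sequences exist, and the asserted $\varepsilon_{0}>0$ exists.

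The main obstacle is the last step: excluding a nonconstant complete trajectory of $\mathcal{R}$ that stays in $\overline{O}_{\delta}(v_{k}^{+})$ and is homoclinic to $v_{k}^{+}$. Weak invariance together with maximality of $v_{k}^{+}$ in the \emph{open} set $O_{\delta}(v_{k}^{+})$ is not directly enough, because the limiting trajectory could a priori touch $\partial O_{\delta}(v_{k}^{+})$; it is the rigidity supplied by the strict Lyapunov function that closes the argument. A minor technical point is the admissible shrinking of $\delta$ so that the closed ball contains no other fixed point, which is legitimate precisely because the stationary points of \eqref{Incl} accumulate only at $0$.
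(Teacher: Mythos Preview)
Your proof is correct but takes a longer route than the paper at the decisive step. After obtaining the bounded complete trajectory $\phi$ of $\mathcal{R}$ with $\phi(\mathbb{R})\subset\overline{O}_{\delta}(v_{k}^{+})$ and $\phi(0)\neq v_{k}^{+}$, the paper concludes immediately: by the choice of $\delta$ recorded just before Lemma~\ref{compact}, $\overline{O}_{\delta}(v_{k}^{+})$ is an isolating \emph{closed} neighborhood of $v_{k}^{+}$, i.e.\ $v_{k}^{+}$ is the maximal weakly invariant set already in the \emph{closed} ball (one simply takes $\delta$ small enough that $\overline{O}_{\delta}(v_{k}^{+})$ sits inside the open isolating neighborhood furnished by the definition of isolated weakly invariant set). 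Hence any bounded complete trajectory of $\mathcal{R}$ contained in $\overline{O}_{\delta}(v_{k}^{+})$ must equal $v_{k}^{+}$, and the contradiction is immediate. Your concern that $\phi$ might touch $\partial O_{\delta}(v_{k}^{+})$ is therefore already absorbed by the setup; the detour through the gradient structure and the strict Lyapunov function $E$ is valid but not needed here. On the organizational side, your formulation is in one respect cleaner than the paper's: by writing $y_{n}=\phi_{n}(0)$ from the outset you avoid the paper's auxiliary times $t_{n}$ and the accompanying case distinction on whether $\{t_{n}\}$ is bounded.
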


\begin{proof}
	By contradiction, if this is not true, there is a sequence of bounded global
	trajectories $\phi_{\varepsilon_{n}}$ of $\mathcal{R}_{\varepsilon_{n}}$,
	where $\varepsilon_{n}\rightarrow0^{+}$, and times $t_{n}$ such that
	$\phi_{\varepsilon_{n}}\left(  \mathbb{R}\right)  \subset\overline{O}_{\delta
	}\left(  v_{k}^{+}\right)  $ and $\phi\left(  t_{n}\right)  \not \in
	O_{\delta/2}\left(  v_{k}^{+}\right)  $. Making use of Corollary
	\ref{ConvergGlobalTray} and the fact that $v_{k}^{+}$ is the unique bounded
	complete trajectory in $O_{\delta}\left(  v_{k}^{+}\right)  $ for
	$\mathcal{R}$, we conclude that $\phi_{\varepsilon_{n}}\rightarrow v_{k}^{+}$
	in $C([-T,T],L^{2}\left(  \Omega\right)  )$ for all $T>0$. Suppose
		that a subsequence tends to $+\infty$. Then we define the sequence
	$v_{\varepsilon_{n}}\left(  \text{\textperiodcentered}\right)  =\phi
	_{\varepsilon_{n}}\left(  \text{\textperiodcentered}+t_{n}\right)  $, which
	again by Corollary \ref{ConvergGlobalTray} converges in $C([-T,T],L^{2}\left(
	\Omega\right)  )$ to a bounded global trajectory $\phi$ of $\mathcal{R}$ such
	that $\phi\left(  \mathbb{R}\right)  \subset\overline{O}_{\delta}\left(
	v_{k}^{+}\right)  $, so that $\phi=v_{k}^{+}$. But then $v_{\varepsilon_{n}%
	}\left(  0\right)  =\phi_{\varepsilon_{n}}\left(  t_{n}\right)  \rightarrow
	v_{k}^{+}$, which is a contradiction. But if $\{t_{n}\}$ is bounded by a
	similar argument we obtain a contradiction.
\end{proof}

\begin{lemma}
	\label{maxInv}There is $\varepsilon_{0}>0$ such that $K_{\varepsilon}$ is the
	maximal weakly invariant set in $\overline{O}_{\delta}\left(  v_{k}%
	^{+}\right)  $ for any $\varepsilon\leq\varepsilon_{0}.$ Hence,$\ \overline
	{O}_{\delta}\left(  v_{k}^{+}\right)  $ is an isolating neighborhood for
	$K_{\varepsilon}$.
\end{lemma}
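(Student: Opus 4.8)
The plan is to verify that $K_{\varepsilon}$ meets the two defining requirements of ``the maximal weakly invariant set in $\overline{O}_{\delta}(v_{k}^{+})$'' and then to combine this with Lemmas \ref{compact} and \ref{deltamedios} to conclude that $\overline{O}_{\delta}(v_{k}^{+})$ is an isolating neighborhood for it. First I would check that $K_{\varepsilon}$ is weakly invariant. Note that $K_{\varepsilon}\neq\emptyset$, since the constant trajectory at $v_{k}^{+}$ witnesses $v_{k}^{+}\in K_{\varepsilon}$. Given $x\in K_{\varepsilon}$, take the bounded complete trajectory $\phi$ of $\mathcal{R}_{\varepsilon}$ with $\phi(0)=x$ and $\phi(\mathbb{R})\subset\overline{O}_{\delta}(v_{k}^{+})$ provided by the definition of $K_{\varepsilon}$. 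For each $t\in\mathbb{R}$ the shifted map $\phi(t+\,\cdot\,)$ is again a complete trajectory of $\mathcal{R}_{\varepsilon}$ (directly from the definition of a complete trajectory, or from $(K2)$), is bounded, and has range contained in $\overline{O}_{\delta}(v_{k}^{+})$; hence $\phi(t)\in K_{\varepsilon}$, so $K_{\varepsilon}$ is weakly invariant.

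Next I would show that $K_{\varepsilon}$ contains every weakly invariant subset of $\overline{O}_{\delta}(v_{k}^{+})$. If $A\subset\overline{O}_{\delta}(v_{k}^{+})$ is weakly invariant, then each $x\in A$ lies on a complete trajectory $\phi$ of $\mathcal{R}_{\varepsilon}$ with $\phi(\mathbb{R})\subset A\subset\overline{O}_{\delta}(v_{k}^{+})$; since $\overline{O}_{\delta}(v_{k}^{+})$ is a bounded subset of $L^{2}(\Omega)$, this $\phi$ is a bounded complete trajectory and therefore $x\in K_{\varepsilon}$, i.e. $A\subset K_{\varepsilon}$. Together with the first step this shows that $K_{\varepsilon}$ is indeed the maximal weakly invariant set in $\overline{O}_{\delta}(v_{k}^{+})$, for every $\varepsilon>0$.

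Finally, to get the isolating neighborhood property I would take $\varepsilon_{0}>0$ as in Lemma \ref{deltamedios}, so that $K_{\varepsilon}\subset O_{\delta/2}(v_{k}^{+})$ for all $\varepsilon\leq\varepsilon_{0}$. Since $O_{\delta/2}(v_{k}^{+})\subset O_{\delta}(v_{k}^{+})\subset int(\overline{O}_{\delta}(v_{k}^{+}))$, we get $K_{\varepsilon}\subset int(\overline{O}_{\delta}(v_{k}^{+}))$; moreover $O_{\delta}(v_{k}^{+})$ is an open neighborhood of $K_{\varepsilon}$ in which $K_{\varepsilon}$ is still the maximal weakly invariant set (any weakly invariant subset of $O_{\delta}(v_{k}^{+})$ is also one of $\overline{O}_{\delta}(v_{k}^{+})$, hence contained in $K_{\varepsilon}$), and $K_{\varepsilon}$ is closed by Lemma \ref{compact}. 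Thus $K_{\varepsilon}$ is a closed isolated weakly invariant set for $G_{\varepsilon}$ and $\overline{O}_{\delta}(v_{k}^{+})$ is a closed isolating neighborhood of it. I do not anticipate a substantial obstacle: the only slightly delicate points are the verification of weak invariance of $K_{\varepsilon}$ (which uses boundedness of $\overline{O}_{\delta}(v_{k}^{+})$ to turn ``complete trajectory staying in $\overline{O}_{\delta}(v_{k}^{+})$'' into ``bounded complete trajectory'') and the elementary topological fact $O_{\delta}(v_{k}^{+})\subset int(\overline{O}_{\delta}(v_{k}^{+}))$ used to place $K_{\varepsilon}$ in the interior.
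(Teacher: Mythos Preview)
Your proof is correct and follows essentially the same approach as the paper: you invoke Lemmas \ref{compact} and \ref{deltamedios} to obtain closedness of $K_{\varepsilon}$ and the inclusion $K_{\varepsilon}\subset int(\overline{O}_{\delta}(v_{k}^{+}))$, and you verify that $K_{\varepsilon}$ is the maximal weakly invariant set in $\overline{O}_{\delta}(v_{k}^{+})$. The paper's own proof is more terse (it simply declares the maximality ``obvious''), whereas you spell out both the weak invariance of $K_{\varepsilon}$ via shifting trajectories and the maximality via the boundedness of $\overline{O}_{\delta}(v_{k}^{+})$, but the logical content is the same.
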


\begin{proof}
	In view of Lemmas \ref{compact} and \ref{deltamedios}, $K_{\varepsilon}$ is
	closed and $\overline{O}_{\delta}\left(  v_{k}^{+}\right)  $ is a neighborhood
	of $K_{\varepsilon}$ such that $K_{\varepsilon}\subset int(\overline
	{O}_{\delta}\left(  v_{k}^{+}\right)  )$. It is obvious that $K_{\varepsilon}$
	is the maximal weakly invariant set in $\overline{O}_{\delta}\left(  v_{k}%
	^{+}\right)  .$
\end{proof}

\begin{remark}
	\label{StrAdm}Since the semiflows $G$, $G_{\varepsilon}$ possess a compact
	global attractor, it is clear that any neighborhood (in particular
	$\overline{O}_{\delta}\left(  v_{k}^{+}\right)  $) is admissible.
\end{remark}

\bigskip

We are now ready to prove the existence of an isolating block.

\begin{theorem}
	The stationary points $v_{k}^{\pm}$, $k\geq1$, possess an isolating block.
\end{theorem}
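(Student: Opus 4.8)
The plan is to deduce the statement from Theorem~\ref{theo:exist_block_subsemiflow}, taking the original semiflow $G$ (with solution set $\mathcal{R}$) as the ``small'' system and, for a suitably small $\varepsilon>0$, the approximating semiflow $G_{\varepsilon}$ (with solution set $\mathcal{R}_{\varepsilon}$) as the ``large'' system $\widetilde{G}\supset G$. It suffices to construct an isolating block for $v_{k}^{+}$: the map $u\mapsto-u$ sends solutions of \eqref{Incl} to solutions of \eqref{Incl} (since $H_{0}$ is odd) and sends $v_{k}^{+}$ to $v_{k}^{-}$, so a block for $v_{k}^{+}$ yields a block for $v_{k}^{-}$.

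First I would fix $k\geq1$ and record the data on the $G$-side: the singleton $\{v_{k}^{+}\}$ is a closed isolated weakly invariant set for $G$, and $N=\overline{O}_{\delta}(v_{k}^{+})$ is a closed isolating neighborhood of it, which is $G$-admissible by Remark~\ref{StrAdm}. Both $\mathcal{R}\subset\mathcal{R}_{\varepsilon}$ satisfy $(K1)$--$(K4)$ (Lemma~\ref{PropertyK4}) and $G\subset G_{\varepsilon}$. Next I would fix $\varepsilon\leq\varepsilon_{0}$ with $\varepsilon_{0}$ as in Lemma~\ref{maxInv}; then $K_{\varepsilon}$ is weakly invariant (being a union of complete trajectories of $\mathcal{R}_{\varepsilon}$), compact by Lemma~\ref{compact}, and isolated with isolating neighborhood $N=\overline{O}_{\delta}(v_{k}^{+})$ by Lemma~\ref{maxInv}; this $N$ is $G_{\varepsilon}$-admissible, again by Remark~\ref{StrAdm}.

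It then remains to verify the two hypotheses of Theorem~\ref{theo:exist_block_subsemiflow} that link the two systems. For $K\subset\widetilde{K}$, i.e. $v_{k}^{+}\in K_{\varepsilon}$, I would observe that $H_{0}(s)\subseteq g_{\varepsilon}(s)$ for every $s\in\mathbb{R}$, so every stationary solution of \eqref{Incl} is also stationary for the $\varepsilon$-problem; hence the constant function $\phi(t)\equiv v_{k}^{+}$ is a bounded complete trajectory of $\mathcal{R}_{\varepsilon}$ contained in $\overline{O}_{\delta}(v_{k}^{+})$, which forces $v_{k}^{+}\in K_{\varepsilon}$. For the remaining condition, $\mathcal{R}_{\varepsilon}$ satisfies $(K5)$ with respect to $K_{\varepsilon}$ because, by Corollary~\ref{InclK5}, $(K5)$ holds for $\mathcal{R}_{\varepsilon}$ on all of $L^{2}(\Omega)$, hence on any open neighborhood of $K_{\varepsilon}$.

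With these checks in place, Theorem~\ref{theo:exist_block_subsemiflow} produces an isolating block $B$ for $K=\{v_{k}^{+}\}$, and the symmetry argument of the first paragraph supplies one for $v_{k}^{-}$. I do not expect a genuine obstacle, as the analytical content has already been packed into the preceding lemmas; the only steps requiring attention are choosing $\varepsilon$ small enough for Lemma~\ref{maxInv} to apply and verifying the elementary inclusion $H_{0}(s)\subseteq g_{\varepsilon}(s)$, which is what places $v_{k}^{+}$ inside $K_{\varepsilon}$.
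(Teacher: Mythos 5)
Your proposal is correct and follows exactly the route of the paper's (one-line) proof: apply Theorem~\ref{theo:exist_block_subsemiflow} with $K=\{v_k^{+}\}$, $\widetilde K=K_{\varepsilon}$, $N=\overline{O}_{\delta}(v_k^{+})$, using Lemma~\ref{maxInv}, Remark~\ref{StrAdm} and Corollary~\ref{InclK5}; the extra details you supply (the inclusion $H_0(s)\subseteq g_{\varepsilon}(s)$ giving $v_k^{+}\in K_{\varepsilon}$, and the odd symmetry handling $v_k^{-}$) are accurate and consistent with what the paper leaves implicit.
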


\begin{proof}
	It is a consequence of Lemma \ref{maxInv}, Remark \ref{StrAdm} and Theorem
	\ref{theo:exist_block_subsemiflow}.
\end{proof}

\subsection{Uniqueness of solutions}

In this section we will prove a general result on uniqueness of solutions
which allow us to obtain that in a suitable neighborhood of the fixed points
$v_{k}^{\pm}$, $k\geq1$, the solutions are unique while they remain inside it.
In particular, the solutions starting at the fixed points $v_{k}^{\pm}$ are unique.

The function $v\in H_{0}^{1}\left(  \Omega\right)  $ is non-degenerate if
there is $C>0$ and $\alpha_{0}>0$ such that
\begin{equation}
	\mu\{x\in\left(  0,1\right)  :\left\vert v\left(  x\right)  \right\vert
	\leq\alpha\}\leq C\alpha\text{ for all }\alpha\in\left(  0,\alpha_{0}\right)
	, \label{Nondegenerate}%
\end{equation}
where $\mu$ stands for the Lebesgue measure in $\mathbb{R}$. A strong solution
$u:[0,T]\rightarrow H_{0}^{1}\left(  \Omega\right)  $ is said to be
non-degenerate if there are $C>0$ and $\alpha_{0}>0$ (independent on $t$) such
that
\begin{equation}
	\mu\{x\in\left(  0,1\right)  :\left\vert u\left(  t,x\right)  \right\vert
	\leq\alpha\}\leq C\alpha\text{ for all }\alpha\in\left(  0,\alpha_{0}\right)
	\text{ and }t\in\lbrack0,T]. \label{Nondegenerate2}%
\end{equation}

For $z\in\mathbb{R}$ denote $z^{+}=\max\{0,z\}.$

\begin{lemma}
	\label{IneqH0}Let $u_{1},u_{2}\in L^{\infty}(0,1)$ and let either $u_{1}$ or
	$u_{2}$ be non-degenerate. Then for any $z_{1},z_{2}\in L^{\infty}(0,1)$
	satisfying $z_{i}(x)\in H_{0}(u_{i}(x))$, for a.a. $x\in(0,1)$, $i=1,2$, we
	have 
		\[
		\int_{0}^{1}(z_{1}(x)-z_{2}(x))(u_{1}(x)-u_{2}(x))^{+}dx\leq2D\left\Vert
		(u_{1}-u_{2})^{+}\right\Vert _{L^{\infty}}^{2},
		\]
		where $D=\max\{C,1/\alpha_{0}\}$ and $C,\alpha_{0}$ are the constants in
		\eqref{Nondegenerate} for $u_{1}$.
\end{lemma}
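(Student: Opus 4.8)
The plan is to localize the integrand to the thin layer $L_{\lambda}=\{x\in(0,1):|u_{1}(x)|\le\lambda\}$, where $\lambda:=\|(u_{1}-u_{2})^{+}\|_{L^{\infty}}$, and then to control $\mu(L_{\lambda})$ by \eqref{Nondegenerate}. If $\lambda=0$ the left-hand side vanishes, so assume $\lambda>0$.

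The first step is the elementary monotonicity of the Heaviside graph: if $u_{1}(x)>u_{2}(x)$, then every $z_{1}(x)\in H_{0}(u_{1}(x))$ and $z_{2}(x)\in H_{0}(u_{2}(x))$ satisfy $0\le z_{1}(x)-z_{2}(x)\le 2$, and in fact $z_{1}(x)=z_{2}(x)$ unless $u_{1}(x)\ge 0\ge u_{2}(x)$. Thus, setting $N=\{x\in(0,1):(z_{1}(x)-z_{2}(x))(u_{1}(x)-u_{2}(x))^{+}\neq0\}$, a short case distinction on the signs of $u_{1}(x)$ and $u_{2}(x)$ shows that $x\in N$ forces $u_{1}(x)>u_{2}(x)$ together with $u_{1}(x)\ge0\ge u_{2}(x)$; consequently $0\le u_{1}(x)\le u_{1}(x)-u_{2}(x)\le\lambda$, so $N\subseteq L_{\lambda}$ up to a null set. (When it is $u_{2}$ that is non-degenerate one argues symmetrically, obtaining $|u_{2}(x)|\le\lambda$ on $N$.)

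It then remains to estimate. On $N$ we bound crudely $0\le(z_{1}(x)-z_{2}(x))(u_{1}(x)-u_{2}(x))^{+}\le 2\lambda$, using $z_{i}(x)\in[-1,1]$ and $(u_{1}-u_{2})^{+}\le\lambda$. For the measure of $N$: if $\lambda<\alpha_{0}$ then $\mu(N)\le\mu(L_{\lambda})\le C\lambda$ by \eqref{Nondegenerate}, while if $\lambda\ge\alpha_{0}$ then $\mu(N)\le\mu((0,1))=1\le\lambda/\alpha_{0}$; in either case $\mu(N)\le D\lambda$ with $D=\max\{C,1/\alpha_{0}\}$. Since the integrand is supported on $N$,
\[
\int_{0}^{1}(z_{1}-z_{2})(u_{1}-u_{2})^{+}\,dx=\int_{N}(z_{1}-z_{2})(u_{1}-u_{2})^{+}\,dx\le 2\lambda\,\mu(N)\le 2D\lambda^{2},
\]
which is exactly the asserted inequality.

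The only point requiring care is the case analysis identifying $N$, specifically the borderline situations $u_{1}(x)=0$ or $u_{2}(x)=0$, where $H_{0}$ is genuinely set-valued and the selections $z_{i}$ are unconstrained; the key observation there is that in each such case the relevant function value is exactly $0$ and hence automatically lies in the layer $L_{\lambda}$. Everything else is routine.
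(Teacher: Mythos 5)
Your proof is correct and follows essentially the same route as the paper's: both arguments reduce the integral to the set where $u_{1}\ge 0\ge u_{2}$ (where alone the selections can differ), observe that there $|u_{1}|\le\|(u_{1}-u_{2})^{+}\|_{L^{\infty}}$, and invoke non-degeneracy to bound the measure of that set, with the trivial bound $\mu\le 1\le\lambda/\alpha_{0}$ covering $\lambda\ge\alpha_{0}$. The only (harmless) difference is organizational: the paper splits into the cases $\|u_{1}-u_{2}\|_{L^{\infty}}\ge\alpha_{0}$ and $<\alpha_{0}$ at the outset, whereas you fold both into a single measure estimate $\mu(N)\le D\lambda$, which is slightly cleaner.
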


\begin{proof}
	If $\left\Vert u_{1}-u_{2}\right\Vert _{L^{\infty}}\geq\alpha_{0}$, then%
	\[
	\int_{0}^{1}(z_{1}(x)-z_{2}(x))(u_{1}(x)-u_{2}(x))^{+}dx\leq2\left\Vert
	(u_{1}-u_{2})^{+}\right\Vert _{L^{\infty}}\leq\frac{2}{\alpha_{0}}\left\Vert
	(u_{1}-u_{2})^{+}\right\Vert _{L^{\infty}}^{2},
	\]
	so let $\left\Vert u_{1}-u_{2}\right\Vert _{L^{\infty}}<\alpha_{0}$. We set%
	\begin{align*}
		A^{i}  &  =\{x\in(0,1):u_{i}(x)=0\},\\
		\Omega_{+}^{i}  &  =\{x\in(0,1):u_{i}(x)>0\},\\
		\Omega_{-}^{i}  &  =\{x\in(0,1):u_{i}(x)<0\},\\
		I  &  =\{x\in(0,1):u_{1}(x)>u_{2}(x)\}.
	\end{align*}
	Hence, $z_{1}(x)=z_{2}(x)$ for $x\in I_{2}=((\Omega_{+}^{1}\cap\Omega_{+}%
	^{2})\cup(\Omega_{-}^{1}\cap\Omega_{-}^{2}))\cap I$. Putting $I_{1}%
	=I\backslash I_{2}$, we have%
	\begin{align*}
		\int_{0}^{1}(z_{1}(x)-z_{2}(x))(u_{1}(x)-u_{2}(x))^{+}dx  &  =\int_{I_{1}%
		}(z_{1}(x)-z_{2}(x))(u_{1}(x)-u_{2}(x))^{+}dx\\
		&  \leq2\left\Vert (u_{1}-u_{2})^{+}\right\Vert _{L^{\infty}}\mu(I_{1}).
	\end{align*}

	We observe that $I_{1}=(A^{1}\cup A^{2}\cup(\Omega_{+}^{1}\cap\Omega_{-}%
	^{2}))\cap I$. Since
	\[
	0\leq u_{1}(x)\leq u_{2}(x)+\left\Vert u_{1}-u_{2}\right\Vert _{L^{\infty}%
	}\leq\left\Vert u_{1}-u_{2}\right\Vert _{L^{\infty}}\text{ for }x\in I_{1},
	\]
	if $u_{1}$ is non-degenerate, we obtain%
	\[
	\mu(I_{1})\leq\mu\{x:\left\vert u_{1}(x)\right\vert \leq\left\Vert u_{1}%
	-u_{2}\right\Vert _{L^{\infty}}\}\leq C\left\Vert u_{1}-u_{2}\right\Vert
	_{L^{\infty}}.
	\]
	In the same way, if $u_{2}$ is non-degenerate, then%
	\[
	\mu(I_{1})\leq\mu\{x:\left\vert u_{2}(x)\right\vert \leq\left\Vert u_{1}%
	-u_{2}\right\Vert _{L^{\infty}}\}\leq C\left\Vert u_{1}-u_{2}\right\Vert
	_{L^{\infty}}.
	\]

	Hence,%
	\[
	\int_{0}^{1}(z_{1}(x)-z_{2}(x))(u_{1}(x)-u_{2}(x))^{+}dx\leq2C\left\Vert
	u_{1}-u_{2}\right\Vert _{L^{\infty}}^{2}.
	\]

	Putting $D=\max\{C,1/\alpha_{0}\}$ the result follows.
\end{proof}

\begin{remark}
	\label{IneqH0Remark}Lemma \ref{IneqH0} is true if we change $\left(
	0,1\right)  $ by an arbitrary interval $\left(  0,\gamma\right)  .$
\end{remark}

For $z_{1},z_{2}\in H_{0}^{1}(\Omega)$ we say that $z_{1}\leq z_{2}$ if
$z_{1}\left(  x\right)  \leq z_{2}\left(  x\right)  $ for all $x\in
\lbrack0,1]$.

\begin{theorem}
	Let $u_{0}\leq v_{0}$, $u_{0},v_{0}\in H_{0}^{1}(\Omega)$. If
	$u,v:[0,T]\rightarrow H_{0}^{1}(\Omega)$ are two strong solutions and either
	$u$ or $v$ is degenerate on $[0,T]$, then $u(t)\leq v(t)$ for any $t\in
	\lbrack0,T].$
\end{theorem}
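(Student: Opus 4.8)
The plan is to reduce the statement to a standard parabolic comparison (energy) estimate for the difference $w=u-v$, combined with the one‑dimensional measure estimate of Lemma \ref{IneqH0}. Let $h$ and $g$ be selections associated with the strong solutions $u$ and $v$, so that, writing the equations in the form \eqref{exist2},
\[
\frac{\partial u}{\partial t}-\frac{\partial^{2}u}{\partial x^{2}}-h(t)=\omega\,u(t),\qquad\frac{\partial v}{\partial t}-\frac{\partial^{2}v}{\partial x^{2}}-g(t)=\omega\,v(t)\quad\text{for a.a. }t\in(0,T),
\]
with $h(t,x)\in H_{0}(u(t,x))$ and $g(t,x)\in H_{0}(v(t,x))$ a.e., hence $\|h(t)\|_{L^{\infty}},\|g(t)\|_{L^{\infty}}\le1$. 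Since $u_{0},v_{0}\in H_{0}^{1}(\Omega)$, the regularity of strong solutions recalled above gives $u,v\in C([0,T],H_{0}^{1}(\Omega))$; therefore $w=u-v\in C([0,T],H_{0}^{1}(0,1))$, $w(t)\in C([0,1])$ for every $t$, and $w^{+}(t):=(u(t)-v(t))^{+}\in H_{0}^{1}(0,1)$ with $\partial_{x}w^{+}(t)=\partial_{x}w(t)\,\chi_{\{w(t)>0\}}$. Subtracting the two equations, $w$ satisfies $\partial_{t}w-\partial_{x}^{2}w-(h(t)-g(t))=\omega\,w$ a.e.

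The next step is to test this equation with $w^{+}(t)$. For a.a. $t$ the chain rule for absolutely continuous $L^{2}$‑valued maps gives $\langle\partial_{t}w(t),w^{+}(t)\rangle=\tfrac12\tfrac{d}{dt}\|w^{+}(t)\|_{L^{2}}^{2}$; integrating by parts, using $w(t)\in H^{2}(\Omega)\cap H_{0}^{1}(\Omega)$ for a.a. $t$ and $\partial_{x}w^{+}=\partial_{x}w\,\chi_{\{w>0\}}$, one gets $\int_{0}^{1}(-\partial_{x}^{2}w)\,w^{+}\,dx=\|\partial_{x}w^{+}\|_{L^{2}}^{2}\ge0$; and $\int_{0}^{1}w\,w^{+}\,dx=\|w^{+}\|_{L^{2}}^{2}$. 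Hence, for a.a. $t\in(0,T)$,
\[
\frac12\frac{d}{dt}\|w^{+}(t)\|_{L^{2}}^{2}+\|\partial_{x}w^{+}(t)\|_{L^{2}}^{2}=\int_{0}^{1}\big(h(t,x)-g(t,x)\big)\,w^{+}(t,x)\,dx+\omega\,\|w^{+}(t)\|_{L^{2}}^{2}.
\]
Now I apply Lemma \ref{IneqH0} with $u_{1}=u(t)$, $u_{2}=v(t)$, $z_{1}=h(t)$, $z_{2}=g(t)$: whichever of $u,v$ is non‑degenerate is so uniformly on $[0,T]$ by \eqref{Nondegenerate2}, so the constant $D=\max\{C,1/\alpha_{0}\}$ is independent of $t$, and $\int_{0}^{1}(h(t)-g(t))\,w^{+}(t)\,dx\le2D\|w^{+}(t)\|_{L^{\infty}}^{2}$. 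Combining this with the one‑dimensional inequality $\|f\|_{L^{\infty}}^{2}\le\|f\|_{L^{2}}\|\partial_{x}f\|_{L^{2}}$ for $f\in H_{0}^{1}(0,1)$ and Young's inequality, $2D\|w^{+}\|_{L^{\infty}}^{2}\le\|\partial_{x}w^{+}\|_{L^{2}}^{2}+D^{2}\|w^{+}\|_{L^{2}}^{2}$, so the gradient term is absorbed and
\[
\frac{d}{dt}\|w^{+}(t)\|_{L^{2}}^{2}\le2\,(D^{2}+\omega)\,\|w^{+}(t)\|_{L^{2}}^{2}\quad\text{for a.a. }t\in(0,T).
\]

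Since $u_{0}\le v_{0}$ we have $w^{+}(0)=(u_{0}-v_{0})^{+}=0$, so Gronwall's inequality forces $\|w^{+}(t)\|_{L^{2}}=0$ for all $t\in[0,T]$, i.e. $u(t,x)\le v(t,x)$ for a.a. $x$, hence for all $x\in[0,1]$ by $H_{0}^{1}(0,1)\hookrightarrow C([0,1])$. I expect the only genuinely delicate point to be the rigorous justification of the identity $\langle\partial_{t}w,w^{+}\rangle=\tfrac12\tfrac{d}{dt}\|w^{+}\|_{L^{2}}^{2}$ and of the integration by parts, which rely on the regularity of strong solutions of \eqref{Abstract} recorded above ($w$ absolutely continuous into $L^{2}$, $w(t)\in H^{2}\cap H_{0}^{1}$ a.e., $w\in C([0,T],H_{0}^{1})$) and are classical (see \cite{Barbu}). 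If one prefers to stay in the regime where $\|w^{+}(t)\|_{L^{\infty}}<\alpha_{0}$ throughout, one may instead argue by continuation: the set $\{t\in[0,T]:u(s)\le v(s)\text{ for all }s\le t\}$ contains $0$, is closed by continuity of $w^{+}$ in $C([0,1])$, and is relatively open because, running the above estimate from any time at which $w^{+}$ vanishes (where $\|w^{+}\|_{L^{\infty}}$ remains below $\alpha_{0}$ on a short interval by continuity), one obtains $w^{+}\equiv0$ on that interval; hence this set equals $[0,T]$.
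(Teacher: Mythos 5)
Your proof is correct, and it departs from the paper's exactly at the point where the paper has to work hardest. Both arguments begin the same way: test the difference of the two equations \eqref{exist2} with $(u-v)^{+}$ and control the Heaviside term by Lemma \ref{IneqH0}, using \eqref{Nondegenerate2} to make the constant $D$ uniform in $t$. The paper then bounds $2D\|(u-v)^{+}\|_{L^{\infty}}^{2}$ against the dissipation through the embedding constant $L_{\infty}$ of $H_{0}^{1}\hookrightarrow L^{\infty}$, arriving at $\tfrac12\tfrac{d}{dt}\|(u-v)^{+}\|_{L^{2}}^{2}\leq(K-\beta^{2})\|(u-v)^{+}\|_{L^{\infty}}^{2}$ with $K=2D$; since $K>\beta^{2}$ may occur, it resorts to the rescaling $y=\gamma x$, exploiting that the nondegeneracy constant grows only linearly in $\gamma$ while the diffusion grows like $\gamma^{2}$, so that for large $\gamma$ the right-hand side becomes nonpositive. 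You avoid the rescaling entirely by interpolating $\|w^{+}\|_{L^{\infty}}^{2}\leq\|w^{+}\|_{L^{2}}\|\partial_{x}w^{+}\|_{L^{2}}$ (legitimate since $w^{+}(t)\in H_{0}^{1}(0,1)$) and absorbing the gradient term by Young's inequality, which yields the genuine Gronwall inequality $\tfrac{d}{dt}\|w^{+}\|_{L^{2}}^{2}\leq2(D^{2}+\omega)\|w^{+}\|_{L^{2}}^{2}$ with no restriction on the size of $D$. This is shorter and robust to the constant in the interpolation inequality; the price is that you obtain only an exponential bound rather than the monotone decay the paper's normalized form gives, but since $w^{+}(0)=0$ nothing is lost for the comparison statement. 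The ``continuation'' variant you sketch at the end is superfluous — the Gronwall argument already closes the proof — and the technical points you flag (chain rule for the $L^{2}$-valued derivative, integration by parts) are handled no more explicitly in the paper than in your write-up.
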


\begin{proof}
	For instance, let $u$ be non-degenerate. Multiplying \eqref{exist2} by
	$(u(t)-v(t))^{+}$ we have%
	\begin{align*}
		&  \frac{1}{2}\frac{d}{dt}\left\Vert (u-v)^{+}\right\Vert _{L^{2}}%
		^{2}+\left\Vert (u-v)^{+}\right\Vert _{H_{0}^{1}}^{2}\\
		&  =\int_{0}^{1}(f_{u}(t,x)-f_{v}(t,x))(u(t,x)-v(t,x))^{+}dx+\omega\left\Vert
		(u-v)^{+}\right\Vert _{L^{2}}^{2},
	\end{align*}
	where $f_{u},\ f_{v}\in L^{\infty}((0,T)\times(0,1))$ and $f_{u}(t,x)\in
	H_{0}(u(t,x)),\ f_{v}(t,x)\in H_{0}(v(t,x))$ for a.a. $\left(  t,x\right)  .$
	Let $L_{\infty}>0$ be such that $\left\Vert z\right\Vert _{L^{\infty}}\leq
	L_{\infty}\left\Vert z\right\Vert _{H_{0}^{1}}$ for $z\in H_{0}^{1}(\Omega)$.
	Hence, Lemma \ref{IneqH0} and $\omega<\pi^{2}$ imply that
	\[
	\frac{1}{2}\frac{d}{dt}\left\Vert (u-v)^{+}\right\Vert _{L^{2}}^{2}%
	\leq(K-\beta^{2})\left\Vert u-v\right\Vert _{L^{\infty}}^{2},
	\]
	where $\beta=(\frac{1}{L_{\infty}})\left(  1-\frac{\omega}{\pi^{2}}\right)
	^{\frac{1}{2}}$, $K=2D>0$ and $D$ is the constant in
	\eqref{Nondegenerate2} for the solution $u$.
	
	If $K\leq\beta^{2}$, then the result follows immediately. Thus, assume that
	$K>\beta^{2}.$
	
	We introduce the rescaling $y=\gamma x$, where $\gamma>0$. We put $u_{\gamma
	}(t,y):=u(t,y/\gamma)$, so \\ $\left(  u_{\gamma}\right)  _{yy}(t,y):=\left(
	u\right)  _{xx}(t,y/\gamma)/\gamma^{2}$. Since%
	\[
	u_{t}(t,\frac{y}{\gamma})-u_{xx}(t,\frac{y}{\gamma})=f_{u}(t,\frac{y}{\gamma
	})+\omega u(t,\frac{y}{\gamma}),\text{ for a.a. }t\in\left(  0,T\right)
	,\text{ }0<y<\gamma,
	\]
	we have%
	\[
	\left(  u_{\gamma}\right)  _{t}(t,y)-\gamma^{2}\left(  u_{\gamma}\right)
	_{yy}(t,y)=f_{u_{\gamma}}(t,y)+\omega u_{\gamma}(t,y),\ \text{for a.a. }%
	t\in\left(  0,T\right)  ,0<y<\gamma,
	\]
	where $f_{u_{\gamma}}(t,y):=f_{u}(t,y/\gamma)$, and the same is true for
	$v_{\gamma}(t,y)=v(t,y/\gamma)$. Thus, $u_{\gamma},v_{\gamma}:[0,T]\rightarrow
	H_{0}^{1}(0,\gamma)$ are strong solutions of the problem%
	\begin{equation}
		\left\{
		\begin{array}
			[c]{l}%
			\dfrac{\partial u}{\partial t}-\gamma^{2}\dfrac{\partial^{2}u}{\partial x^{2}%
			}\in H_{0}(u)+\omega u,\ \text{on\ }(0,\infty)\times(0,\gamma),\\
			u(t,0)=u(t,\gamma)=0,\\
			u(x,0)=u_{0}(x).
		\end{array}
		\right.  \label{EqGamma}%
	\end{equation}

	In what follows, we will use the notation%
	\[
	\left\Vert v\right\Vert _{H^{1}(I_{\gamma})}=\sqrt{\left\Vert v\right\Vert
		_{L^{2}(I_{\gamma})}^{2}+\left\Vert \frac{dv}{dx}\right\Vert _{L^{2}%
			(I_{\gamma})}^{2}}.
	\]

	If $C_{\gamma}$ is the constant in \eqref{Nondegenerate2} for the solution
	$u_{\gamma}$, we need to analyze how it depends on $\gamma$. For the constant
	of nondegeneracy $C$ of $u$ we have%
	\[
	\mu\{x\in\left(  0,1\right)  :\left\vert u(x)\right\vert \leq\alpha\}\leq
	C\alpha,
	\]
	so%
	\[
	\mu\{y\in\left(  0,\gamma\right)  :\left\vert u_{\gamma}(y)\right\vert
	\leq\alpha\}=\int_{\left\vert u_{\gamma}(y)\right\vert \leq\alpha}%
	1\ dy=\int_{\left\vert u(y/\gamma)\right\vert \leq\alpha}1\ dy
	\]%
	\begin{equation}
		=\int_{\left\vert u(x)\right\vert \leq\alpha}\gamma\ dx=\gamma\mu\{x\in\left(
		0,1\right)  :\left\vert u(x)\right\vert \leq\alpha\}\leq\gamma\alpha
		C=C_{\gamma}\alpha, \label{DegeneracyGamma}%
	\end{equation}
	where $C_{\gamma}=\gamma C.$
	
	Let $I_{\gamma}=[0,\gamma]$. We will prove the existence of $\overline
	{L}_{\infty}$ (independent of $\gamma\geq1$) such that%
	\begin{equation}
		\left\Vert w\right\Vert _{L^{\infty}(I_{\gamma})}\leq\overline{L}_{\infty
		}\left\Vert w\right\Vert _{H^{1}(I_{\gamma})}\text{, for any }w\in
		H^{1}(I_{\gamma}). \label{InclusionGamma}%
	\end{equation}
	By [Brezis, Theorem VIII.7] there is a positive constant $\overline{C}$ such
	that
	\[
	\left\Vert v\right\Vert _{H^{1}(\mathbb{R})}\geq\overline{C}\left\Vert
	v\right\Vert _{L^{\infty}(\mathbb{R})}\text{ for all }v\in H^{1}(\mathbb{R}).
	\]
	By [Brezis, Theorem VIII.5] there exists a prolongation operator $P_{\gamma
	}:H^{1}(I_{\gamma})\rightarrow H^{1}(\mathbb{R})$ which satisfies%
	\[
	\left\Vert P_{\gamma}w\right\Vert _{H^{1}(\mathbb{R})}\leq4\left(  1+\frac
	{1}{\gamma}\right)  \left\Vert w\right\Vert _{H^{1}(I_{\gamma})}\text{ for all
	}w\in H^{1}(I_{\gamma}).
	\]
	Also, by the construction it follows that $\left\Vert P_{\gamma}w\right\Vert
	_{L^{\infty}(\mathbb{R})}=\left\Vert w\right\Vert _{L^{\infty}(\mathbb{R})}$.
	Hence, for $\gamma\geq1$, we have%
	\begin{align*}
		\left\Vert u\right\Vert _{H^{1}(I_{\gamma})}  &  \geq\frac{\gamma}%
		{4(1+\gamma)}\left\Vert P_{\gamma}w\right\Vert _{H^{1}(\mathbb{R})}\geq
		\frac{1}{8}\left\Vert P_{\gamma}w\right\Vert _{H^{1}(\mathbb{R})}\\
		&  \geq\frac{\overline{C}}{8}\left\Vert P_{\gamma}w\right\Vert _{L^{\infty
			}(\mathbb{R})}=\frac{\overline{C}}{8}\left\Vert w\right\Vert _{L^{\infty
			}(I_{\gamma})},
	\end{align*}
	so \eqref{InclusionGamma} is true with $\overline{L}_{\infty}=\frac
	{8}{\overline{C}}.$
	
	Multiplying by $(u_{\gamma}(t)-v_{\gamma}(t))^{+}$ the equality%
	\[
	\frac{d}{dt}\left(  u_{\gamma}-v_{\gamma}\right)  -\gamma^{2}\frac
	{\partial^{2}\left(  u_{\gamma}-v_{\gamma}\right)  }{\partial x^{2}%
	}=f_{u_{\gamma}}(t)-f_{v_{\gamma}}(t)+\omega\left(  u_{\gamma}-v_{\gamma
	}\right)  ,
	\]
	where $f_{u_{\gamma}},\ f_{v_{\gamma}}\in L^{\infty}((0,T)\times(0,1))$ are
	such that $f_{u_{\gamma}}(t,x)\in H_{0}(u_{\gamma}(t,x))$,\\ $  f_{v_{\gamma}%
	}(t,x)\in H_{0}(v_{\gamma}(t,x))$ for a.a. $\left(  t,x\right)  $, we obtain%
	\begin{align*}
		&  \frac{1}{2}\frac{d}{dt}\left\Vert (u_{\gamma}-v_{\gamma})^{+}\right\Vert
		_{L^{2}(I_{\gamma})}^{2}+\gamma^{2}\left\Vert (u_{\gamma}-v_{\gamma}%
		)^{+}\right\Vert _{H_{0}^{1}(I_{\gamma})}^{2}\\
		&  \leq\int_{0}^{\gamma}(f_{u_{\gamma}}(t,x)-f_{v_{\gamma}}(t,x))(u_{\gamma
		}(t,x)-v_{\gamma}(t,x))^{+}dx+\frac{\omega\gamma^{2}}{\pi^{2}}\left\Vert
		(u_{\gamma}-v_{\gamma})^{+}\right\Vert _{H_{0}^{1}(I_{\gamma})}^{2}.
	\end{align*}
	Hence, by Remark \ref{IneqH0Remark} and \eqref{DegeneracyGamma} we have%
	\begin{align*}
		&  \frac{1}{2}\frac{d}{dt}\left\Vert (u_{\gamma}-v_{\gamma})^{+}\right\Vert
		_{L^{2}(I_{\gamma})}^{2}+\gamma^{2}\left(  1-\frac{\omega}{\pi^{2}}\right)
		\left\Vert (u_{\gamma}-v_{\gamma})^{+}\right\Vert _{H^{1}(I_{\gamma})}^{2}\\
		& \leq2D_{\gamma}\left\Vert (u_{\gamma}-v_{\gamma}%
		)^{+}\right\Vert _{L^{\infty}(I_{\gamma})}^{2}+\gamma^{2}\left(
		1-\frac{\omega}{\pi^{2}}\right)  \left\Vert (u_{\gamma}-v_{\gamma}%
		)^{+}\right\Vert _{L^{2}(I_{\gamma})}^{2},
	\end{align*}
	where $D_{\gamma}=\max\{C\gamma,1/\alpha_{0}\}$, so%
	\begin{align*}
		&  \frac{1}{2}\frac{d}{dt}\left\Vert (u_{\gamma}-v_{\gamma})^{+}\right\Vert
		_{L^{2}(I_{\gamma})}^{2}\\
		&  \leq\gamma^{2}\left(  1-\frac{\omega}{\pi^{2}}\right)  \left\Vert
		(u_{\gamma}-v_{\gamma})^{+}\right\Vert _{L^{2}(I_{\gamma})}^{2}+\left(
		2C\gamma-\frac{\gamma^{2}\left(  \pi^{2}-\omega\right)  }{\overline{L}%
			_{\infty}^{2}\pi^{2}}\right)  \left\Vert (u_{\gamma}-v_{\gamma})^{+}%
		\right\Vert _{L^{\infty}(I_{\gamma})}^{2}\\
		&  \leq\gamma^{2}\left(  1-\frac{\omega}{\pi^{2}}\right)  \left\Vert
		(u_{\gamma}-v_{\gamma})^{+}\right\Vert _{L^{2}(I_{\gamma})}^{2},
	\end{align*}
	for $\gamma$ great enough. Thus,%
	\[
	\left\Vert (u_{\gamma}-v_{\gamma})^{+}(t)\right\Vert _{L^{2}(I_{\gamma})}%
	^{2}\leq e^{\delta t}\left\Vert (u_{\gamma}-v_{\gamma})^{+}(0)\right\Vert
	_{L^{2}(I_{\gamma})}^{2}=0,
	\]
	for $\delta=2\gamma^{2}\left(  1-\frac{\omega}{\pi^{2}}\right)  $. Hence,
	$u_{\gamma}(t)\leq v_{\gamma}(t)$ and then $u(t)\leq v(t)$ for all
	$t\in\lbrack0,T].$
\end{proof}

\bigskip

\begin{corollary}
	\label{Uniqueness}If $u,v:[0,T]\rightarrow H_{0}^{1}(\Omega)$ are two strong
	solutions such that $u\left(  0\right)  =v\left(  0\right)  =u_{0}$ and either
	$u$ or $v$ is degenerate on $[0,T]$, then $u\left(  t\right)  =v\left(
	t\right)  $ for all $t\in\lbrack0,T]$.
\end{corollary}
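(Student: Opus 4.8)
The plan is to obtain this corollary as an immediate consequence of the comparison theorem established just above. The key observation is that the equality $u(0)=v(0)=u_{0}$ furnishes the required order relation in \emph{both} directions: viewing these as elements of $H_{0}^{1}(\Omega)$ with the pointwise order, one has simultaneously $u(0)\leq v(0)$ and $v(0)\leq u(0)$.

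First I would apply the preceding theorem to the pair of initial conditions $(u_{0},u_{0})$, which trivially satisfies $u_{0}\leq u_{0}$, together with the two strong solutions $u$ and $v$. The standing hypothesis that either $u$ or $v$ is non-degenerate on $[0,T]$ in the sense of \eqref{Nondegenerate2} is exactly what that theorem requires, so it yields $u(t)\leq v(t)$ for every $t\in[0,T]$. Then I would invoke the same theorem a second time with the roles of $u$ and $v$ exchanged; the non-degeneracy clause is symmetric in the two solutions, so it remains in force, and the relation $v(0)\leq u(0)$ now gives $v(t)\leq u(t)$ for every $t\in[0,T]$. Combining the two inequalities produces $u(t,x)=v(t,x)$ for all $x\in[0,1]$ and all $t\in[0,T]$, that is $u(t)=v(t)$ in $H_{0}^{1}(\Omega)$, which is the assertion.

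I do not anticipate any genuine obstacle here, since the corollary is a purely formal consequence of the comparison principle; the only thing to verify is that the theorem is applicable in each of the two directions, and this is immediate because the equality of the initial data supplies the ordering both ways and because the non-degeneracy assumption ``either $u$ or $v$'' does not single out one of the solutions. As announced at the beginning of this subsection, the relevant application is to the stationary solutions: for each $k\geq1$ the constant function $t\mapsto v_{k}^{\pm}$ is a strong solution and $v_{k}^{\pm}$ is non-degenerate, so taking $u_{0}=v_{k}^{\pm}$ in the corollary we conclude that the solution issuing from the fixed point $v_{k}^{\pm}$ is unique.
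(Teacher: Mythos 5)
Your argument is correct and is exactly the intended derivation: the paper states the corollary without proof as an immediate consequence of the preceding comparison theorem, and the double application of that theorem (once with $u_0\leq u_0$ giving $u(t)\leq v(t)$, once with the roles of $u$ and $v$ exchanged giving the reverse inequality) is the standard way to extract uniqueness from a comparison principle. You also correctly read the hypothesis ``degenerate'' as the evident typo for ``non-degenerate,'' consistent with the theorem's proof, which begins ``let $u$ be non-degenerate.''
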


\begin{lemma}
	The fixed points $v_{k}^{\pm}$ are nondegenerate.
\end{lemma}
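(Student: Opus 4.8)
The plan is to exploit the nodal structure of the stationary solutions. From \cite{ARV06} we know that $v:=v_{k}^{+}$ (the case $v_{k}^{-}$ is identical) has exactly $k-1$ zeros in $\Omega=(0,1)$; call them $x_{1},\dots,x_{k-1}$ and set $x_{0}=0$, $x_{k}=1$, so that $0=x_{0}<x_{1}<\dots<x_{k-1}<x_{k}=1$. Being a fixed point, $v$ satisfies $-v''=h+\omega v$ on $\Omega$ with $v\in H^{2}(\Omega)\cap H_{0}^{1}(\Omega)\subset C^{1}([0,1])$ and $h(x)\in H_{0}(v(x))$ a.e. On each open interval $(x_{j-1},x_{j})$ the continuous function $v$ has a fixed sign, hence the selection $h$ equals the constant $\pm 1$ there, the right-hand side $\pm 1+\omega v$ is continuous, and so $v\in C^{2}(x_{j-1},x_{j})$. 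Moreover, using $\omega\ge 0$: if $v>0$ on $(x_{j-1},x_{j})$ then $v''=-(1+\omega v)\le -1<0$, so $v$ is strictly concave there; if $v<0$ then $v''=1-\omega v\ge 1>0$, so $v$ is strictly convex there. In particular $v'$ is strictly monotone on each closed nodal interval.

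The key step is to show $v'(x_{j})\ne 0$ for every $j=0,\dots,k$, i.e. that $v$ crosses zero transversally at each node, including the Dirichlet endpoints. For $j=0,\dots,k-1$, look at the nodal interval $(x_{j},x_{j+1})$ immediately to the right of $x_{j}$. If $v>0$ there, then $v'$ is strictly decreasing on $[x_{j},x_{j+1}]$; were $v'(x_{j})\le 0$, we would get $v'<0$ on $(x_{j},x_{j+1})$, so $v$ strictly decreasing and hence $v<v(x_{j})=0$ there, a contradiction. If instead $v<0$ on $(x_{j},x_{j+1})$, then $v'$ is strictly increasing and $v'(x_{j})\ge 0$ would force $v>v(x_{j})=0$ there, again a contradiction. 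Thus $v'(x_{j})\ne 0$. The endpoint $x_{k}=1$ is handled in the same way using the nodal interval $(x_{k-1},x_{k})$ to its left.

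It remains to deduce the estimate \eqref{Nondegenerate}. Since $v'$ is continuous and $v'(x_{j})\ne 0$, there are $\rho>0$ small (so that the intervals $[x_{j}-\rho,x_{j}+\rho]$ do not cover $[0,1]$) and $c>0$ with $|v'|\ge c$ on $[x_{j}-\rho,x_{j}+\rho]\cap[0,1]$ for all $j$. Since $v(x_{j})=0$, this gives $|v(x)|\ge c\,|x-x_{j}|$ on that interval, whence $\mu\{x\in[x_{j}-\rho,x_{j}+\rho]:|v(x)|\le\alpha\}\le 2\alpha/c$ as long as $\alpha<c\rho$. On the compact set $[0,1]\setminus\bigcup_{j}(x_{j}-\rho,x_{j}+\rho)$ the continuous function $|v|$ attains a positive minimum $m>0$, so for $\alpha<\alpha_{0}:=\min\{m,c\rho\}$ the set $\{x\in(0,1):|v(x)|\le\alpha\}$ is contained in $\bigcup_{j}[x_{j}-\rho,x_{j}+\rho]$, and therefore $\mu\{x\in(0,1):|v(x)|\le\alpha\}\le (k+1)\,2\alpha/c=:C\alpha$, which is precisely \eqref{Nondegenerate}.

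The only delicate point is the transversality $v'(x_{j})\ne 0$ at the nodes, and especially at the boundary points $0$ and $1$; this is exactly where the structure of the problem enters, namely the strict concavity/convexity of $v$ on each nodal interval (a consequence of $\omega\ge 0$ and of the Heaviside selection being constantly $\pm 1$ off the zero set) combined with the Dirichlet condition $v(0)=v(1)=0$. Everything else is routine, provided one correctly invokes the finiteness of the zero set of $v_{k}^{\pm}$ from \cite{ARV06}, which guarantees that the nodes are isolated and that $v$ has constant sign on each nodal interval.
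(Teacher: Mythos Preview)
Your proof is correct and follows the same overall strategy as the paper: identify the finitely many zeros of $v_{k}^{\pm}$, show the derivative is nonzero there, and deduce the measure estimate from the resulting linear lower bound $|v(x)|\ge c|x-x_{j}|$ near each zero.

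The difference lies in how transversality at the zeros is obtained. The paper simply asserts $(v_{k}^{+})'(0)=\gamma_{0}>0$ and then exploits the explicit symmetry of the fixed points (namely $v_{1}^{+}(1-x)=v_{1}^{+}(x)$ and, for $k\ge 2$, the fact that $v_{k}^{\pm}$ is built from $k$ rescaled copies of $v_{1}^{\pm}$) to propagate the estimate to all zeros, obtaining the constant $4k/\gamma_{0}$. You instead derive transversality directly from the ODE: strict concavity/convexity on each nodal interval forces $v'(x_{j})\neq 0$. This makes your argument more self-contained (you do not need to import the sign of $(v_{k}^{+})'(0)$ or the symmetry from \cite{ARV06}) and it would apply verbatim to any $C^{1}$ stationary solution with finitely many zeros, at the price of a slightly less explicit constant $2(k+1)/c$. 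The paper's version is shorter precisely because it leans on the structural facts about $v_{k}^{\pm}$ already established in \cite{ARV06}.
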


\begin{proof}
	Let us consider first the point $v_{1}^{+}$ and denote $\left(  v_{1}%
	^{+}\right)  ^{\prime}\left(  0\right)  =\gamma_{0}>0$. We choose
	$0<x_{0}<\frac{1}{2}$ such that $\left(  v_{1}^{+}\right)  ^{\prime}\left(
	x\right)  \geq\frac{\gamma_{0}}{2}$ for any $x\in\lbrack0,x_{0}]$. Then for
	$0<\alpha_{0}\leq v_{1}^{+}(x_{0})$ we have%
	\[
	v_{1}^{+}\left(  x\right)  \geq\alpha_{0}\text{ }\forall x\in\lbrack
	x_{0},1-x_{0}],
	\]%
	\[
	v_{1}^{+}(1-x)=v_{1}^{+}\left(  x\right)  =\left(  v_{1}^{+}\right)  ^{\prime
	}(\overline{x})x\geq\frac{\gamma_{0}}{2}x\ \forall x\in\lbrack0,x_{0}].
	\]
	Hence, for $0<\alpha<\alpha_{0}$ and $x\in\lbrack0,x_{0}]$ such that
	$v_{1}^{+}\left(  x\right)  \leq\alpha$ we obtain that%
	\[
	\frac{\gamma_{0}}{2}x\leq\alpha,
	\]
	so by symmetry,%
	\[
	\mu\{x\in\left(  0,1\right)  :v_{1}^{+}\left(  x\right)  \leq\alpha\}\leq
	\frac{4}{\gamma_{0}}\alpha.
	\]

	Therefore, $v_{1}^{+}$ is nondegenerate and then so is $v_{1}^{-}.$
	
	By symmetry, we easily deduce that
	\[
	\mu\{x\in\left(  0,1\right)  :\left\vert v_{k}^{+}\left(  x\right)
	\right\vert \leq\alpha\}\leq\frac{4k}{\gamma_{0}}\alpha,
	\]
	where $\left(  v_{k}^{+}\right)  ^{\prime}\left(  0\right)  =\gamma_{0}$ and
	$x_{0},\ \alpha_{0}$ are such that $0<x_{0}<\frac{1}{2k}$, $\left(  v_{k}%
	^{+}\right)  ^{\prime}\left(  x\right)  \geq\frac{\gamma_{0}}{2},$ for any
	$x\in\lbrack0,x_{0}],$ and $0<\alpha_{0}\leq v_{k}^{+}(x_{0}).$
	
	Thus, $v_{k}^{\pm}$ are nondegenerate.
\end{proof}

\bigskip

From the previous corollary and the fact that the fixed points $v_{k}^{\pm}$
are nondegenerate we obtain the following result.

\begin{corollary}
	For any fixed point $v_{k}^{\pm}$, $k\geq1$, the solution $u\left(
	\text{\textperiodcentered}\right)  $ with initial condition $u\left(
	0\right)  =v_{k}^{\pm}$ is unique on $[0,+\infty)$.
\end{corollary}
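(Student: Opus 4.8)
The plan is to reduce the assertion to Corollary \ref{Uniqueness} by comparing an arbitrary strong solution starting at $v_{k}^{\pm}$ with the constant solution. So first I would check that the constant function $u(t)\equiv v_{k}^{\pm}$ is indeed a strong solution of \eqref{Incl} on every interval $[0,T]$. Since $v_{k}^{\pm}$ is a stationary point of \eqref{Abstract} and $\partial\psi^{1}$ is single-valued on $H^{2}(\Omega)\cap H_{0}^{1}(\Omega)$, we have $g_{0}:=-\partial^{2}v_{k}^{\pm}/\partial x^{2}\in\partial\psi^{2}(v_{k}^{\pm})$; taking $g(t)\equiv g_{0}\in L^{2}(0,T;L^{2}(\Omega))$ one sees that $u(t)\equiv v_{k}^{\pm}$ satisfies $u(0)=v_{k}^{\pm}$, $u(t)\in D(\partial\psi^{1})$ for all $t$, and equality \eqref{exist1}, so it is a strong solution, and since $v_{k}^{\pm}\in H_{0}^{1}(\Omega)$ it lies in $C([0,T],H_{0}^{1}(\Omega))$.

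Next I would record that this constant solution is non-degenerate on $[0,T]$ with constants independent of $t$. By the preceding lemma the profile $v_{k}^{\pm}$ satisfies \eqref{Nondegenerate} with some $C,\alpha_{0}>0$; since $u(t,x)=v_{k}^{\pm}(x)$ does not vary with $t$, the estimate \eqref{Nondegenerate2} holds with the very same $C,\alpha_{0}$ for every $t\in[0,T]$. Hence $u\equiv v_{k}^{\pm}$ is a non-degenerate strong solution on each $[0,T]$.

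Finally, let $v$ be any strong solution of \eqref{Incl} with $v(0)=v_{k}^{\pm}$. By the cited results of \cite{Valero01} every such solution extends globally, and since $v_{k}^{\pm}\in H_{0}^{1}(\Omega)$ we have $v\in C([0,+\infty),H_{0}^{1}(\Omega))$, so $v$ restricts to a strong solution $[0,T]\to H_{0}^{1}(\Omega)$ for each $T>0$. Applying Corollary \ref{Uniqueness} on $[0,T]$ with the non-degenerate solution $u\equiv v_{k}^{\pm}$ and $u(0)=v(0)=v_{k}^{\pm}$ yields $v(t)=v_{k}^{\pm}$ for all $t\in[0,T]$; letting $T\to+\infty$ gives $v\equiv v_{k}^{\pm}$ on $[0,+\infty)$, which is exactly the claimed uniqueness. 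I do not anticipate any genuine obstacle here: the substantive work is already carried out in Corollary \ref{Uniqueness} and in the non-degeneracy lemma for $v_{k}^{\pm}$, and the only points needing attention are the (immediate) verifications that the constant function meets the definition of a strong solution and that its non-degeneracy constant can be chosen independent of $t$.
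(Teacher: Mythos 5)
Your proposal is correct and follows exactly the route the paper intends: the paper derives this corollary directly from Corollary \ref{Uniqueness} together with the lemma that $v_{k}^{\pm}$ is nondegenerate, taking the constant solution $u(t)\equiv v_{k}^{\pm}$ as the nondegenerate comparison solution (note that the word ``degenerate'' in the hypothesis of Corollary \ref{Uniqueness} is a typo for ``non-degenerate''). Your write-up merely makes explicit the routine verifications (that the constant function is a strong solution in $C([0,T],H_{0}^{1}(\Omega))$ and that its nondegeneracy constants are $t$-independent) which the paper leaves implicit.
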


Finally, We will define a suitable neighborhood of the point $v_{k}^{\pm}$
where all the solutions are uniquely defined. We consider the space
$X=V^{2r}=D\left(  A^{r}\right)  $ with $\frac{3}{4}<r<1$. We know that $X$ is
continuously embedded into the space $C^{1}([0,1]).$ We denote by $O_{\delta
}(v_{0})=\{v\in X:\left\Vert v-v_{0}\right\Vert _{X}<\delta\}$ a $\delta
$-neighborhood of the point $v_{0}\in X.$

\begin{lemma}
	\label{NeighborhoodDegenerate}For any $v_{k}^{\pm}$ there exist $\delta
	,C,\alpha_{0}>0$ such that
	\[
	\mu\{x\in\left(  0,1\right)  :\left\vert v\left(  x\right)  \right\vert
	\leq\alpha\}\leq C\alpha\text{ }\forall v\in O_{\delta}(v_{k}^{\pm}%
	),\ \alpha\in(0,\alpha_{0}).
	\]
	
\end{lemma}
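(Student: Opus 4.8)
The plan is to exploit that $v_{k}^{+}$ meets the axis transversally at each of its finitely many zeros, and then to transfer this to nearby functions via the continuous embedding $X=V^{2r}\hookrightarrow C^{1}([0,1])$, which turns $X$-closeness into $C^{1}$-closeness. First I would record the structure of $v_{k}^{+}$: as in the proof of the previous lemma (and as follows from the explicit description of the stationary points in \cite{ARV06}), $v_{k}^{+}\in C^{1}([0,1])$ has finitely many zeros $0=z_{0}<z_{1}<\cdots<z_{k}=1$ in $[0,1]$, and each of them is simple, i.e.\ $(v_{k}^{+})'(z_{j})\neq 0$. The latter holds because on each nodal interval $v_{k}^{+}$ solves a linear constant-coefficient equation ($-u''=\pm 1+\omega u$), so a double zero at $z_{j}$ would force $v_{k}^{+}$ to coincide near $z_{j}$ with the corresponding particular solution, whose sign on that interval is opposite to that of $v_{k}^{+}$ — a contradiction. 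I would then set $\gamma_{0}:=\min_{0\le j\le k}|(v_{k}^{+})'(z_{j})|>0$ and, using continuity of $(v_{k}^{+})'$, fix $x_{0}>0$ so small that $|(v_{k}^{+})'(x)|\ge\gamma_{0}/2$ whenever $dist(x,\{z_{0},\dots,z_{k}\})\le x_{0}$ and the intervals $U_{j}:=(z_{j}-x_{0},z_{j}+x_{0})\cap[0,1]$, $0\le j\le k$, are pairwise disjoint. On the complement $K_{0}:=[0,1]\setminus\bigcup_{j}U_{j}$ the continuous function $|v_{k}^{+}|$ attains a strictly positive minimum $m_{0}>0$.

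Next I would use the embedding constant: there is $\kappa>0$ with $\|w\|_{C^{1}([0,1])}\le\kappa\|w\|_{X}$ for all $w\in X$. Choosing $\delta>0$ with $\kappa\delta<\min\{\gamma_{0}/4,\,m_{0}/2\}$, any $v\in O_{\delta}(v_{k}^{+})$ satisfies $\|v-v_{k}^{+}\|_{C^{1}}\le\kappa\delta$, hence $|v'(x)|\ge\gamma_{0}/4>0$ on each $U_{j}$ and $|v(x)|\ge m_{0}/2$ on $K_{0}$. Finally I would fix $\alpha\in(0,\alpha_{0})$ with $\alpha_{0}:=m_{0}/2$. On $K_{0}$ the set $\{x:|v(x)|\le\alpha\}$ is empty. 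On each $U_{j}$, $v'$ has constant sign (it is continuous and bounded away from $0$), so $v$ is strictly monotone on the interval $U_{j}$; thus $\{x\in U_{j}:|v(x)|\le\alpha\}$ is an interval whose points $x,y$ satisfy $(\gamma_{0}/4)|x-y|\le|v(x)-v(y)|\le 2\alpha$, i.e.\ it has length at most $8\alpha/\gamma_{0}$. Summing over the $k+1$ neighborhoods gives $\mu\{x\in(0,1):|v(x)|\le\alpha\}\le 8(k+1)\alpha/\gamma_{0}=:C\alpha$, which is the desired bound with these $\delta$, $C$ and $\alpha_{0}$. For $v_{k}^{-}$ I would simply use $v_{k}^{-}=-v_{k}^{+}$, so that $O_{\delta}(v_{k}^{-})=-O_{\delta}(v_{k}^{+})$ and the same constants work.

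The main obstacle is the first step — the simplicity of the zeros of $v_{k}^{+}$; everything afterwards is a soft perturbation argument resting on the $C^{1}$-embedding and the monotonicity of $v$ near its zeros. If one prefers not to invoke the explicit ODE structure, an alternative is to argue that one cannot have $v_{k}^{+}(z_{j})=(v_{k}^{+})'(z_{j})=0$ by backward uniqueness for the linear problem $-u''=h+\omega u$ on an adjacent nodal domain (where $h\equiv\pm 1$ is constant there), which again pins $v_{k}^{+}$ down to a particular solution of the wrong sign near $z_{j}$.
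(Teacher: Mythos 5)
Your proposal is correct and follows essentially the same route as the paper: use the embedding $X\hookrightarrow C^{1}([0,1])$ to transfer the transversality of $v_{k}^{\pm}$ at its zeros to all $v\in O_{\delta}(v_{k}^{\pm})$, bound $|v'|$ from below near the zeros and $|v|$ from below away from them, and estimate the measure of $\{|v|\le\alpha\}$ by monotonicity on each small interval. The only difference is expository — the paper works out the representative case $v_{2}^{-}$ with its specific zeros, while you treat general $k$ uniformly and additionally supply the ODE argument for simplicity of the zeros, which the paper takes for granted from the known structure of the stationary points.
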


\begin{remark}
	This result means that the functions are uniformly nondegenerate in some
	neighborhood $O_{\delta}(v_{k}^{\pm}).$
\end{remark}

\begin{proof}
	We will analyze the function $v_{2}^{-}$. The proof is rather similar for the
	other points.
	
	Denote $\gamma_{0}=\left(  v_{2}^{-}\right)  ^{\prime}\left(  \frac{1}%
		{2}\right)  >0$. Since $X\subset C^{1}([0,1])$, we can choose $\delta>0,$
	$0<x_{0}<\frac{1}{2}$ such that any $v\in O_{\delta_{1}}(v_{2}^{-})$ satisfies:
	
	\begin{itemize}
		\item $v$ has only one zero $x_{v}$ in $\left(  0,1\right)  $ and $x_{v}%
		\in\left(  \frac{1}{2}-x_{0},\frac{1}{2}+x_{0}\right)  ;$
		
		\item $v^{\prime}\left(  x\right)  \geq\frac{\gamma_{0}}{2}$ for all
		$x\in\lbrack\frac{1}{2}-x_{0},\frac{1}{2}+x_{0}];$
		
		\item $v^{\prime}(x)\leq-\frac{\gamma_{0}}{2}$ for all $x\in\lbrack
		0,x_{0}]\cup\lbrack1-x_{0},1];$
		
		\item $\left\vert v\left(  x\right)  \right\vert \geq\alpha_{0}$ for all
		$x\in\lbrack x_{0},\frac{1}{2}-x_{0}]\cup\lbrack\frac{1}{2}+x_{0},1-x_{0}], $
	\end{itemize}
	
	where $\alpha_{0}<v_{2}^{-}(\frac{1}{2}+x_{0})$. Hence,
	\begin{align*}
		v(x)  &  =v^{\prime}(x_{1}^{v})(x-x_{v})\geq\frac{\gamma_{0}}{2}%
		(x-x_{v}),\text{ for }x\in\lbrack x_{v},\frac{1}{2}+x_{0}],\\
		v(x)  &  =v^{\prime}(x_{2}^{v})(x-x_{v})\leq\frac{\gamma_{0}}{2}%
		(x-x_{v}),\text{ for }x\in\lbrack\frac{1}{2}-x_{0},x_{v}],\\
		v(x)  &  =v^{\prime}(x_{3}^{v})x\leq-\frac{\gamma_{0}}{2}x\text{, for }%
		x\in\lbrack0,x_{0}],\\
		v(x)  &  =v^{\prime}(x_{4}^{v})(x-1)\geq\frac{\gamma_{0}}{2}(1-x)\text{, for
		}x\in\lbrack1-x_{0},1].
	\end{align*}
	Therefore,%
	\[
	\mu\{x\in\left(  0,1\right)  :\left\vert v\left(  x\right)  \right\vert
	\leq\alpha\}\leq\frac{8}{\gamma_{0}}\alpha.
	\]
	
\end{proof}

\bigskip

Let $u_{0}\in O_{\delta}(v_{k}^{\pm})$ and $u\left(  \text{\textperiodcentered
}\right)  \in\mathcal{D}(u_{0})$. Let $T_{\max}$ be the maximal time such that
$u\left(  t\right)  \in O_{\delta}(v_{k}^{\pm})$ for all $t\in\lbrack
0,T_{\max})$. Then from Lemmas \ref{Uniqueness}, \ref{NeighborhoodDegenerate}
we deduce that $u\left(  \text{\textperiodcentered}\right)  $ is the unique
solution on $[0,T_{\max})$ (and if $T_{\max}<\infty$, it is the unique
solution on $[0,T_{\max}]$).

\begin{remark}
	For the semigroup defined on $O_{\delta}(v_{k}^{\pm})$ we could apply Theorem
	5.1 from \cite{Rybakowski} in order to obtain the existence of an isolating block.
\end{remark}

\section{Conclusions}

In this paper, we have presented an abstract result proving the existence of
isolating blocks for multivalued semiflows. Hence, given an isolated weakly
invariant set defined for a multivalued semiflow satisfying (K1)-(K5), we can
find a special neighborhood for which the boundaries are completely oriented
in some sense. We believe that our construction of isolating blocks for
multivalued semiflows is the first of its kind, so as the application to
differential inclusions.

In the single-valued case, such neighborhood of an isolated weakly invariant
set is essential and gives the inspiration for the definition of Conley's
index. It can be shown that the isolating block together with its boundary has
the cofibration property. In fact, the quotient space defined by the isolating
block over its boundary is the Conley index. This is also true in the context
of metric spaces which are not necessarily locally compact, see \cite[Theorem
5.1]{Rybakowski}.

Having the concept of isolating block a very close relation with Conley's
index, we may wonder if we can define a homology index for multivalued
semiflows. This is a subject for further studies. It is important to say that
there are already very nice and interesting works that propose some
definitions of Conley's index in the multivalued setting, see e.g.
\cite{Mrozek90_CI} and \cite{DzGg_CI_mult_Hilb}. Once we will have a candidate
for the definition of Conley's index, we will try to understand if we are able
to present something new with that definition and which is the relation of
this new concept with the ones proposed in the previous works. We hope to
achieve an answer to these questions in the future.

%

\section*{Acknowledgements}
This work was carried out during a visit of the first author to the Centro de
Investigaci\'{o}n Operativa (CIO), UMH de Elche. She wishes to express her
gratitude to the people from CIO for the warm reception and kindness.

%
%

\bibliographystyle{plain}
\bibliography{biblio}

\begin{thebibliography}{10}

\bibitem{ARV06}
Jos\'{e}~M. Arrieta, An\'{\i}bal Rodr\'{\i}guez-Bernal, and Jos\'{e} Valero.
\newblock Dynamics of a reaction-diffusion equation with a discontinuous
  nonlinearity.
\newblock {\em Internat. J. Bifur. Chaos Appl. Sci. Engrg.}, 16(10):2965--2984,
  2006.

\bibitem{Ball}
John Ball.
\newblock Continuity properties and global attractors of generalized semiflows
  and the navier-stokes equations.
\newblock {\em J. Nonlinear Sci.}, 7:475--502, 1997.

\bibitem{Barbu}
Viorel Barbu.
\newblock {\em Nonlinear semigroups and differential equations in {B}anach
  spaces}.
\newblock Editura Academiei Republicii Socialiste Rom\^{a}nia, Bucharest;
  Noordhoff International Publishing, Leiden, 1976.
\newblock Translated from the Romanian.

\bibitem{BzGaRs_CI_Hilb}
Zbigniew B\l{a}szczyk, Anna Go\l\k{e}biewska, and S\l{a}womir Rybicki.
\newblock Conley index in {H}ilbert spaces versus the generalized topological
  degree.
\newblock {\em Adv. Differential Equations}, 22(11-12):963--982, 2017.

\bibitem{Conley}
Charles Conley.
\newblock {\em Isolated invariant sets and the {M}orse index}, volume~38 of
  {\em CBMS Regional Conference Series in Mathematics}.
\newblock American Mathematical Society, Providence, R.I., 1978.

\bibitem{HenVal}
Henrique~B. da~Costa and Jos\'{e} Valero.
\newblock Morse decompositions and lyapunov functions for dynamically gradient
  multivalued semiflows.
\newblock {\em Nonlinear Dyn.}, 84:19--34, 2016.

\bibitem{DKP}
Sergey Dashkovskyi, Oleksiy Kapustyan, and Yuriy Perestyuk.
\newblock Stability of uniform attractors of impulsive multi-valued semiflows.
\newblock {\em Nonlinear Anal. Hybrid Syst.}, 40, 2021.

\bibitem{DzGg_CI_mult_Hilb}
Zdzis\l{a}w Dzedzej and Grzegorz Gabor.
\newblock On homotopy {C}onley index for multivalued flows in {H}ilbert spaces.
\newblock {\em Topol. Methods Nonlinear Anal.}, 38(1):187--205, 2011.

\bibitem{FeNo}
Eduard Feireisl and John Norbury.
\newblock Some existence, uniqueness and nonuniqueness theorems for solutions
  of parabolic equations with discontinuous nonlinearities.
\newblock {\em Proc. Roy. Soc. Edinburgh Sect. A}, 119(1-2):1--17, 1991.

\bibitem{GIP_CI_Hilb}
K.~G\k{e}ba, M.~Izydorek, and A.~Pruszko.
\newblock The {C}onley index in {H}ilbert spaces and its applications.
\newblock {\em Studia Math.}, 134(3):217--233, 1999.

\bibitem{IzMaSta_CI_Hilb}
Marek Izydorek, Thomas~O. Rot, Maciej Starostka, Marcin Styborski, and Robert
  C. A.~M. Vandervorst.
\newblock Homotopy invariance of the {C}onley index and local {M}orse homology
  in {H}ilbert spaces.
\newblock {\em J. Differential Equations}, 263(11):7162--7186, 2017.

\bibitem{Janig_NonAutCI}
Axel J\"{a}nig.
\newblock Nonautonomous {C}onley index theory. {T}he homology index and
  attractor-repeller decompositions.
\newblock {\em Topol. Methods Nonlinear Anal.}, 53(1):57--77, 2019.

\bibitem{KKV14}
Oleksiy~V. Kapustyan, Pavlo~O. Kasyanov, and Jos\'{e} Valero.
\newblock Structure and regularity of the global attractor of a
  reaction-diffusion equation with non-smooth nonlinear term.
\newblock {\em Discrete Contin. Dyn. Syst.}, 34(10):4155--4182, 2014.

\bibitem{MelnikValero98}
Valery~S. Melnik and José Valero.
\newblock {O}n attractors of multivalued semiflows and differential inclusions.
\newblock {\em Set-Valued Anal.}, 6:83--111, 1998.

\bibitem{Mischaikow95}
Konstantin Mischaikow.
\newblock Global asymptotic dynamics of gradient-like bistable equations.
\newblock {\em SIAM J. Math. Anal.}, 26(5):1199--1224, 1995.

\bibitem{Mrozek90_CI}
Marian Mrozek.
\newblock A cohomological index of {C}onley type for multi-valued admissible
  flows.
\newblock {\em J. Differential Equations}, 84(1):15--51, 1990.

\bibitem{NC}
Gerald~R. North and Robert~F. Cahalan.
\newblock Predictability in a solvable stochastic climate model.
\newblock {\em Siam J. Math. Anal.}, J. Atmos. Sci.:504--513, 1982.

\bibitem{Rybakowski}
Krzysztof~P. Rybakowski.
\newblock {\em The homotopy index and partial differential equations}.
\newblock Universitext. Springer-Verlag, Berlin, 1987.

\bibitem{Terman1}
David Terman.
\newblock A free boundary problem arising from a bistable reaction-diffusion
  equation.
\newblock {\em Siam J. Math. Anal.}, 14(6):1107--1129, 1983.

\bibitem{Terman2}
David Terman.
\newblock A free boundary arising from a model for nerve conduction.
\newblock {\em J. Differential Equations}, 58(3):345--363, 1985.

\bibitem{Tolstonogov}
A.~A. Tolstonogov.
\newblock Solutions of evolution inclusions. {I}.
\newblock {\em Sibirsk. Mat. Zh.}, 33(3):161--174, 221, 1992.

\bibitem{Valero01}
Jos\'{e} Valero.
\newblock Attractors of parabolic equations without uniqueness.
\newblock {\em J. Dynam. Differential Equations}, 13(4):711--744, 2001.

\bibitem{Valero05}
Jos\'{e} Valero.
\newblock On the {K}neser property for some parabolic problems.
\newblock {\em Topology Appl.}, 153(5-6):975--989, 2005.

\bibitem{Valero2021}
Jos\'{e} Valero.
\newblock Characterization of the attractor for nonautonomous
  reaction-diffusion equations with discontinuous nonlinearity.
\newblock {\em J. Differential Equations}, 275:270--308, 2021.

\bibitem{ZKKVZ}
Mikail~Z. Zgurovsky, Kasyanov~Pavlo O., Oleksiy~V. Kapustyan, Jos\'e Valero,
  and Nina~V. Zadoianchuk.
\newblock {\em Evolution Inclusions and variation inequalities for earth data
  processing III}, volume 211.
\newblock Springer-Verlag, Berlin, 2012.

\end{thebibliography}

\end{document}